\newcommand{\supp}{\mathop{\rm supp}\nolimits}
\newcommand{\Ch}{{\rm Ch}}
\newcommand{\rank}{\text{rank}}
\newcommand{\mm}{\mathfrak m}
\newcommand{\sfd}{{\sf d}}
\renewcommand{\d}{{\rm{d}}}
\newcommand{\R}{\mathbb{R}}
\newcommand{\N}{\mathbb{N}}
\newcommand{\be}{{\rm b_1}}
\newcommand{\CD}{{\sf CD}}
\newcommand{\RCD}{{\sf RCD}}
\newcommand{\dZ}{\mathds{Z}}
\newcommand{\cB}{\mathcal{B}}
\newcommand{\cH}{\mathcal{H}}
\newcommand{\cL}{\mathcal{L}}
\newcommand{\cM}{\mathcal{M}}
\newcommand{\cP}{\mathcal{P}}
\newcommand{\cR}{\mathcal{R}}
\newcommand{\vare}{\varepsilon}
\newcommand{\eps}{\varepsilon}
\DeclareMathOperator{\diam}{diam}
\newtheorem{theorem}{Theorem}[section]
\newtheorem{proposition}[theorem]{Proposition}
\newtheorem{lemma}[theorem]{Lemma}
\newtheorem{corollary}[theorem]{Corollary}
\theoremstyle{definition}
\newtheorem{definition}[theorem]{Definition}
\theoremstyle{remark}
\newtheorem{remark}{Remark}[theorem]
\theoremstyle{remark}
\theoremstyle{remark}
\theoremstyle{remark}
\theoremstyle{remark}
\theoremstyle{remark}
\theoremstyle{remark}
\newtheorem*{proposition*}{Proposition}
\begin{document}
\title[Upper bound on the  revised first Betti number and torus stability for $\RCD$ spaces]{An upper bound on the revised first Betti number and a torus stability result for $\RCD$ spaces}
\author{Ilaria Mondello}
\address{Universit\'e Paris Est Cr\'eteil, Laboratoire d'Analyse et Math\'ematiques appliqu\'ees.}
\email{ilaria.mondello@u-pec.fr}
\author{Andrea Mondino}
\address{University of Oxford, Mathematical Institute}
\email{Andrea.Mondino@maths.ox.ac.uk}
\author{Raquel Perales}\address{CONACyT Research Fellow at Universidad Nacional Aut\'onoma de M\'exico, Oaxaca.}
\email{raquel.perales@im.unam.mx}

\date{\today}

\maketitle

\begin{abstract}
We prove an upper bound on the rank of the abelianised revised fundamental group  (called ``revised first Betti number'') of a compact $\RCD^{*}(K,N)$ space, in the same spirit of the celebrated Gromov-Gallot upper bound on the first Betti number for a  smooth compact Riemannian manifold with Ricci curvature bounded below.
When the synthetic lower Ricci bound is close enough to (negative) zero and the aforementioned upper bound on the revised first Betti number is saturated (i.e. equal to the integer part of $N$, denoted by $\lfloor N \rfloor$), then we establish a torus stability result stating that the space is $\lfloor N \rfloor$-rectifiable as a metric measure space, and  a finite cover  must be mGH-close to an $\lfloor N \rfloor$-dimensional flat torus; moreover, in case $N$ is an integer, we prove that the space itself is bi-H\"older homeomorphic to a flat torus. This second result extends to the class of non-smooth $\RCD^{*}(-\delta, N)$ spaces a celebrated torus stability theorem by Colding (later refined by Cheeger-Colding).
\end{abstract}

\tableofcontents

\newpage

\section{Introduction}\label{s:introduction}
Let us start by recalling that an $\RCD^{*}(K,N)$ space is a (possibly non-smooth) metric measure space $(X,\sfd,\mm)$ with dimension bounded above by $N\in [1,\infty)$ and Ricci curvature bounded below by $K\in \R$, in a synthetic sense (see Section \ref{ss-lowerRic} for the precise notions and the corresponding bibliography). The class of $\RCD^{*}(K,N)$ spaces is a natural non-smooth extension of the class of smooth Riemannian manifolds of dimension $\leq N$ and Ricci curvature bounded below by $K\in \R$, indeed:
\begin{itemize}
\item It contains the class of  smooth Riemannian manifolds of dimension $\leq N$ and Ricci curvature bounded below by $K\in \R$;
\item It is closed under pointed measured Gromov-Hausdorff convergence, so Ricci limit spaces are examples of $\RCD^{*}(K,N)$ spaces; 
\item It includes the class of $\lfloor N \rfloor$-dimensional Alexandrov spaces with curvature bounded below by $K/(\lfloor N \rfloor -1)$, the latter  being the synthetic extension of the class of smooth $\lfloor N \rfloor$-dimensional Riemannian manifolds with sectional curvature bounded below by $K/(\lfloor N \rfloor -1)$;
\item In contrast to the class of smooth Riemannian manifolds, it is closed under natural geometric operations such as quotients, foliations, conical and warped product constructions (provided natural assumptions are met);
\item Several fundamental comparison and structural results known for smooth Riemannian manifolds with Ricci curvature bounded below and for Ricci limits have been extended to $\RCD^{*}(K,N)$ spaces. 
\end{itemize}
It was proved by Wei and the second named author \cite{MondinoWei} (after Sormani-Wei \cite{SormaniWei2001, SormaniWei2004}) that an $\RCD^{*}(K,N)$ space $(X,\sfd,\mm)$ admits a universal cover $(\widetilde X,\sfd_{\widetilde X},\mm_{\widetilde X})$, which is an  $\RCD^{*}(K,N)$ space as well. The group of deck transformations on the universal cover  is called \emph{revised fundamental group of $X$} and denoted by $\bar \pi_1 (X)$ (see Section \ref{SSS:CovSpacesT} for the precise definitions and basic properties).
\medskip

We next discuss the main results of the present paper. Let $(X,\sfd,\mm)$ be a compact $\RCD^*(K,N)$ space and let $\bar \pi_1 (X)$ be its revised fundamental group. 
Set  
$$H:= [ \bar \pi_1(X), \bar \pi_1(X)] \quad \text{ and } \quad \Gamma:= \bar \pi_1(X)/ H$$
respectively the commutator and the abelianised revised fundamental group. 
As a consequence of Bishop-Gromov volume comparison, $\Gamma$ is finitely generated (see Proposition \ref{prop:RevFundGFinGen}, after Sormani-Wei \cite{SormaniWei}) and thus it can be written as
$$\Gamma= \mathbb Z^s \times \mathbb Z_{p_1}^{s_1} \times \cdots \times \mathbb Z_{p_l}^{s_l}.$$
 We define the \emph{revised first Betti number} of $(X,\sfd,\mm)$ as
 $$\be(X) : = \rank(\Gamma)=s.$$
\noindent

The goal of the paper is two-fold:
\begin{itemize}
\item  First, we prove an upper bound for the revised first Betti number of a compact  $\RCD^*(K,N)$ space, generalising to the non-smooth metric measure setting   a classical result of  M.~Gromov \cite{Gromov81} and S.~Gallot \cite{Gallot} originally proved for smooth Riemannian manifolds with Ricci curvature bounded below.  
\item Second, we prove a torus stability/almost rigidity result, roughly stating  that if $(X,\sfd,\mm)$ is a compact  $\RCD^*(-\vare,N)$ space with $\be(X)=\lfloor N \rfloor$, then a finite cover must be measured Gromov-Hausdorff close to a flat $\lfloor N \rfloor$-dimensional torus; if moreover $N$ is an integer, then $(X,\sfd)$ is bi-H\"older homeomorphic to a flat $N$-dimensional torus and $\mm$ is a constant multiple of the $N$-dimensional Hausdorff measure. This extends to the non-smooth $\RCD$ setting  a celebrated result by T.~Colding originally established for smooth Riemannian manifolds with Ricci curvature bounded below \cite[Theorem 0.2]{Col97} and later refined by Cheeger-Colding \cite[Theorem A.1.13]{CC97}; this proved an earlier conjecture by M.~Gromov.
\end{itemize}
\noindent
More precisely, the first main result is the following upper bound on $\be(X)$:

\begin{theorem}[An upper bound on $\be(X)$ for $\RCD^{*}(K,N)$ spaces]\label{lem-GalGro}
There exists a positive function $C(N,t)>0$ with $\lim_{t \to 0}C(N,t)=\lfloor N \rfloor$ such that for any compact $\RCD^*(K,N)$ space  $(X,\sfd,\mm)$ with $\supp(\mm)=X$, $\diam(X) \leq D$, for some $K\in \R, N\in [1,\infty), D>0$, the  revised first Betti number satisfies $\be(X) \leq C(N,KD^2)$.
\\In particular,  for any $N \in  [1,\infty)$ there exists $\eps(N)>0$ such that if $(X,\sfd,\mm)$ is a compact  $\RCD^*(K,N)$ space with
$\diam(X) \leq D$, $KD^2 \geq -\eps(N)$  then $\be(X) \leq \lfloor N \rfloor$. 
\end{theorem}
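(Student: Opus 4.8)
The plan is to adapt the classical Gromov–Gallot argument to the $\RCD$ setting, using the universal cover $(\widetilde X, \sfd_{\widetilde X}, \mm_{\widetilde X})$ — which is again $\RCD^*(K,N)$ by \cite{MondinoWei} — and the action of $\bar\pi_1(X)$ on it by deck transformations, which is free, properly discontinuous and by measure-preserving isometries, with $\widetilde X/\bar\pi_1(X) = X$. Since we only care about the abelianised revised fundamental group $\Gamma = \bar\pi_1(X)/H$ and its free rank $s = \be(X)$, I would first pass to the intermediate cover $\widehat X := \widetilde X / H$, which carries a free properly discontinuous action of $\Gamma$ by measure-preserving isometries with $\widehat X/\Gamma = X$, and is still a compact-quotient $\RCD^*(K,N)$ space. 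Fixing a base point $\widehat x \in \widehat X$ over $x\in X$, one gets the orbit $\Gamma\cdot\widehat x$ and the associated length/displacement function $\gamma\mapsto \sfd_{\widehat X}(\widehat x, \gamma\widehat x)$; choosing generators of $\mathbb Z^s\leq\Gamma$ that are "short" in this displacement (a Milnor–Švarc / shortest-generators selection) produces $s$ commuting generators $g_1,\dots,g_s$ each of displacement $\le 2D$ (diameter bound), pairwise "almost orthogonal" in the combinatorial sense that words in them of bounded length remain comparable to their length times the injectivity-type scale.

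The heart of the proof is a volume-comparison estimate on $\widehat X$: the $\mm_{\widehat X}$-volume of a ball $B_{R}(\widehat x)$ grows at least like the number of lattice points of $\mathbb Z^s$ inside it times a fixed small-ball volume, because the balls $B_\rho(\gamma\widehat x)$ for distinct $\gamma$ in a suitable separated subset of $\mathbb Z^s$ are disjoint; on the other hand Bishop–Gromov on the $\RCD^*(K,N)$ space $\widehat X$ forces $\Vol(B_R)/\Vol(B_\rho) \le v_{K,N}(R)/v_{K,N}(\rho)$, the model comparison volume ratio. Combining these, the number of elements of $\mathbb Z^s$ of combinatorial length $\le k$ — which is $\gtrsim k^s$ — must be bounded by $v_{K,N}(Ck)/v_{K,N}(c)$; letting $k\to\infty$ and optimising the choice of scales yields $s \le C(N, KD^2)$ for an explicit $C$ built out of the ratio $v_{K,N}(\cdot)$ of the model space, with $C(N,t)\to\lfloor N\rfloor$ as $t\to 0$ since the flat ($K=0$) model gives polynomial volume growth of degree exactly $\lfloor N\rfloor$ (here one uses that the essential/rectifiable dimension of a nontrivial $\RCD^*(0,N)$ space is at most $\lfloor N\rfloor$, via the structure theory, to pin the exponent down to the integer part). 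The "in particular" clause is then immediate: pick $\eps(N)>0$ small enough that $C(N,t)<\lfloor N\rfloor + 1$ for all $t\ge -\eps(N)$, and since $\be(X)$ is an integer, $\be(X)\le\lfloor N\rfloor$.

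I would organise the steps as follows: (1) reduce to the $\Gamma$-cover $\widehat X$ and record that it is $\RCD^*(K,N)$ with compact quotient of diameter $\le D$; (2) fix $\widehat x$, define the displacement function, and select short commuting generators of the free part $\mathbb Z^s$, establishing that a ball $B_{2kD}(\widehat x)$ contains the orbit of all $\mathbb Z^s$-words of length $\le k$, and extract a $\rho$-separated sub-orbit of cardinality $\gtrsim k^s$ for a fixed $\rho>0$ (depending only on a lower bound on displacements of nontrivial elements — here the properly discontinuous, hence discrete-orbit, action is essential); (3) apply Bishop–Gromov volume comparison on $\widehat X$ between radii $2kD+\rho$ and $\rho$ around $\widehat x$; (4) combine the packing lower bound with the comparison upper bound, take $k\to\infty$, and read off the constraint on $s$; (5) identify the resulting constant $C(N,KD^2)$ and verify $\lim_{t\to0}C(N,t)=\lfloor N\rfloor$, using the $K=0$ model and the $\lfloor N\rfloor$-dimensionality of $\RCD^*(0,N)$ spaces; (6) deduce the "in particular" statement by integrality.

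The main obstacle I anticipate is Step (2), specifically ensuring a \emph{uniform} positive lower bound $\rho>0$ on the displacement of nontrivial deck transformations that is needed to extract the separated sub-orbit — in the smooth case this comes from a positive injectivity radius, which is unavailable here, so one must instead argue via the short-generators normalisation (after conjugating, all the "short" generators are simultaneously controlled) together with compactness of $X$ and properness of $\widehat X$; equivalently, one can avoid an absolute $\rho$ and instead run the Bishop–Gromov estimate at the scale of the shortest generator, letting that scale float and only using ratios, which is the standard Gromov–Gallot trick. Getting $C(N,t)\to\lfloor N\rfloor$ rather than $\to N$ — i.e. recovering the integer part — is the second delicate point and requires invoking the rectifiable-dimension bound for $\RCD$ spaces, since a naive Bishop–Gromov bound only gives the real parameter $N$.
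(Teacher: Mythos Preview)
Your overall strategy is the paper's strategy: pass to the abelianised cover $\bar X=\widetilde X/H$, choose a well-adapted $\mathbb Z^{\be}$-subgroup of $\Gamma$ via a short-generators construction, pack disjoint orbit-balls into a large ball, and invoke Bishop--Gromov on $\bar X$. Two specific points deserve correction.

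\medskip

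\textbf{On obstacle (1).} Your worry about a uniform displacement lower bound is resolved exactly by Gromov's short-generators lemma (Lemma~\ref{lem-Gam'} in the paper), which you do not state precisely. The point is not merely that the generators have displacement $\leq 2D$, but that one can pass to a finite-index subgroup $\Gamma'\cong\mathbb Z^{\be}$ so that \emph{every} nontrivial element of $\Gamma'$ has displacement $> D$ at the chosen basepoint. This simultaneous upper-and-lower control is the content of the lemma (the inductive choice maximising the last coefficient $\ell_{jj}$). Once you have it, the balls $B_{D/2}(\gamma\bar x)$ for $\gamma\in\Gamma'$ are pairwise disjoint and no injectivity-radius argument is needed. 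Your floating-scale remark is the right instinct, but you should articulate the lemma rather than leave it as a heuristic.

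\medskip

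\textbf{On obstacle (2) --- this is a genuine error.} You claim that the limit $\lim_{t\to 0}C(N,t)=\lfloor N\rfloor$ requires the structure theory of $\RCD^*(0,N)$ spaces and their essential dimension. This is wrong, and the route you propose would not work: the Bishop--Gromov comparison function behaves like $r^N$ with the \emph{real} parameter $N$ as $K\to 0$ (it is built from $\sinh^{N-1}$, not from any actual $\lfloor N\rfloor$-dimensional model space), so the volume argument by itself only yields $\be(X)\leq N$. The passage to $\lfloor N\rfloor$ is entirely elementary: $\be(X)$ is an integer, so $\be(X)\leq N$ forces $\be(X)\leq\lfloor N\rfloor$. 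In the paper, the function $C(N,t)$ is then \emph{defined} as the supremum of $\be(X)$ over all admissible spaces; being a finite supremum of integers it is itself an integer, the torus $\mathbb T^{\lfloor N\rfloor}$ witnesses $C(N,t)\geq\lfloor N\rfloor$, and the Bishop--Gromov bound gives $C(N,t)\leq N$ for small $t$, hence $C(N,t)=\lfloor N\rfloor$ there. No rectifiability or essential-dimension input is used or needed for Theorem~\ref{lem-GalGro}.
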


The upper bound of Theorem \ref{lem-GalGro} is sharp, as a flat $\lfloor N \rfloor$-dimensional torus ${\mathbb T}^{\lfloor N \rfloor}$, is an example of an $\RCD^{*}(0,\lfloor N \rfloor)$ space (thus of an $\RCD^{*}(-\vare,  N)$ space for any $\vare>0$) saturating the upper bound $\be({\mathbb T}^{\lfloor N \rfloor})= \lfloor N \rfloor$.
\\

In order to state the second main result,  let us adopt the standard notation $\vare(\delta|N)$ to denote a real valued function of $\delta$ and $N$ satisfying that $\lim_{\delta\to 0} \vare(\delta|N)=0$, for every fixed $N$. Let us also recall that (see Section \ref{SS:StructRCD} for more details and for the relevant bibliography):
\begin{itemize}
\item We say that $(X,\sfd,\mm)$ has essential dimension equal to $N\in \N$ if $\mm$-a.e. $x$ has a unique tangent space isometric to the $N$-dimensional Euclidean space $\R^{N}$;
\item We say that $(X,\sfd,\mm)$ is $N$-rectifiable as a metric measure space for some  $N\in \N$ if there exists a family of Borel subsets $U_{\alpha}\subset X$ and charts $\varphi_{\alpha}:U_{\alpha}\to \R^{N}$ which are bi-Lipschitz on their image such that $\mm(X\setminus \bigcup_{\alpha} U_{\alpha})=0$ and $\mm\llcorner U_{\alpha}\ll {\mathcal H}^{N}\llcorner U_{\alpha}$, where ${\mathcal H}^{N}$ denotes the $N$-dimensional Hausdorff measure.
\end{itemize}

\begin{theorem}[Torus stability for $\RCD^{*}(K,N)$ spaces]\label{thm:main}
For every $N\in [1,\infty)$ there exists $\delta(N) > 0$ with the following property. Let $(X,\sfd, \mm)$ be a compact $\RCD^{*}(K,N)$ space with $K \diam(X)^2 > -\delta(N)$ and $\be(X) =\lfloor N \rfloor$.
\begin{enumerate}
\item Then $(X,\sfd,\mm)$ has essential dimension equal to $\lfloor N \rfloor$ and it is $\lfloor N \rfloor$-rectifiable as a metric measure space.
\item There exists a finite cover $(X', \sfd_{X'}, \mm_{X'})$ of $(X,\sfd, \mm)$ which is $\vare(\delta|N)$-mGH close to a flat torus of dimension $\lfloor N \rfloor$. 
\item  If in addition $N\in \N$, then $\mm= c {\mathcal H}^N $ for some constant $c>0$ and $(X,\sfd)$ is  bi-H\"older homeomorphic to an $N$-dimensional flat torus.
\end{enumerate}
\end{theorem}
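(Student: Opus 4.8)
The plan is to reduce everything to a single structural statement about a canonical abelian cover, and then to combine a compactness argument with the splitting, rectifiability and Reifenberg-type regularity machinery available for $\RCD^{*}(K,N)$ spaces. Since $\be(X)=\lfloor N\rfloor$ is \emph{exactly} the rank of $\Gamma=\bar\pi_1(X)/H\cong\mathbb Z^{\lfloor N\rfloor}\times T$ with $T$ finite, let $\mathcal K\trianglelefteq\bar\pi_1(X)$ be the preimage of $T$ under $\bar\pi_1(X)\to\Gamma$; it is characteristic, so the associated cover $\widehat X:=\widetilde X/\mathcal K\to X$ is normal with deck group $\mathbb Z^{\lfloor N\rfloor}$. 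By \cite{MondinoWei} it is a (non-compact) $\RCD^{*}(K,N)$ space carrying a free, cocompact, properly discontinuous isometric action of $\mathbb Z^{\lfloor N\rfloor}$ with $\widehat X/\mathbb Z^{\lfloor N\rfloor}=X$; after rescaling we may assume $\diam(X)=1$ and $K\ge-\delta(N)$.

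The heart of the argument is the claim: \emph{for every $\eta>0$ there is $\delta(N)>0$ such that, under the above hypotheses, $\widehat X$ is $\eta$-pointed-mGH close to Euclidean $\mathbb R^{\lfloor N\rfloor}$, and there is a finite-index subgroup $\Lambda\le\mathbb Z^{\lfloor N\rfloor}$ whose action on $\widehat X$ is $\eta$-close, in the equivariant sense, to the translation action of a full-rank lattice $L$ on $\mathbb R^{\lfloor N\rfloor}$}. I would prove this by contradiction: along $\delta_j\downarrow0$ take pointed \emph{equivariant} mGH limits $(\widehat X_j,p_j,\mathbb Z^{\lfloor N\rfloor})\to(Z,p_\infty,G_\infty)$, using Gromov precompactness, stability of $\RCD^{*}$ under mGH convergence, and Fukaya--Yamaguchi-type equivariant compactness. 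Then $Z$ is a complete $\RCD^{*}(0,N)$ space and $G_\infty\le\mathrm{Isom}(Z)$ is a closed subgroup (of the Lie group $\mathrm{Isom}(Z)$) acting cocompactly, which is a limit of copies of $\mathbb Z^{\lfloor N\rfloor}$. Proper discontinuity of the approximating actions forbids any bounded orbit in the limit, which rules out a compact positive-dimensional factor of $G_\infty$ and forces $G_\infty$ to contain a cocompact group of \emph{translations}, of ``total rank'' (dimension of the identity component plus rank of the discrete part) equal to $\lfloor N\rfloor$. Applying Gigli's splitting theorem iteratively — first along the $\mathbb R$-directions of $G_\infty$, then along the remaining cocompact $\mathbb Z$-directions — gives $Z=\mathbb R^{\lfloor N\rfloor}\times W$ with $W$ a compact $\RCD^{*}(0,N-\lfloor N\rfloor)$ space; since $N-\lfloor N\rfloor<1$, $W$ is a point, so $Z=\mathbb R^{\lfloor N\rfloor}$. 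Unwinding the equivariant convergence yields the asserted closeness of $\widehat X$ to $\mathbb R^{\lfloor N\rfloor}$, and, at finite $j$, that $\mathbb Z^{\lfloor N\rfloor}$ acts on $\widehat X_j\approx_{\eta_j}\mathbb R^{\lfloor N\rfloor}$ close to isometries generating a crystallographic group, whose (finite-index) translation subgroup $\Lambda_j$ is then close to translations by a genuine full-rank lattice $L_j$.

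Granting the claim, the three conclusions follow. For (2): set $X':=\widehat X/\Lambda$, a finite cover of $X$; by the claim $X'$ is $\vare(\delta|N)$-mGH close to the flat torus $\mathbb R^{\lfloor N\rfloor}/L$. For (1): since $X'\to X$ is a local isometry and $X'$ is $\vare(\delta|N)$-close to an $\lfloor N\rfloor$-dimensional flat manifold, lower semicontinuity of the essential dimension under mGH convergence gives $\dim_{\mathrm{ess}}X=\dim_{\mathrm{ess}}X'\ge\lfloor N\rfloor$; combined with the general bound $\dim_{\mathrm{ess}}\le\lfloor N\rfloor$ (Bruè--Semola) it equals $\lfloor N\rfloor$, and $\lfloor N\rfloor$-rectifiability as a metric measure space is then the structure theory for $\RCD^{*}$ spaces (Mondino--Naber; Gigli--Pasqualetto / De Philippis--Marchese--Rindler / Kell--Mondino). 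For (3): if $N\in\mathbb N$ then $\dim_{\mathrm{ess}}X=N$ makes $X$ weakly non-collapsed, hence non-collapsed with $\mm=c\,\mathcal H^{N}$ (Brué--Pasqualetto--Semola / Honda); moreover $\widehat X$, hence $X$ and $X'$ locally, is close to $\mathbb R^{N}$, so the $\vare$-regularity (intrinsic Reifenberg) theorem for non-collapsed almost-Euclidean $\RCD(-\vare,N)$ spaces (Cheeger--Colding; Kapovitch--Mondino in the $\RCD$ setting), in an equivariant form, shows that $X$ is bi-H\"older homeomorphic to a closed flat manifold $\mathbb R^{N}/\bar\Gamma$ with $\bar\Gamma\cong\bar\pi_1(X)$ a torsion-free crystallographic (Bieberbach) group. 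Finally, since $b_1(\bar\Gamma)=\be(X)=N$, the image of the translation lattice of $\bar\Gamma$ in $\bar\Gamma^{\mathrm{ab}}$ has full rank $N$, so $[\bar\Gamma,\bar\Gamma]$ meets the lattice trivially, hence is finite, hence trivial (as $\bar\Gamma$ is torsion-free); thus $\bar\Gamma\cong\mathbb Z^{N}$ acts by translations and $\mathbb R^{N}/\bar\Gamma$ is a flat torus.

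The main obstacle is the structural claim of the second paragraph, specifically controlling the equivariant limit: ruling out any ``collapse of the rank'' of the $\mathbb Z^{\lfloor N\rfloor}$-action, so that the limit group genuinely has total rank $\lfloor N\rfloor$ and $Z$ genuinely splits off $\mathbb R^{\lfloor N\rfloor}$ (rather than a lower-dimensional Euclidean factor times a positive-dimensional compact space), and organizing the iterated equivariant splitting so that the action limits to a translation action modulo a finite point group. Secondary technical points are the bookkeeping for non-integer $N$ (that the residual compact factor of dimension $<1$ is a point) and, in part (3), the equivariant version of the Reifenberg regularity needed to descend from $X'$ to $X$.
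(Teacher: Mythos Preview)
Your overall architecture parallels the paper's: pass to an abelian cover with deck group a free abelian group of rank $\lfloor N\rfloor$, argue the cover is close to $\R^{\lfloor N\rfloor}$, then use equivariant convergence and Reifenberg-type regularity to descend to a torus. Working with $\widehat X=\bar X/T$ rather than $\bar X$ is cosmetic (cf.\ Remark~\ref{rmk:modTorsion}). There is, however, a genuine gap at the step you yourself flag as the main obstacle.

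Your mechanism for ruling out rank collapse --- ``proper discontinuity of the approximating actions forbids any bounded orbit in the limit, which rules out a compact positive-dimensional factor of $G_\infty$ and forces total rank $\lfloor N\rfloor$'' --- does not do the job. Unboundedness of orbits does not preclude a compact factor in $G_\infty$ (if $G_\infty=\R^{a}\times K$ with $K$ compact, orbits are still unbounded), and even granting $G_\infty$ has no compact factor you have not shown its total rank equals $\lfloor N\rfloor$. Concretely: from cocompactness one gets $Z=\R^{k}\times W$ with $W$ a compact $\RCD^{*}(0,N-k)$ space, but the scenario $k<\lfloor N\rfloor$ with $W$ positive-dimensional is fully consistent with $G_\infty$ being closed, abelian, acting cocompactly with unbounded orbits, and arising as an equivariant limit of copies of $\dZ^{\lfloor N\rfloor}$ (some generators may collapse into a continuous subgroup while $W$ soaks up the missing dimensions). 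What actually kills this is a \emph{volume-growth / ball-counting} argument: the $\dZ^{\lfloor N\rfloor}$-action on $\widehat X_j$ produces $\gtrsim R^{\lfloor N\rfloor}$ disjoint balls of fixed radius inside $B_R$, and this count passes to the limit; but $\R^{k}\times W$ with $W$ compact admits only $O(R^{k})$ such balls, forcing $k\ge\lfloor N\rfloor$ and hence $W=\{\mathrm{pt}\}$. This is exactly the content of the paper's Lemma~\ref{lem-Gam'}, Corollary~\ref{rmk:disjBalls} and Lemma~\ref{lem-diamEst}, and it is where the hypothesis $\be(X)=\lfloor N\rfloor$ is genuinely consumed. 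In the paper this same counting is deployed \emph{before} any equivariant limit, inside an inductive application of the almost splitting theorem (Theorem~\ref{thm-GH}): at stage $k$ one has $\bar X\approx\R^{k}\times Y_k$, and the counting forces $\diam(Y_k)$ large, so $Y_k$ splits one more line. Your compactness route would work too, but only once you plug in this counting lemma; proper discontinuity alone is not a substitute.

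A smaller point on part~(3). Your Bieberbach-style endgame --- a torsion-free crystallographic $\bar\Gamma$ with $b_1(\bar\Gamma)=N$ must be $\dZ^{N}$ --- is correct and is a pleasant alternative to the paper's use of Smith theory. But it presupposes an \emph{equivariant} Reifenberg statement giving $X$ itself (not just the finite cover $X'$) bi-H\"older homeomorphic to a closed flat manifold. The paper avoids this by instead reading off from Corollary~\ref{lem-torus} that $\bar X$ is locally bi-H\"older to $\R^{N}$, hence a topological manifold with the integral homology of a point, and then invoking Smith theory to show the abelian group $\Gamma$ acting freely on it has no $p$-torsion, whence $\Gamma\cong\dZ^{N}$ and $X=\bar X/\Gamma$ is the desired torus.
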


The \emph{torus stability} above should be compared with the \emph{torus rigidity} below, proved by Wei and the second named author \cite{MondinoWei},  extending  to the non-smooth $\RCD^*(0,N)$ setting a classical result of Cheeger-Gromoll \cite{ChGr}. See also Gigli-Rigoni \cite{GiRi} for a related torus rigidity result, where the
maximality assumption on the rank of the revised fundamental group is replaced by the maximality of the rank of harmonic one forms (recall that the rank of the space of harmonic one forms coincides with the first Betti number in the smooth setting).

\begin{theorem}[\cite{MondinoWei} after \cite{ChGr}]\nonumber
Let $(X,\sfd,\mm)$ be a compact $\RCD^*(0,N)$ space for some $N \in [1, \infty)$.  If the revised fundamental group $\bar \pi_1(X)$ contains $\lfloor N \rfloor$ independent generators of infinite order, then $(X,\sfd,\mm)$ is isomorphic as metric measure space  to a flat torus $\mathbb T^{\lfloor N \rfloor}= \R^{\lfloor N \rfloor}/ \Gamma$ for some lattice $\Gamma \subset \R^{\lfloor N \rfloor}$. 
\end{theorem}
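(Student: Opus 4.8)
The natural strategy is a splitting and rigidity argument in the spirit of Cheeger--Gromoll, carried out synthetically. First I would pass to the universal cover: by \cite{MondinoWei} the space $(X,\sfd,\mm)$ admits a universal cover $(\widetilde X,\sfd_{\widetilde X},\mm_{\widetilde X})$ which is again an $\RCD^{*}(0,N)$ space, and the revised fundamental group $G:=\bar\pi_1(X)$ acts on $\widetilde X$ by measure-preserving isometries, freely, properly discontinuously and cocompactly, with $X$ isomorphic as a metric measure space to $\widetilde X/G$. The hypothesis guarantees $\lfloor N\rfloor$ elements of $\bar\pi_1(X)$ of infinite order that are independent in the abelianisation $\Gamma$, i.e.\ $\be(X)=\rank(\Gamma)\ge\lfloor N\rfloor$; hence $\be(X)=\lfloor N\rfloor$ by Theorem \ref{lem-GalGro} (applicable since $K=0$, so $K\diam(X)^2=0$). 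The aim is then to prove that $\widetilde X$ is isometric to $\R^{\lfloor N\rfloor}$, that $\mm_{\widetilde X}=c\,{\mathcal L}^{\lfloor N\rfloor}$ for some constant $c>0$, and that $G$ acts on $\R^{\lfloor N\rfloor}$ by translations; the conclusion then follows at once.

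The main step is to split off the maximal Euclidean factor of $\widetilde X$. An element $\gamma\in G$ of infinite order has unbounded orbits (a bounded orbit would contradict proper discontinuity), and using cocompactness of the $G$-action one extracts from long minimising geodesics a complete line in $\widetilde X$; Gigli's splitting theorem for $\RCD^{*}(0,N)$ spaces then yields an isometric, measure-preserving decomposition $\widetilde X\cong\R\times Y$, $\mm_{\widetilde X}={\mathcal L}^{1}\otimes\mm_{Y}$, with $Y$ an $\RCD^{*}(0,N-1)$ space. Iterating up to the maximal Euclidean factor gives $\widetilde X\cong\R^{k}\times C$ with $\mm_{\widetilde X}={\mathcal L}^{k}\otimes\mm_{C}$, where $(C,\sfd_{C},\mm_{C})$ is an $\RCD^{*}(0,N-k)$ space with no line; moreover, by rigidity of the splitting, every isometry of $\R^{k}\times C$ is a product of an isometry of $\R^{k}$ with one of $C$, so $G$ embeds in $\mathrm{Isom}(\R^{k})\times\mathrm{Isom}(C)$. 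One must now show $k=\lfloor N\rfloor$. Since the essential dimension of $\widetilde X$ is at most $N$, one has $k\le k+\dim C\le N$, and $k\in\mathbb{N}$, so $k\le\lfloor N\rfloor$. For the reverse inequality I would run the Cheeger--Gromoll analysis of the deck group: cocompactness of the $G$-action forces $C$ to be compact and the image of $G$ in $\mathrm{Isom}(\R^{k})$ to be a crystallographic group, so $G$ is virtually $\mathbb{Z}^{k}$ with a finite-index subgroup of translations of $\R^{k}$, whence $\be(X)=\rank(G^{\mathrm{ab}})\le k$; combined with $\be(X)=\lfloor N\rfloor$ this gives $k\ge\lfloor N\rfloor$. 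I expect this to be the principal obstacle: extracting the right number of lines from the purely algebraic hypothesis and controlling the deck-group structure through the splitting — including the familiar subtlety that the axes of commuting infinite-order elements may be ``diagonal'' and must be rectified by passing to suitable combinations of generators — is precisely the point where the abelianised revised fundamental group and the geometry of $\widetilde X$ interact.

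Once $k=\lfloor N\rfloor$, we get $\dim C\le N-\lfloor N\rfloor<1$; but a nontrivial $\RCD$ space has positive (integer, hence $\ge 1$) dimension, so $C$ is a single point. Therefore $\widetilde X$ is isometric to $\R^{\lfloor N\rfloor}$ and $\mm_{\widetilde X}=c\,{\mathcal L}^{\lfloor N\rfloor}$ for some $c>0$.

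Finally I would pin down $G$. It acts on $\R^{\lfloor N\rfloor}$ freely, properly discontinuously and cocompactly by isometries, hence is a crystallographic group; since any finite-order isometry of a Euclidean space has a fixed point (affine maps commute with barycentres) and the action is free, $G$ is torsion-free, i.e.\ a Bieberbach group of rank $\lfloor N\rfloor$. Let $T\trianglelefteq G$ be its translation subgroup — a finite-index lattice $\cong\mathbb{Z}^{\lfloor N\rfloor}$ — and $\Phi:=G/T$ the finite holonomy group, acting faithfully on $T$. For a Bieberbach group $\rank(G^{\mathrm{ab}})$ equals the rank of the $\Phi$-invariant sublattice $T^{\Phi}$; since this equals $\be(X)=\lfloor N\rfloor=\rank(T)$, the action of $\Phi$ on $T\otimes\mathbb{Q}$, hence on $T$, is trivial, and by faithfulness $\Phi=\{1\}$. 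Therefore $G=T\cong\mathbb{Z}^{\lfloor N\rfloor}$ acts on $\R^{\lfloor N\rfloor}$ by translations, and $(X,\sfd,\mm)$ is isomorphic as a metric measure space to $\big(\R^{\lfloor N\rfloor}/\Gamma_{0},\,|\cdot|,\,c\,{\mathcal H}^{\lfloor N\rfloor}\big)$ for the lattice $\Gamma_{0}=T\subset\R^{\lfloor N\rfloor}$, i.e.\ to a flat torus of dimension $\lfloor N\rfloor$.
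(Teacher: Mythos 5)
The paper does not actually prove this statement: it is quoted verbatim from \cite{MondinoWei} as background, so there is no internal proof to compare against. Your argument is essentially the Cheeger--Gromoll strategy that \cite{MondinoWei} adapt to the $\RCD^{*}$ setting (universal cover, iterated use of Gigli's splitting theorem, structure of the deck group), and as a scheme it is correct. Two remarks. First, the detour through $\be(X)=\lfloor N\rfloor$ via Theorem \ref{lem-GalGro} is unnecessary (and was not available to \cite{MondinoWei}): all you use later is $\be(X)\ge\lfloor N\rfloor$, which is exactly the hypothesis once ``independent'' is read as independence of the images in the abelianisation $\Gamma=\bar\pi_1(X)/[\bar\pi_1(X),\bar\pi_1(X)]$; you should state explicitly that this is the reading you adopt, since independence inside the possibly non-abelian $\bar\pi_1(X)$ does not by itself control $\rank(\Gamma)$. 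Second, the two assertions you pass over quickly --- that $C$ is compact and that the image $G^{*}$ of $G$ in $\mathrm{Iso}(\R^{k})$ is discrete --- are precisely where the work lies. Compactness of $C$ follows because a ray in $C$, recentred by deck transformations into a fixed compact set (using cocompactness and the fact that isometries of $\R^{k}\times C$ split as products), yields in the limit a line in $C$, contradicting line-freeness. Discreteness of $G^{*}$ follows because the kernel of $G\to\mathrm{Iso}(\R^{k})$ preserves the now-compact slices $\{a\}\times C$ and is therefore finite by proper discontinuity, while a sequence of distinct elements of $G$ whose $\R^{k}$-components converge to the identity would move a fixed point within a bounded set, again contradicting proper discontinuity. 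With these two points filled in, your concluding Bieberbach computation --- torsion-freeness from the fixed-point property of finite-order Euclidean isometries, $\rank(G^{\mathrm{ab}})=\rank(T^{\Phi})$ from the five-term exact sequence with $\Phi$ finite, and faithfulness of the holonomy on the translation lattice forcing $\Phi=\{1\}$ --- is standard and correct, so the proof closes.
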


\subsection{Outline of the arguments and organisation of the paper}

Our first goal will be to establish the Gromov-Gallot's upper bound on $\be(X)$ stated in Theorem \ref{lem-GalGro}. To that aim:
\begin{itemize}
\item Let $(X,\sfd,\mm)$ be a compact $\RCD^{*}(K,N)$ space.  If $N=1$ then all the results hold trivially (see Remark \ref{rem:N=1}). So we assume that $N\in (1,\infty)$;
\item  Let  $(\widetilde X,\sfd_{\widetilde X},\mm_{\widetilde X})$ be the universal cover of  $(X,\sfd,\mm)$. Recall that $(\widetilde X,\sfd_{\widetilde X},\mm_{\widetilde X})$ is an $\RCD^{*}(K,N)$ space as well, and the revised fundamental group $\bar \pi_{1}(X)$ acts on  $(\widetilde X,\sfd_{\widetilde X},\mm_{\widetilde X})$ by deck transformations (actually $\bar \pi_{1}(X)$ can be identified with the group of deck transformations on $\widetilde X$);
\item Let $H= [\bar \pi_1(X), \bar \pi_1(X)]$ be the commutator of  $\bar \pi_{1}(X)$ and consider the quotient space $\bar X= \widetilde{X} / H$.
$\bar X$ inherits a natural quotient metric measure structure from $\widetilde{X}$,  denoted by $(\bar X,\sfd_{\bar X},\mm_{\bar X})$, which satisfies the $\RCD^{*}(K,N)$ condition as well (see Corollary \ref{lem-barXRCD}). Moreover  $(\bar X,\sfd_{\bar X},\mm_{\bar X})$ is a covering space for $(X,\sfd,\mm)$, with fibres of countable cardinality (corresponding to $\Gamma:=  \bar \pi_1(X)/H$);
\item We will also consider $X':=X/\Gamma'$, where $\Gamma'\cong \mathbb Z^{\be(X)}$ is a suitable subgroup of $\Gamma$. More precisely, fix a point $\bar x\in \bar X$; extending a classical argument of Gromov to the non-smooth $\RCD$ setting,  one can construct $\Gamma'<\Gamma$  isomorphic to $\mathbb Z^{\be(X)}$ such that the distance between $\bar x$ and any element in 
$\Gamma' \bar x$ is bounded above and below uniformly in terms of $\diam(X)$ (see Lemma \ref{lem-Gam'} for the precise statement). The quotient space $(X', \sfd_{X'}, \mm_{X'})$
still satisfies the $\RCD^{*}(K,N)$ condition, it is a covering space for $(X,\sfd,\mm)$, with fibres of finite cardinality (corresponding to the index of $\Gamma'$ in $\Gamma$).  
\end{itemize}
After the above constructions, a counting argument combined with  Bishop-Gromov's volume comparison Theorem in  $(\bar X,\sfd_{\bar X},\mm_{\bar X})$ will give Theorem \ref{lem-GalGro}
at the end of Section \ref{s-betti}.
\\

In order to show Colding's torus stability for $\RCD^{*}(-\delta,N)$ spaces (i.e. Theorem \ref{thm:main}), in Section \ref{s-GHapp} we will construct  $\eps$-mGH approximations from large balls in $\bar X$ to balls of the same radius in the Euclidean space $\R^{\lfloor N \rfloor}$ (see Theorem \ref{thm-GH} for the precise statement). 

This is achieved by an inductive argument with $\lfloor N \rfloor$ steps: in each step we obtain that a ball in $\bar X$ is mGH close to a ball in a product $\R^n \times Y$, where $Y$ is an $\RCD^{*}(0, N -n)$ space. In order to prove the inductive step and pass from $n$ to $n+1$, we show that for $\delta>0$ small enough,  $Y$  must have large diameter, so that the almost splitting theorem  applies to $Y$. Therefore, we get a mGH approximation from a ball in $\bar X$ into $\R^{n+1}\times Y'$. The diameter estimate for $Y$ relies on the volume counting argument described in the previous paragraph and contained in Section \ref{s-betti}. 

The approach above is inspired by Colding's paper \cite{Col97}, however there are some substantial differences: indeed Colding's inductive argument is based on the construction of what are now known as $\delta$-splitting maps, while we only use $\vare$-mGH approximations and the almost splitting theorem; moreover the non-smooth $\RCD^{*}$ setting, in contrast to the smooth Riemannian framework, poses some challenges at the level of regularity, of global/local structure, and of topology. Below we briefly sketch the main lines of arguments; the expert will recognise the differences from \cite{Col97}.

The existence of $\eps$-mGH approximations into the Euclidean space yields the first claim of  Theorem \ref{thm:main}: for $\delta>0$ small enough, $(X,\sfd,\mm)$ has essential dimension equal to $\lfloor N \rfloor$, it is $\lfloor N \rfloor$-rectifiable as a metric measure space and moreover, if $N$ is an integer, the measure coincides with the Hausdorff measure $\cH^N$, up to a positive constant. This will be proved in Theorem \ref{main-delta}  by combining  Theorem \ref{thm-GH} with an $\eps$-regularity result by Naber and the second named author \cite{MondinoNaber}, revisited in the light of the constancy of dimension of $\RCD^{*}(K,N)$ spaces by Bru\'e-Semola \cite{BrueSemola} and a measure-rigidity result by Honda \cite{H19} for non-collapsed $\RCD^{*}(K,N)$ spaces.

When $\{(X_{i}, \sfd_{i}, \mm_{i})\}_{i\in \N}$ is a sequence of spaces as in the assumptions of Theorem \ref{thm:main} with $\delta_{i}\downarrow 0$, Theorem \ref{thm-GH}  yields  pmGH convergence for   $(\bar X_i, \sfd_{\bar X_{i}}, \mm_{\bar X_{i}})$ to the Euclidean space of dimension  $\lfloor N \rfloor$. Then by taking the subgroups  $\mathbb Z^{\lfloor N \rfloor} \cong \Gamma_{i}'<\Gamma_{i}:=\bar \pi_1(X_i)/ H_{i}$ already considered above (i.e. the ones constructed in Lemma \ref{lem-Gam'}, with $k=3$) and using equivariant Gromov-Hausdorff convergence (introduced by Fukaya \cite{Fu}  and further developed by Fukaya-Yamaguchi \cite{FuYa}), we deduce GH-convergence of (a non re-labeled subsequence of) $X_{i}':=\bar{X}_{i}/\Gamma_{i}'$  to a flat torus  of dimension $\lfloor N \rfloor$. This will show the second claim of Theorem \ref{thm:main} (see Proposition \ref{lem-eqGHconv} for more details).

When $N$ is an integer, the measure  of $X_{i}'$  coincides with $\cH^N$ (up to a constant), thanks to the aforementioned result by Honda \cite{H19}.  This fact  allows to apply Colding's volume convergence for $\RCD$ spaces proved by De Philippis-Gigli \cite{DPhG} and get that the GH-convergence obtained above can be promoted to mGH-convergence of $X_{i}'$ to a flat torus.
A recent result by Kapovitch and the second named author \cite{MondinoKapovitch} (which builds on top of Cheeger-Colding's metric Reifenberg theorem \cite{CC97}) states that for $N \in \N$, if a non-collapsed $\RCD^{*}(K,N)$ space is mGH-close enough to a compact smooth $N$-manifold $M$, then it is bi-H\"older homeomorphic to $M$. This implies that for $\delta>0$ small enough as in Theorem \ref{thm:main}, $X':=\bar{X}/\Gamma'$ is bi-H\"older homeomorphic to a flat torus and thus $\bar{X}$ is locally (on arbitrarily large compact subsets) bi-H\"older homeomorphic to $\R^{N}$.  In order to conclude the proof of the third claim of Theorem  \ref{thm:main}, we show that $\Gamma$ is torsion free, yielding that $\Gamma\cong \mathbb Z^{N}$ and thus $X=\bar{X}/\Gamma$ is  bi-H\"older homeomorphic to a flat torus. This last step uses the classical Smith's theory of groups of transformations with finite period.
\\

The paper is organised as follows. Section \ref{s:background} is devoted to recall previous results about $\RCD$ spaces, covering spaces and pointed Gromov-Hausdorff convergence (measured and equivariant) that are used in the rest of the paper. In particular, we show that a metric measure space $(X,\sfd,\mm)$ is $\RCD^*(K,N)$ if and only if any of its regular coverings with countable fibre is an $\RCD^*(K,N)$ space as well. This is essential since in our proofs we often use properties of $\RCD^*$ spaces on the coverings $\widetilde X, \bar X$ and $X'$ of $X$. Section \ref{s-betti} contains the proof of the upper bound for the revised first Betti number and its consequences. In Section \ref{s-GHapp}, we construct by induction $\eps$-mGH approximations between large balls in the covering $\bar X$ and balls in Euclidean space of dimension $\be(X)=\lfloor N \rfloor$. Section \ref{sec:5} is devoted to proving the $\lfloor N \rfloor$ rectifiability, i.e. the first claim of Theorem \ref{thm:main}. In Section \ref{s-proofMain}, we conclude the proof of Theorem \ref{thm:main} by first showing that $X'$ is GH-close to a flat torus  $\mathbb{T}^N$ and then obtaining that, for integer $N$,  $X'$ is bi-H\"older homeomorphic to $\mathbb{T}^N$ and $X=X'$. In the appendix we construct two explicit mGH-approximations that are used in Section \ref{s-GHapp}.

\bigskip

\noindent {\bf Acknowledgments}. 
 I.M. and R.P. wish to thank the Institut Henri Poincar\'e for its hospitality in July 2019 where they met to work on this project. 
\\ A.M. is supported by the European Research Council (ERC), under the European's Union Horizon 2020 research and innovation programme, via the ERC Starting Grant ``CURVATURE'', grant agreement No. 802689.
\\ R.P. wishes to thank the Mexican Math Society and the Kovalevskaya Foundation for the travel support received in November 2018 to visit I.M.  the Summer of 2018.  She also wants to thank the ANR grant:  ANR-17-CE40-0034 ``Curvature bounds and spaces of metrics'' for the support to travel within Europe to 
visit I.M. in July 2019. 
\\The authors thank Daniele Semola for carefully reading a preliminary version of the manuscript and for his comments. 


\section{Background}\label{s:background}

In this section we recall some fundamental notions about convergence of metric measure spaces and  about metric measures spaces with a synthetic lower bound on the Ricci curvature which will be used in the paper.

\subsection{Metric measure spaces and pointed metric measure spaces}

A \emph{metric measure space} (m.m.s. for short) is a triple $(X,\sfd,\mm)$ where $(X,\sfd)$ is a complete and separable metric space and  $\mm$ is a locally finite non-negative complete Borel measure on $X$, with  $X=\supp(\mm)$ and $\mm(X)>0$.
\\ A \emph{pointed metric measure space}  (p.m.m.s. for short) is a quadruple  $(X, \sfd,  \mm, \bar x)$ where $(X,\sfd,\mm)$ is a m.m.s. and $\bar x \in X$ is a given reference point.
\\  Two p.m.m.s.  $(X,\sfd,\mm, \bar x)$ and $(X',\sfd',\mm', \bar x')$ are said to be \emph{isomorphic} if there exists an isometry 
$$\varphi : (X,\sfd)\to(X',\sfd') \text{ such that } \varphi _\sharp\mm=\mm'  \text{ and } \varphi(\bar{x})=\bar{x}'.$$
Recall that $(X,\sfd)$ is said to be
\begin{itemize}
\item \emph{proper} if closed bounded sets are compact;
\item \emph{geodesic} if for every pair of points $x,y\in X$ there exists a length minimising geodesic from $x$ to $y$;
\end{itemize}
As we will recall later in this section,  the synthetic Ricci curvature lower bounds used in the paper (i.e. $\CD^{*}(K,N)$ for some $K\in \R,\, N\in [1,\infty)$) imply that $(X,\sfd)$ is proper and geodesic (see Remark \ref{rem:ProperGeod}).
 
\subsection{Gromov-Hausdorff convergence}\label{ss-conv}
 
We first define pointed measured Gromov-Hausdorff (pmGH)  convergence of p.m.m.s.  which will be used in Section 
\ref{s-GHapp}.   For details, see \cite{BuragoBuragoIvanov}, \cite{GMS2013} and  \cite{Villani09}.  Then we define equivariant pointed Gromov-Hausdorff  (EpGH)
convergence and state some results by Fukaya and Fukaya-Yamaguchi which will be employed in Section \ref{s-proofMain}. For  details see \cite{Fu}, \cite{FuYa}. 

\begin{definition}[Definition of pmGH convergence via pmGH approximations]
Let $(X_n,\sfd_n,\mm_n,\bar x_n)$, $n\in\N\cup\{\infty\}$, be a sequence of p.m.m.s. We say that  $(X_n,\sfd_n,\mm_n,\bar x_n)$ converges to 
$(X_\infty,\sfd_\infty,\mm_\infty,\bar x_\infty)$ in the pmGH sense if for any $\vare,R>0$ there exists $N({\vare,R})\in \N$ such that, for each $n\geq N({\vare,R})$, there exists a  Borel map $f^{R,\vare}_n:B_R(\bar x_n)\to X_\infty$ satisfying:
\begin{itemize}
\item $f^{R,\vare}_n(\bar x_n)=\bar x_\infty$;
\item $\sup_{x,y\in B_R(\bar x_n)}|\sfd_n(x,y)-\sfd_\infty(f^{R,\vare}_n(x),f^{R,\vare}_n(y))|\leq\vare$;
\item the $\vare$-neighbourhood of $f^{R,\vare}_n(B_R(\bar x_n))$ contains $B_{R-\vare}(\bar x_\infty)$,
\item $(f^{R,\vare}_n)_\sharp(\mm_n\llcorner{B_R(\bar x_n)})$ weakly converges to $\mm_\infty\llcorner{B_R(x_\infty)}$ as $n\to\infty$, for a.e. $R>0$. 
\end{itemize}
The maps $f^{R,\vare}_n:B_R(\bar x_n)\to X_\infty$ are called  $\varepsilon$-pmGH approximations.
\\ If we do not require the maps $f^{R,\vare}_n$ to be Borel, nor the last item to hold, we say that  the maps $f^{R,\vare}_n$ are  $\vare$-pGH approximations and  that 
the sequence converges in pointed Gromov-Hausdorff (pGH) sense. 
 \end{definition}

We next define equivariant pointed Gromov-Hausdorff  (EpGH) convergence. To this aim,  given a metric space $(X,\sfd)$, we endow its group of isometries $\mbox{Iso}(X)$ with the compact-open topology. In this case,  it is known that the compact-open topology is equivalent to the topology induced by uniform convergence on compact sets (see for example \cite[Theorem 46.8]{Munkres}). When $X$ is proper,  a sequence $(f_n)_{n \in \N}$ of isometries of $X$ converges to $f$ in the compact-open topology if and only if   $(f_n)_{n \in \N}$ converges to $f$ point-wise on $X$. 

\begin{remark}
Given any $x_0 \in X$, denote \begin{equation*}
\sfd_{x_0}(f,g)= \sup  \Big\{ \exp(-  \sfd(x_0,x))\, \underline{\sfd}(f(x),g(x)) \, \big|\,  x \in X \Big\},
\end{equation*}
where $\underline{\sfd}(x,y)=\min\{\sfd(x,y),1\}$.  If $(X,\sfd)$ is proper, one can check that $\sfd_{x_0}$ induces the compact-open topology and that  the group $( \text{Iso}(X), \sfd_{x_0})$ is a proper metric space.

\end{remark}

Let $\cM^p_{eq}$ be the set of quadruples $(X,\sfd, \bar x ,\Gamma)$, where $(X,\sfd, \bar x)$ is a proper pointed metric space and $\Gamma \subset \mbox{Iso}(X)$ is a closed subgroup of isometries. Define the set $\Gamma(r)= \{\gamma \in \Gamma \,|\, \gamma(\bar x) \in B_r(\bar x )\}$. We are now in position to define equivariant pointed Gromov-Hausdorff convergence for elements of $\cM^p_{eq}$. 

\begin{definition}\label{def-eqGH}
Let $(X_n,\sfd_{n},  \bar x_n, \Gamma_n) \in \cM^p_{eq}$, $n=1,2$. An $\varepsilon$-equivariant pGH  approximation is a triple of functions $(f,\phi,\psi)$,
$$f:  B_{\varepsilon^{-1}}(\bar x_1) \to X_2, \quad \phi:  \Gamma_1(\varepsilon^{-1}) \to \Gamma_2, \quad \psi:  \Gamma_2(\varepsilon^{-1}) \to \Gamma_1,$$
that satisfies 
\begin{enumerate}
\item $f(\bar x_1)=\bar x_2$;
\item The $\varepsilon$-neighbourhood of $f(B_{\varepsilon^{-1}}(\bar x_1) )$ contains $B_{\varepsilon^{-1}}(\bar x_2)$;
\item For all $x,y \in B_{\varepsilon^{-1}}(\bar x_1)$
$$|\sfd_{1}(x,y)- \sfd_{2}(f(x),f(y))| < \varepsilon;$$
\item For all $\gamma_1 \in \Gamma_1(\varepsilon^{-1})$ such that $x, \gamma_1x \in B_{\varepsilon^{-1}}(\bar x_1)$, it holds
$$\sfd_{2}(f(\gamma_1x), \phi(\gamma_1)f(x)) < \varepsilon;$$
\item For all $\gamma_2 \in \Gamma_2(\varepsilon^{-1})$ such that $x,  \psi(\gamma_2)x \in B_{\varepsilon^{-1}}(\bar x_1)$, it holds
$$\sfd_{2}(f( \psi(\gamma_2)x), \gamma_2f(x)) < \varepsilon.$$
\end{enumerate}
\end{definition}

Note that we do not assume  $f$ to be continuous, nor $\phi$ and $\psi$ to be homeomorphisms. 

\begin{definition}
A sequence $\{(X_n,\sfd_{n},\bar x_n, \Gamma_n)\}_{n \in \N}$ of spaces in $\cM^p_{eq}$ converges in the equivariant pointed Gromov-Hausdorff (EpGH for short) sense to $(X_\infty,\sfd_\infty,\bar x_\infty, \Gamma_\infty)\in \cM^p_{eq}$ if there exist $\varepsilon_n$-equivariant pGH approximations between $(X_n,\sfd_{n},\bar x_n, \Gamma_n)$ and $(X_\infty,\sfd_\infty,\bar x_\infty, \Gamma_\infty)$ such that $\varepsilon_n \to 0$, as $n\to \infty$.
\end{definition}

\begin{theorem}(Fukaya-Yamaguchi  \cite[Proposition 3.6]{FuYa})\label{thm-GHtoEq}
Let $\{(X_n,\sfd_{n},  \bar x_n, \Gamma_n)\}_{n\in \N}$ be a sequence in $\cM^p_{eq}$ such that $\{(X_n,\sfd_{n},  \bar x_n)\}_{n \in N}$ converges in the pointed Gromov-Hausdorff sense to $(X_\infty,\sfd_\infty,\bar x_\infty)$. Then there exist $\Gamma_\infty$ a closed subgroup of isometries of $X_\infty$ and a subsequence, 
$\{(X_{n_j},\sfd_{{n_j}}, \bar x_{n_j}, \Gamma_{n_j})\}_j \in \cM^p_{eq}$, that converges in equivariant pointed Gromov-Hausdorff sense to $(X_\infty,\sfd_\infty,\bar x_\infty, \Gamma_\infty)\in \cM^p_{eq}$.
\end{theorem}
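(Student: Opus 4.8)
The plan is to reproduce, in the present language, the classical argument of Fukaya \cite{Fu} and Fukaya--Yamaguchi \cite{FuYa}: transport the isometries of $\Gamma_n$ to \emph{approximate} isometries of $X_\infty$ by conjugating with the pGH approximations, and then extract an honest limiting group by an Arzel\`a--Ascoli plus diagonal argument. First I would fix, after passing to a subsequence, radii $R_n\uparrow\infty$, errors $\delta_n\downarrow 0$ and $\delta_n$-pGH approximations $f_n\colon B_{R_n}(\bar x_n)\to X_\infty$ with $f_n(\bar x_n)=\bar x_\infty$ whose image $\delta_n$-covers $B_{R_n-\delta_n}(\bar x_\infty)$, and build a (possibly discontinuous, which Definition~\ref{def-eqGH} allows) quasi-inverse $g_n\colon B_{R_n-\delta_n}(\bar x_\infty)\to B_{R_n}(\bar x_n)$ by choosing $g_n(y)$ with $\sfd_\infty(f_n(g_n(y)),y)\le\delta_n$; then $g_n$ is an $O(\delta_n)$-pGH approximation and $f_n\circ g_n$, $g_n\circ f_n$ are $O(\delta_n)$-close to the identities of their domains. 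For $\gamma\in\Gamma_n(r)$ one has $\gamma(B_\rho(\bar x_n))\subset B_{\rho+r}(\bar x_n)$, so whenever $\rho+r\le R_n-\delta_n$ the conjugate $(f_n)_\ast\gamma:=f_n\circ\gamma\circ g_n$ is a $c\,\delta_n$-isometry of $B_\rho(\bar x_\infty)$ moving $\bar x_\infty$ by at most $r+c\,\delta_n$; crucially $(f_n)_\ast$ is \emph{almost multiplicative}: on fixed balls and for $n$ large, $(f_n)_\ast(\gamma\eta)\approx(f_n)_\ast\gamma\circ(f_n)_\ast\eta$ and $(f_n)_\ast(\gamma^{-1})$ is an approximate inverse of $(f_n)_\ast\gamma$, with errors $O(\delta_n)$, and $\sfd_\infty((f_n)_\ast\gamma(f_n(x)),f_n(\gamma x))=O(\delta_n)$.

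Next I would extract the limit group. Since each $X_n$ is proper, $\overline{\Gamma_n(r)}$ is compact in $(\mathrm{Iso}(X_n),\sfd_{\bar x_n})$ by Arzel\`a--Ascoli, hence separable; I fix countable dense subsets $D_{n,r}=\{\gamma_{n,r,j}\}_{j}\subset\Gamma_n(r)$. For each fixed $(r,j)$ the maps $(f_n)_\ast\gamma_{n,r,j}$ are equicontinuous $c\,\delta_n$-isometries of growing balls with values in fixed compact sets, so Arzel\`a--Ascoli and a diagonal argument over $(r,j)\in\N\times\N$ give a further subsequence along which each $(f_n)_\ast\gamma_{n,r,j}$ converges uniformly on compacta to a genuine isometry $\gamma_\infty^{r,j}$ of $X_\infty$. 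I then set $\Gamma_\infty:=\overline{\{\gamma_\infty^{r,j}\}}\subset\mathrm{Iso}(X_\infty)$. Using almost multiplicativity, density of the $D_{n,r}$ and the bound $\gamma_{n,r,j}\gamma_{n,r',j'}\in\Gamma_n(r+r')$, the set of limits is closed under composition and inversion, so $\Gamma_\infty$ is a \emph{closed subgroup} and $(X_\infty,\sfd_\infty,\bar x_\infty,\Gamma_\infty)\in\cM^p_{eq}$. The same estimates show that $\Gamma_n(r)$ and $\Gamma_\infty(r)$ match up to a slight enlargement of the radius: for every $r,\eta$, if $n$ is large, every element of $\Gamma_\infty(r)$ is $\eta$-approximated on $B_r(\bar x_\infty)$ by $(f_n)_\ast\gamma$ for some $\gamma\in\Gamma_n(r+1)$, and conversely.

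Finally I would assemble the $\varepsilon_n$-equivariant pGH approximations. Choosing $\varepsilon_n\downarrow 0$ slowly enough that all the estimates above hold with error $\le\varepsilon_n$ on $B_{\varepsilon_n^{-1}}$, I keep $f_n$ as above and define $\phi_n\colon\Gamma_n(\varepsilon_n^{-1})\to\Gamma_\infty$ by letting $\phi_n(\gamma)$ be an element of $\Gamma_\infty$ that is $\varepsilon_n$-close to $(f_n)_\ast\gamma$ on $B_{\varepsilon_n^{-1}}(\bar x_\infty)$, and $\psi_n\colon\Gamma_\infty(\varepsilon_n^{-1})\to\Gamma_n$ by letting $\psi_n(\gamma_\infty)$ be an element of $\Gamma_n$ whose pushforward is $\varepsilon_n$-close to $\gamma_\infty$ on that ball (both exist by the previous paragraph). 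Properties (1)--(3) of Definition~\ref{def-eqGH} hold for $f_n$ with error $O(\varepsilon_n)$ straight from the pGH approximation property, while (4) and (5) follow by rewriting $f_n(\gamma x)$ and $\gamma_\infty f_n(x)$ through $(f_n)_\ast$ and using $g_n\circ f_n\approx\mathrm{id}$ together with the defining closeness of $\phi_n$ and $\psi_n$. After one last relabelling of $\varepsilon_n$, the subsequence $\{(X_{n_j},\sfd_{n_j},\bar x_{n_j},\Gamma_{n_j})\}$ converges to $(X_\infty,\sfd_\infty,\bar x_\infty,\Gamma_\infty)$ in the EpGH sense.

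The main obstacle, and the part requiring the most care, is the quantitative bookkeeping in the last two steps: since $g_n$ is only a quasi-inverse of $f_n$, the pushforward $(f_n)_\ast$ is neither isometric nor a homomorphism, so every estimate carries an additive error, and one must ensure that a \emph{single} sequence $\varepsilon_n\to 0$ controls all of them \emph{simultaneously} over the growing radii $\varepsilon_n^{-1}$ and over all of $\Gamma_n(\varepsilon_n^{-1})$ rather than merely the countable nets --- and then verify that the set of limits genuinely closes up to a subgroup, not just a closed symmetric subset. The non-discreteness of the groups $\Gamma_n$ is handled precisely by the compactness of $\overline{\Gamma_n(r)}$, which is why properness is built into the definition of $\cM^p_{eq}$.
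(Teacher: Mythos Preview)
The paper does not prove this theorem; it is quoted from \cite[Proposition 3.6]{FuYa} and used as a black box. There is therefore no ``paper's own proof'' to compare against.

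Your outline reproduces the classical Fukaya--Yamaguchi argument and is correct in spirit. One point deserves a sharper formulation: when you diagonalize over countable dense subsets $D_{n,r}\subset\Gamma_n(r)$ indexed by $(r,j)$, you need that for every $\gamma\in\Gamma_n(r)$ (not just those in $D_{n,r}$) the pushforward $(f_n)_\ast\gamma$ is eventually close to $\Gamma_\infty$; the density constant of $D_{n,r}$ in $\Gamma_n(r)$ is $n$-dependent, so this does not follow immediately from the convergence of the $\gamma_\infty^{r,j}$. The clean fix, which is how Fukaya--Yamaguchi actually proceed, is to replace the countable dense subsets by \emph{finite} $\eta$-nets of $\overline{\Gamma_n(r)}$: properness of $X_n$ and the uniform displacement bound give a uniform upper bound on the cardinality of such nets (depending only on $r$, $\eta$, and the limit space), so the diagonal argument runs over a fixed index set and the approximation of arbitrary $\gamma\in\Gamma_n(r)$ by a net element has error at most $\eta$ uniformly in $n$. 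With this adjustment your construction of $\phi_n,\psi_n$ and the verification of (4)--(5) go through as you describe.
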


For a closed subgroup $\Gamma$ in $\mbox{Iso}(X)$ and  $x\in X$, let $\Gamma x\subset X$ denote the orbit of $x$ under the action of $\Gamma$. The space of orbits is denoted by $X/\Gamma$.
Let
\begin{equation}\label{eq-quotdist}
\sfd_{X/\Gamma}( \Gamma x , \Gamma x')=  \inf \Big\{ \, \sfd_X(z,z')  \, \big|\,  z \in \Gamma x, \, z' \in \Gamma x' \Big\}.
\end{equation}  
It is a standard fact that $\sfd_{X/\Gamma}$ defines  a distance on $X/\Gamma$. Indeed, the equivalence between convergence in compact-open topology and point-wise convergence in $X$ implies that the orbits of $\Gamma$ are closed in $x$. Then consider $\Gamma x \neq \Gamma x'$ and assume by contradiction that $\sfd_{X/\Gamma}(\Gamma x, \Gamma x')=0$. Then there exists a sequence of points in $\Gamma x$ converging to a point $y$ in $\Gamma x'$, and since orbits are closed, $y$ belongs to $\Gamma x$ too. Therefore the two orbits coincide, which we assumed not. As a consequence, whenever $\Gamma x \neq \Gamma x'$ we have $\sfd_{X/\Gamma}(\Gamma x, \Gamma x')>0$.

\begin{theorem}(Fukaya  \cite[Theorem 2.1]{Fu})\label{thm-eqGHtoOrb}
Let $\{(X_n,\sfd_{n},  \bar x_n, \Gamma_n)\}_{n\in \N}$ be a sequence in $\cM^p_{eq}$ that converges in equivariant pointed Gromov-Hausdorff sense to   $(X_\infty,\sfd_\infty,\bar x_\infty, \Gamma_\infty)\in \cM^p_{eq}$. Then  $\{(X_n/\Gamma_n,\sfd_{X_n/\Gamma_n}, \Gamma_n\cdot \bar x_n)\}_{n \in N}$ converges in the pointed Gromov-Hausdorff sense to $(X_\infty/\Gamma_\infty,\sfd_{X_\infty/\Gamma_\infty}, \Gamma_\infty  \cdot \bar x_\infty)$.
\end{theorem}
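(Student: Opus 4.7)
The plan is to construct, from the given equivariant $\varepsilon_n$-pGH approximations $(f_n,\phi_n,\psi_n)$ with $\varepsilon_n\downarrow 0$, induced $\varepsilon_n'$-pGH approximations $\bar f_n$ between the quotients with $\varepsilon_n'\downarrow 0$. Fix $R>0$ and, for each orbit $[x]\in B_R([\bar x_n])\subset X_n/\Gamma_n$, select once and for all a representative $x\in X_n$ with $\sfd_n(x,\bar x_n)<\sfd_{X_n/\Gamma_n}([x],[\bar x_n])+\delta_n$, where $\delta_n\downarrow 0$. For $n$ large enough this representative satisfies $\sfd_n(x,\bar x_n)<\varepsilon_n^{-1}$, so $f_n(x)$ is well-defined and I would set $\bar f_n([x]):=\Gamma_\infty\cdot f_n(x)\in X_\infty/\Gamma_\infty$. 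Basepoint preservation $\bar f_n([\bar x_n])=[\bar x_\infty]$ is immediate from item (1) of Definition \ref{def-eqGH}.

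Next I would verify the approximate isometry. Given $[x],[y]\in B_R([\bar x_n])$ with chosen representatives, let $d_n=\sfd_{X_n/\Gamma_n}([x],[y])\leq 2R+2\delta_n$, and choose $\gamma_1\in\Gamma_n$ with $\sfd_n(x,\gamma_1 y)\leq d_n+\delta_n$. A triangle inequality estimate bounds $\sfd_n(\gamma_1\bar x_n,\bar x_n)$ by a constant depending only on $R$, so $\gamma_1\in\Gamma_n(\varepsilon_n^{-1})$ for $n$ large and $\phi_n(\gamma_1)$ is defined. Combining items (3) and (4) of Definition \ref{def-eqGH},
\begin{equation*}
\sfd_{X_\infty/\Gamma_\infty}(\bar f_n([x]),\bar f_n([y]))\;\leq\;\sfd_\infty(f_n(x),\phi_n(\gamma_1)f_n(y))\;\leq\;\sfd_n(x,\gamma_1 y)+2\varepsilon_n\;\leq\;d_n+\delta_n+2\varepsilon_n.
\end{equation*}
The reverse inequality is symmetric: pick a near-optimizer $\gamma_2\in\Gamma_\infty$ for the quotient distance between $\bar f_n([x])$ and $\bar f_n([y])$; again a triangle argument places $\gamma_2$ in $\Gamma_\infty(\varepsilon_n^{-1})$, so $\psi_n(\gamma_2)$ is defined, and item (5) gives $\sfd_n(x,\psi_n(\gamma_2)y)\leq\sfd_\infty(f_n(x),\gamma_2 f_n(y))+2\varepsilon_n$, whence $\sfd_{X_n/\Gamma_n}([x],[y])\leq\sfd_{X_\infty/\Gamma_\infty}(\bar f_n([x]),\bar f_n([y]))+O(\varepsilon_n+\delta_n)$.

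For the density condition, given $[z]\in B_{R-\varepsilon_n'}([\bar x_\infty])$, choose a representative $z$ with $\sfd_\infty(z,\bar x_\infty)$ close to its quotient distance; item (2) of Definition \ref{def-eqGH} yields $x\in B_{\varepsilon_n^{-1}}(\bar x_n)$ with $\sfd_\infty(f_n(x),z)<\varepsilon_n$. Using item (3) with the pair $(\bar x_n,\bar x_\infty)$ one checks $\sfd_n(x,\bar x_n)<R$, so $[x]\in B_R([\bar x_n])$ as soon as $\varepsilon_n'>2\varepsilon_n$; any $\Gamma_n$-shift needed to pass from $x$ to the pre-chosen representative is absorbed through item (4), giving $\sfd_{X_\infty/\Gamma_\infty}(\bar f_n([x]),[z])<2\varepsilon_n$.

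The main obstacle is the bookkeeping that guarantees near-minimizers of the two quotient infima lie in the bounded subsets $\Gamma_n(\varepsilon_n^{-1})$, $\Gamma_\infty(\varepsilon_n^{-1})$ on which $\phi_n,\psi_n$ are defined, and that the non-canonical choice of representative affects $\bar f_n$ only by a controlled error. Both reduce to the same principle: isometries that move $\bar x_n$ (resp.\ $\bar x_\infty$) by at most $CR$ for an explicit $C=C(R)$ sit in $\Gamma_n(CR)\subset\Gamma_n(\varepsilon_n^{-1})$ (resp.\ $\Gamma_\infty(CR)\subset\Gamma_\infty(\varepsilon_n^{-1})$) once $n$ is large, and two representatives of the same orbit differ by such a $\gamma$, whose image under $\phi_n$ identifies the two candidates for $\bar f_n([x])$ up to a $\Gamma_\infty$-translate, thanks to item (4). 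No measure-theoretic argument is needed, as the conclusion concerns only pGH (not pmGH) convergence.
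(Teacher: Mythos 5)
The paper does not prove this statement: it is quoted verbatim from Fukaya \cite[Theorem 2.1]{Fu} (see also Fukaya--Yamaguchi \cite{FuYa}), so there is no internal proof to compare against. Your argument is the standard one for this result --- descend the equivariant $\varepsilon_n$-approximations $(f_n,\phi_n,\psi_n)$ to maps between quotients via a fixed choice of near-optimal orbit representatives, and convert the quotient infima into statements about single group elements that move the basepoints by $O(R)$ --- and the estimates you give for the distortion and the almost-surjectivity are correct.

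The only point deserving more care is the one you flag yourself: to invoke items (3)--(5) of Definition \ref{def-eqGH} for the element $\psi_n(\gamma_2)$ you must first know that $\psi_n(\gamma_2)y$ (and in particular $\psi_n(\gamma_2)\bar x_n$) lies in $B_{\varepsilon_n^{-1}}(\bar x_n)$, and with the definition as literally written in the paper the only tool for bounding $\sfd_n(\psi_n(\gamma_2)\bar x_n,\bar x_n)$ is item (5) itself, whose hypothesis already contains that bound. This is an artefact of how the hypotheses of items (4)--(5) are phrased rather than a flaw in your strategy; it is resolved either by using the version of the definition in \cite{Fu,FuYa} (where $\phi,\psi$ are required to land in the corresponding bounded sets $\Gamma(\varepsilon^{-1})$, resp.\ the condition is imposed for all $x$ in a ball of radius comparable to $R\ll\varepsilon_n^{-1}$), or by restricting all your estimates to radius $\varepsilon_n^{-1}/C$ so that the membership conditions can be checked before the items are applied. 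With that bookkeeping made explicit, your proof is complete.
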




\subsection{Synthetic  Ricci curvature lower bounds}\label{ss-lowerRic}

We briefly recall here the definition of $\RCD^*$ spaces, and we refer to \cite{Sturm06I, Sturm06II, Lott-Villani09,  BS2010, AGS11, Gigli12, AGMR, EKS, AMS2013}   for more details about synthetic curvature-dimension conditions and calculus on metric measure spaces. There are different ways to define the curvature-dimension condition, that are now  known to be equivalent in the case of infinitesimally Hilbertian m.m.s. (see for example \cite[Theorem 7]{EKS}). We chose to give here only the definitions of the $\CD^*(K,N)$ condition and infinitesimally Hilbertian m.m.s., since this will be the framework of the paper. 
For $\kappa, s\in \R$, we introduce the generalised sine function 
$$\sin_{\kappa}(s)= \begin{cases}
\frac{\sin(\sqrt{\kappa}s)}{\sqrt{\kappa}} & \mbox{ if } \kappa>0 \\
s & \mbox{ if } \kappa=0 \\
\frac{\sinh(\sqrt{-\kappa}s)} {\sqrt{-\kappa}} & \mbox{ if } \kappa< 0. 
\end{cases}\quad 
$$
For $(t,\theta)\in [0,1]\times \R_{+}$ and $\kappa \in \R$,  the distortion coefficients are defined by
$$
\sigma_\kappa^{(t)}(\theta)=
\begin{cases}
\frac{\sin_\kappa(t\theta)}{\sin_\kappa(\theta)} & \mbox{ if }  \kappa \theta^2\neq 0 \mbox{ and } \kappa \theta^2< \pi^2 \\
t & \mbox{ if } \kappa \theta^2=0 \\
+ \infty & \mbox{ if } \kappa \theta^2 \geq \pi^2.
\end{cases}
\quad
$$For a metric space $(X,\sfd)$, let $\cP_2(X)$ be the space of Borel probability measures $\mu$ over $X$ with finite second moment, i.e. satisfying 
$$\int_X \sfd(x_0,x)^2 \,d\mu(x) < \infty,$$
for some (and thus, for every) $x_0 \in X$.
The \emph{$L^2$-Wasserstein distance} between $\mu_0, \mu_1 \in \cP_2(X)$ is defined by
\begin{equation}\label{eq:defW2}
W_2(\mu_0,\mu_1)^2=\inf_{{\rm q}} \int_{X \times X} \sfd(x,y)^2 d{\rm q}(x,y),
\end{equation}
where ${\rm q}$ is a Borel probability measure on $X \times X$ with marginals $\mu_0, \mu_1$. A measure ${\rm q}\in \cP(X^2)$ achieving the minimum in \eqref{eq:defW2} is called an \emph{optimal coupling}.
 The $L^2$-Wasserstein space $(\cP_2(X), W_2)$ is a complete and separable space, provided $(X,\sfd)$ is so. 
Let  $\cP_{2}(X,\sfd,\mm)\subset \cP_2(X)$ denote the subspace of $\mm$-absolutely continuous measures and  $\cP_{\infty}(X,\sfd,\mm)$ the set of measures in $\cP_2(X,\sfd,\mm)$ with bounded support. 

\begin{definition}\label{def:CD*KN}
Let $K \in \R$ and $N \in [1, \infty)$. A metric measure space $(X,\sfd,\mm)$ satisfies the curvature-dimension condition $\CD^*(K,N)$ if and only if for each $\mu_0,\mu_1 \in \cP_{\infty}(X,\sfd,\mm)$, with $\mu_i=\rho_i m$, $i=0,1$, there exists an optimal coupling $q$ and a $W_2$-geodesic $(\mu_t)_{t \in [0,1]}\subset \cP_{\infty}(X,\sfd,\mm)$ between $\mu_0$ and $\mu_1$ such that for all $t \in [0,1]$ and $N' \geq N$
\begin{equation}\label{eq:defCD*KN}
\int_X \rho_t^{-\frac{1}{N'}}d\mu_t \geq \int_{X \times X} \left(\sigma_{K/N'}^{(1-t)}(\sfd(x_0,x_1))\rho_0(x_0)^{-\frac{1}{N'}}+ \sigma_{K/N'}^{(t)}(\sfd(x_0,x_1))\rho_1(x_1)^{-\frac{1}{N'}}\right) d{\rm q}(x_0,x_1).
\end{equation}
\end{definition}

Given a metric measure space $(X,\sfd,\mm)$, the  Sobolev space $W^{1,2}(X,\sfd,\mm)$ is by definition  the space of $L^2(X,\mm)$ functions having finite Cheeger energy, and it is endowed with the natural norm  $\|f\|^2_{W^{1,2}}:=\|f\|^2_{L^2}+2 \Ch(f)$ which makes it a Banach space.   Here, 
 the Cheeger energy is given by the formula
$$\Ch(f):=\frac 1 2 \int_X |Df|_w^2 \, \d \mm,$$
where $|Df|_w$  denotes the weak upper differential of $f$. 
\\The metric measure space $(X,\sfd,\mm)$ is said to be \emph{inifinitesimally Hilbertian} if the Cheeger energy is a quadratic form (i.e. it satisfies the parallelogram identity) or, equivalently, if the Sobolev space $W^{1,2}(X,\sfd,\mm)$ is a Hilbert space.

\begin{definition}\label{def:RCD*}
Let $K \in \R$ and $N \in [1, \infty)$. We say that a metric measure space $(X,\sfd,\mm)$ is an $\RCD^*(K,N)$ space if it is infinitesimally Hilbertian and it satisfies the $\CD^*(K,N)$ condition. 
\end{definition}

\begin{remark}[The case $N=1$]\label{rem:N=1}
If $(X,\sfd,\mm)$ is a compact $\RCD^{*}(K,N)$ space with $N=1$, then by Kitabeppu-Lakzian \cite{KL} we know that $(X,\sfd,\mm)$ is isomorphic either to a point, or a segment, or a circle. Hence, all the statements of this paper will hold trivially. For instance:

\begin{itemize}
\item The revised first Betti number upper bound $\be(X)\leq 1$ holds trivially;
\item The torus stability holds trivially since  $\be(X)= 1$  only if $(X,\sfd,\mm)$  is isomorphic to a circle.
\end{itemize}
Without loss of generality, we will thus assume $N\in (1,\infty)$ throughout the paper to avoid trivial cases. 
\end{remark}

\begin{remark}[Other synthetic notions: $\CD(K,N)$, $\CD_{loc}(K,N)$, $\RCD(K,N)$]
For $K,N\in \R$, $N\geq 1$ one can consider the $\tau$-distortion coefficients
\begin{equation*}
\tau^{(t)}_{K,N}(\theta):= t^{1/N} \sigma^{(t)}_{K/(N-1)}(\theta)^{(N-1)/N}.
\end{equation*}
Replacing the $\sigma$-distortion coefficients with the $\tau$-distortion coefficients in \eqref{eq:defCD*KN}, one obtains the $\CD(K,N)$ condition. Since $\tau^{(t)}_{K,N}(\theta)\geq  \sigma^{(t)}_{K/N}(\theta)$, the $\CD(K,N)$ condition implies  $\CD^*(K,N)$. Conversely, the $\CD^*(K,N)$ condition  implies $\CD(K^*,N)$ for $K^*=K(N-1)/N$, see Proposition 2.5 (ii) in \cite{BS2010}. 
\\Analogously to Definition \ref{def:RCD*}, one can define the class of $\RCD(K,N)$ spaces as those $\CD(K,N)$ spaces which in addition are infinitesimally Hilbertian. It is clear from the above discussion that  $\RCD(K,N)$ implies $\RCD^*(K,N)$, and that $\RCD^*(K,N)$ implies $\RCD(K^*,N)$. An important property of $\RCD^*(K,N)$ spaces is the \emph{essential non-branching} \cite{RajalaSturm}, roughly stating that every $W_2$-geodesic with endpoints in $\cP_2(X,\sfd,\mm)$ is  concentrated on  a set of non-branching geodesics. This has been recently pushed to full non-branching in \cite{Deng}.
\\The local version of $\CD(K,N)$, called $\CD_{loc}(K,N)$, amounts to require that every point $x\in X$ admits a  neighbourhood $U(x)$  such that for
each pair $\mu_0, \mu_1\in \cP_{\infty}(X,\sfd,\mm)$ supported in $U(x)$ there exists a $W_2$-geodesic from $\mu_0$ to $\mu_1$ (not necessarily supported in $U(x)$) satisfying the $\CD(K,N)$ concavity condition. 
For essentially non-branching spaces, it is not hard to see that $\CD^*(K,N)$ is equivalent to $\CD_{loc}(K,N)$. It is much harder to establish the equivalence in turn with $\CD(K,N)$. This was proved for essentially non-branching spaces with finite total measure in \cite{CavallettiMilman}.  In particular it follows that, for spaces of finite total measure, the conditions $\RCD_{loc}(K,N)$, $\RCD(K,N)$ and $\RCD^*(K,N)$ are all equivalent.
\end{remark}

We state here some well-known properties of $\RCD^*(K,N)$ spaces that we are going to use throughout the paper. First of all, we have the following natural scaling properties:  if $(X,\sfd,\mm)$ is  an $\RCD^*(K,N)$ space, then
\begin{equation*}
\begin{split}
\text{for any $c>0$, $(X,\sfd,c\mm)$ is an $\RCD^*(K,N)$ space}, \\
\text{for any $\lambda>0$, $(X,\lambda \sfd,\mm)$ is an $\RCD^*(\lambda^{-2}K,N)$ space}.
\end{split}
\end{equation*}
\noindent

The following sharp Bishop-Gromov volume comparison was proved in \cite{Sturm06II} for $\CD(K,N)$ spaces, then generalised to non-branching $\CD_{loc}(K,N)$ spaces in \cite{CS12}, and to essentially non-branching $\CD_{loc}(K,N)$ spaces  in \cite{CMIMRN}. In particular it holds for  $\RCD^*(K,N)$ spaces. It will be useful in proving the appropriate upper bound for the revised first Betti number $\be(X)$.

\begin{theorem}[Bishop-Gromov volume comparison]\label{thm-BishopGromov}
Let  $K \in \mathbb R$ and $N\in(1,\infty)$. If $K <0$ then for any 
$\RCD^*(K,N)$ space  $(X,\sfd,\mm)$, all $x \in X$ and all $r\leq R$,
\begin{equation*}
\frac{\mm (B_r(x))}{\mm (B_R(x))} \geq \frac{\int_{0}^{r}\sinh^{N-1} (\sqrt{-K/(N-1)}t) \  dt}{\int_{0}^{R}\sinh^{N-1}(\sqrt{-K/(N-1)}t) \  dt} .
\end{equation*}
If $K \geq 0$ then 
\begin{equation*}
\frac{\mm (B_r(x))}{\mm (B_R(x))} \geq \left(\frac{r}{R}\right)^N .
\end{equation*}
\end{theorem}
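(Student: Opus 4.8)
The plan is to derive the volume comparison directly from the curvature-dimension condition, following the scheme of Sturm \cite{Sturm06II} and Cavalletti--Sturm \cite{CS12}. First, since an $\RCD^*(K,N)$ space is essentially non-branching, I would use the known equivalence of $\CD^*(K,N)$ with $\CD_{loc}(K,N)$ (and, in the localized form used below, with $\CD(K,N)$) to work with the interpolation inequality written in terms of the $\tau$-distortion coefficients $\tau^{(t)}_{K,N}(\theta)=t^{1/N}\sigma^{(t)}_{K/(N-1)}(\theta)^{(N-1)/N}$ rather than with the $\sigma_{K/N'}$ of Definition \ref{def:CD*KN}; it is precisely this passage that yields the sharp constant $\sqrt{-K/(N-1)}$. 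It then suffices to treat $K<0$, since $\CD(K,N)\subset\CD(0,N)$ and the same argument run with $K=0$ produces the bound $(r/R)^N$ when $K\ge 0$.

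Next I would establish a Brunn--Minkowski-type inequality with a marked point. Fix $x\in X=\supp\mm$, and for a bounded measurable $A$ with $\mm(A)>0$ let $A_t:=\{\gamma_t:\gamma\text{ a constant-speed geodesic},\ \gamma_0=x,\ \gamma_1\in A\}$ be its $t$-contraction towards $x$. Applying the $\CD(K,N)$ inequality along a $W_2$-geodesic from $\mu_0:=\mm(B_\eps(x))^{-1}\,\mm\llcorner B_\eps(x)$ to $\mu_1:=\mm(A)^{-1}\,\mm\llcorner A$, bounding the left-hand side by Jensen's inequality (concavity of $u\mapsto u^{1-1/N}$ gives $\int\rho_t^{1-1/N}\,\d\mm\le\mm(\supp\mu_t)^{1/N}$), and letting $\eps\downarrow 0$ so that $\rho_0^{-1/N}\to 0$ (here $\mm$ has no atom at $x$, otherwise $X$ is a point and the statement is trivial), I would obtain
\[
\mm(A_t)^{1/N}\ \ge\ \mm(A)^{1/N-1}\int_A\tau^{(t)}_{K,N}\big(\sfd(x,y)\big)\,\d\mm(y).
\]
Specializing to the annular shell $A:=B_R(x)\setminus B_r(x)$ with $0<r<R$, and using that along a geodesic from $x$ one has $\sfd(x,\gamma_t)=t\,\sfd(x,\gamma_1)$ (so $A_t\subset B_{tR}(x)\setminus B_{tr}(x)$) together with the fact that $\theta\mapsto\tau^{(t)}_{K,N}(\theta)$ is non-increasing for $K<0$, this gives, after raising to the $N$-th power,
\[
\mm\big(B_{tR}(x)\setminus B_{tr}(x)\big)\ \ge\ t\left(\frac{\sinh\!\big(\sqrt{-K/(N-1)}\,tR\big)}{\sinh\!\big(\sqrt{-K/(N-1)}\,R\big)}\right)^{\!N-1}\mm\big(B_R(x)\setminus B_r(x)\big).
\]

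To conclude, I would write $g(\rho):=\mm(B_\rho(x))$; after checking that $g$ is locally absolutely continuous (equivalently, $\mm(\partial B_\rho(x))=0$ for every $\rho$), so that $g(\rho)=\int_0^\rho s$ with $s:=g'\ge 0$, letting $r\uparrow R$ in the previous inequality gives $s(tR)\ge\big(\sinh(\sqrt{-K/(N-1)}\,tR)/\sinh(\sqrt{-K/(N-1)}\,R)\big)^{N-1}s(R)$ for all $t\in(0,1)$, i.e.\ $\rho\mapsto s(\rho)/\sinh^{N-1}(\sqrt{-K/(N-1)}\,\rho)$ is non-increasing. The elementary fact that $r\mapsto\int_0^r\phi\psi\,\big/\,\int_0^r\psi$ is non-increasing whenever $\phi$ is non-increasing and $\psi>0$, applied with $\psi(\rho)=\sinh^{N-1}(\sqrt{-K/(N-1)}\,\rho)$, then shows that $r\mapsto\mm(B_r(x))\,\big/\,\int_0^r\sinh^{N-1}(\sqrt{-K/(N-1)}\,\tau)\,\d\tau$ is non-increasing, which is the assertion.

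The hard part is not any individual estimate but the transition from the Riemannian picture — where one simply integrates the Bishop inequality for the Jacobian of $\exp_x$ in geodesic polar coordinates — to the non-smooth setting, where no exponential map is available and the radial structure must be carried by the contraction sets $A_t$; in particular one has to control the measure-theoretic regularity by hand, chiefly the local absolute continuity of $\rho\mapsto\mm(B_\rho(x))$ needed in the last paragraph. A cleaner but heavier alternative replaces the whole argument by the $L^1$-localization (needle decomposition): disintegrating $\mm$ along the transport rays of $\sfd(x,\cdot)$ reduces the statement to the one-dimensional density comparison on each ray, an elementary ODE fact, at the price of invoking the localization machinery (which again uses essential non-branching). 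A subsidiary issue is the bookkeeping mentioned at the outset, needed to upgrade the naive exponent $\sqrt{-K/N}$ visible in Definition \ref{def:CD*KN} to the sharp $\sqrt{-K/(N-1)}$; this is exactly where \cite{CS12} and \cite{CMIMRN} go beyond \cite{Sturm06II}.
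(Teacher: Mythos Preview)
The paper does not prove this theorem. It is stated as a background result, with the sentence preceding the statement attributing it to \cite{Sturm06II} for $\CD(K,N)$ spaces, to \cite{CS12} for non-branching $\CD_{loc}(K,N)$ spaces, and to \cite{CMIMRN} for essentially non-branching $\CD_{loc}(K,N)$ spaces, the latter being the case that covers $\RCD^*(K,N)$. So there is no ``paper's own proof'' to compare against.

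Your outline is essentially a faithful sketch of the argument in those cited references: the Brunn--Minkowski inequality towards a marked point obtained by letting one marginal degenerate to a Dirac mass, specialization to annular shells using monotonicity of $\theta\mapsto\tau^{(t)}_{K,N}(\theta)$ for $K<0$, differentiation in $r\uparrow R$ to get the density monotonicity of $s(\rho)/\sinh^{N-1}(\sqrt{-K/(N-1)}\,\rho)$, and the integral comparison lemma to conclude. You also correctly identify that the sharp constant $\sqrt{-K/(N-1)}$ (rather than $\sqrt{-K/N}$) requires the passage from the $\sigma$-coefficients of $\CD^*(K,N)$ to the $\tau$-coefficients of $\CD(K,N)$, which is precisely the content of \cite{CS12, CMIMRN} in the essentially non-branching setting. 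Two points deserve slightly more care than your sketch suggests: the limit $\eps\downarrow 0$ in the Brunn--Minkowski step requires controlling the support of the interpolant $\mu_t$ as the first marginal collapses (the geodesics from $B_\eps(x)$ to $A$ need not land exactly in $A_t$); and the local absolute continuity of $\rho\mapsto\mm(B_\rho(x))$ is not quite equivalent to $\mm(\partial B_\rho(x))=0$ for all $\rho$ (the latter gives continuity, not absolute continuity), though it does follow from the annular Brunn--Minkowski inequality you already have. Both are handled in the cited sources.
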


\begin{remark}\label{rem:ProperGeod}
The Bishop-Gromov volume comparison implies that $\RCD^*(K,N)$ spaces are locally doubling and thus proper. It is also not hard to check directly from the Definition \ref{def:CD*KN} that $\supp \mm$  (and thus $X$, since we are assuming throughout that $X=\supp \mm$) is a length space. Since a proper length space is geodesic, we have that $\RCD^*(K,N)$ spaces are proper and geodesic. Thus, without loss of generality, we will assume that all the metric spaces in the paper are proper and geodesic.
\end{remark}

The set of $\RCD^*(K,N)$ spaces is compact when endowed with the pointed measured Gromov-Hausdorff topology (\cite{Lott-Villani09, Sturm06I, AGS11, GMS2013, EKS}):

\begin{theorem}[Stability w.r.t. pmGH convergence]\label{thm:stab}
Let $K\in \R$ and $N\in[1,\infty)$ and $C>1$. The set 
$$
\Big\{(X,\sfd,\mm, \bar{x}) \text{ p.m.m.s.  such that $(X,\sfd,\mm)$ is an  $\RCD^*(K,N)$ space and }  \, C^{-1}\leq \mm(B_1(\bar{x}))\leq C \Big\}
$$
endowed with the pmGH topology is compact.
\end{theorem}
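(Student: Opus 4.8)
The plan is to split the statement into a \emph{precompactness} part (every sequence in the family has a pmGH-convergent subsequence) and a \emph{closedness} part (the limit is again an $\RCD^*(K,N)$ space satisfying the mass normalisation), and to obtain the first from Gromov's pointed precompactness theorem and the second from the known stability of the $\CD^*(K,N)$ and of the infinitesimal Hilbertianity conditions under pmGH convergence. For precompactness, I would first use the Bishop--Gromov inequality (Theorem~\ref{thm-BishopGromov}) to record that every $\RCD^*(K,N)$ space is uniformly locally doubling: for each $R>0$ there is $C_D=C_D(K,N,R)$ with $\mm(B_{2r}(x))\le C_D\,\mm(B_r(x))$ for all $x\in X$ and $r\le R$ (cf. Remark~\ref{rem:ProperGeod}). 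Combining this with the normalisation $C^{-1}\le \mm(B_1(\bar x))\le C$ and a further application of Bishop--Gromov yields, for every $R>0$, constants $0<c(R)\le C(R)<\infty$ depending only on $K,N,C,R$ such that $c(R)\le \mm(B_R(\bar x))\le C(R)$. The uniform doubling of the spaces $(X,\sfd)$ makes the family $\{(X,\sfd,\bar x)\}$ precompact for pointed Gromov--Hausdorff convergence; passing to a subsequence I get pGH convergence to a proper geodesic pointed space $(X_\infty,\sfd_\infty,\bar x_\infty)$. Realising the convergence inside a common proper metric space via pGH approximations and invoking Prokhorov's theorem (the uniform bounds $c(R)\le \mm_n(B_R(\bar x_n))\le C(R)$ together with local doubling give tightness on bounded sets), I extract a further subsequence along which $\mm_n$ converges weakly to a locally finite Borel measure $\mm_\infty$ on $X_\infty$; by construction this is pmGH convergence. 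Testing weak convergence against the open balls $B_{1-\eps}(\bar x_\infty)$ and the closed balls $\bar B_{1+\eps}(\bar x_\infty)$ and letting $\eps\downarrow 0$ (all but countably many spheres $\partial B_r(\bar x_\infty)$ being $\mm_\infty$-null) gives $C^{-1}\le \mm_\infty(B_1(\bar x_\infty))\le C$.

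For closedness, it remains to show that $(X_\infty,\sfd_\infty,\mm_\infty)$ is again an $\RCD^*(K,N)$ space. I would first invoke the stability of the $\CD^*(K,N)$ condition of Definition~\ref{def:CD*KN} under pmGH convergence: since the uniform local mass bounds prevent any loss of mass, optimal couplings and $W_2$-geodesics in $X_n$ subconverge to optimal couplings and $W_2$-geodesics in $X_\infty$, and the displacement-convexity inequality \eqref{eq:defCD*KN} passes to the limit by lower semicontinuity of the R\'enyi-type entropy functionals $\mu=\rho\mm\mapsto-\int_X\rho^{1-1/N'}\,\d\mm$ along the weak convergence of the measures, the right-hand side converging by continuity of the distortion coefficients; this is carried out in \cite{Lott-Villani09, Sturm06I, BS2010} (see also \cite{Villani09}). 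I would then invoke the stability, within the class of $\CD^*(K,N)$ spaces, of infinitesimal Hilbertianity: this is part of the stability of the $\RCD^*(K,N)$ (equivalently $\BE(K,N)$) condition under pmGH convergence, established through Mosco convergence of the Cheeger energies and convergence of the associated heat semigroups in \cite{GMS2013} (see also \cite{AGS11, AGMR, EKS, AMS2013}). Combining the two, $(X_\infty,\sfd_\infty,\mm_\infty)$ is $\RCD^*(K,N)$ and satisfies the normalisation, so the displayed family is compact.

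The precompactness step is essentially soft once Bishop--Gromov is in hand; the substance lies in the two stability statements of the closedness part. I expect the delicate point to be the stability of infinitesimal Hilbertianity, which cannot be detected from optimal transport alone and genuinely requires the equivalent Eulerian/differential descriptions of $\RCD^*$ (Bakry--\'Emery, the $\mathrm{EVI}_{K,N}$-gradient flow of the entropy, Mosco convergence of Cheeger energies); for this I would rely on the cited works rather than reprove it here.
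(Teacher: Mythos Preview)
Your proposal is correct and follows the standard route to this compactness result. Note, however, that the paper does not supply its own proof of Theorem~\ref{thm:stab}: it is stated as a known result with attribution to \cite{Lott-Villani09, Sturm06I, AGS11, GMS2013, EKS}, and is used as a black box thereafter. Your sketch is essentially a summary of how those references establish the result---precompactness via Bishop--Gromov doubling and Gromov's theorem, stability of $\CD^*(K,N)$ via lower semicontinuity of the entropy functionals along Wasserstein geodesics, and stability of infinitesimal Hilbertianity via Mosco convergence of Cheeger energies---so there is nothing to compare against in the paper itself.
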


Following the terminology of De Philippis-Gigli \cite{DPhG} (after Cheeger-Colding \cite{CC97}),  recall  that an $\RCD^*(K,N)$ space $(X,\sfd, \mm)$ is said
\begin{itemize}
\item  \emph{non-collapsed} if $\mm=\cH^N$ up to a positive constant;  
\item \emph{weakly non-collapsed} if $\mm\ll \cH^N$.
\end{itemize}
It follows from \cite[Theorem 1.12]{DPhG} that whenever $(X,\sfd,\mm)$ is a weakly non-collapsed $\RCD^{*}(K,N)$ space, $N$ is necessarily an integer. Honda \cite[Corollary 1.3]{H19} proved the following additional property of compact weakly non-collapsed spaces:

\begin{theorem}\label{thm-Honda}
Let $K \in \R$ and $N \in \N$. For any compact weakly non-collapsed  $\RCD^*(K,N)$ space $(X,\sfd,\mm)$, there exists $c>0$ such that $m = c \cH^N$.
\end{theorem}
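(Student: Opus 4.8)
The plan is to reduce the statement to the assertion that the Hausdorff density of $\mm$ is $\mm$-a.e.\ constant, and then to prove that constancy by combining the Bishop--Gromov monotonicity with a maximum principle for a second-order operator acting on the density.

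\emph{Reduction to a density statement.} Since $\mm\ll\cH^N$ and $\mm(X)>0$, the essential dimension of $(X,\sfd,\mm)$ is forced to be $N$: by the structure theory (existence of a well-defined essential dimension, constancy of dimension, rectifiability of the regular set) $\mm$ is concentrated on an $n$-rectifiable regular set $\mathcal R_n$ for some integer $n\le N$, but if $n<N$ then $\mathcal R_n$ is $\cH^N$-negligible, contradicting $\mm\ll\cH^N$. Hence $\mm$ lives on $\mathcal R_N$, and by Radon--Nikodym we may write $\mm=\theta\,\cH^N\llcorner\mathcal R_N$ with $\theta$ Borel, finite and strictly positive $\mm$-a.e. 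Since the singular set $X\setminus\mathcal R_N$ is $\cH^N$-negligible (its strata have Hausdorff dimension strictly below $N$) and $\cH^N(X)<\infty$ by compactness, it suffices to show that $\theta$ equals a positive constant $c$ $\mm$-a.e.; then $\mm=c\,\cH^N$.

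\emph{The lower semicontinuous representative.} By Theorem~\ref{thm-BishopGromov} the map $r\mapsto \mm(B_r(x))/v_{K,N}(r)$ is non-increasing, where $v_{K,N}(r)$ is the volume of the $r$-ball in the $N$-dimensional model space; hence $\bar\theta(x):=\lim_{r\downarrow0}\mm(B_r(x))/(\omega_N r^N)=\sup_{r>0}\mm(B_r(x))/v_{K,N}(r)$ exists for every $x$, is strictly positive (as $x\in\supp\mm$), and is lower semicontinuous, being a supremum of the lower semicontinuous functions $x\mapsto\mm(B_r(x))/v_{K,N}(r)$. A blow-up at $\mm$-a.e.\ point, which is regular with Euclidean tangent $\mathbb R^N$, identifies $\bar\theta$ with $\theta$ $\mm$-a.e. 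Thus the task becomes: prove that the lower semicontinuous function $\bar\theta$ is constant on $X$.

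\emph{Constancy of the density.} By compactness and lower semicontinuity, $\bar\theta$ attains its minimum $c_0>0$ at some $x_0$. One first exploits the local picture at $x_0$: tangent cones of $X$ at $x_0$ are (again weakly non-collapsed) $\RCD^*(0,N)$ metric cones, whose density must, by Bishop--Gromov monotonicity, attain its minimum at the vertex; using the cone structure together with repeated applications of the splitting theorem one reduces the density on a tangent cone to a one-variable problem and finds that it is constant there. The decisive point, however, is to show that $\bar\theta$ cannot have a strict interior minimum on $X$ itself: this is achieved by developing the second-order differential calculus on $\RCD^*(K,N)$ spaces far enough to show that (a suitable power, or the logarithm, of) the Hausdorff density is a weak supersolution of a linear elliptic equation --- equivalently, a solution of Honda's ``new differential operator'', which may alternatively be read off from the short-time asymptotics of the heat kernel relating $\theta(x)$ to $\lim_{t\downarrow0}(4\pi t)^{N/2}p_t(x,x)$ --- after which the strong maximum principle on the compact space $X$ yields $\bar\theta\equiv c_0$. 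I expect this construction of the operator, and the a priori estimates needed to run the maximum principle in the synthetic framework, to be the main obstacle; the measure-theoretic reduction and the Bishop--Gromov step are comparatively routine. With $\theta\equiv c_0$ $\mm$-a.e., the reduction step gives $\mm=c_0\,\cH^N$, which is the claim.
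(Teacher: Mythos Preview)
The paper does not prove this theorem: it is quoted as \cite[Corollary 1.3]{H19} and used as a black box. So there is no ``paper's own proof'' to compare against, only Honda's original argument.

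Your outline is broadly in the right spirit and in fact tracks the architecture of Honda's proof: reduce to constancy of the density $\theta=\d\mm/\d\cH^N$, realise $\theta$ via the Bishop--Gromov monotone quantity, and then identify it with the short-time heat-kernel coefficient $(4\pi t)^{N/2}p_t(x,x)$ (this is the Ambrosio--Honda--Tewodrose asymptotics) so that a maximum/minimum principle can be applied. Two comments, however.

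First, what you have written is a plan, not a proof. You explicitly flag the construction of the relevant operator and the a priori estimates needed to run the maximum principle as ``the main obstacle'', and do not carry it out. That is precisely the substantive content of \cite{H19}: the analytic machinery (pointwise heat-kernel asymptotics in the $\RCD$ setting, regularity of $x\mapsto p_t(x,x)$, and the ``new differential operator'') is what turns the heuristic into a theorem. Without it, the argument stops exactly where the difficulty begins.

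Second, a few of the preliminary steps need more care than you indicate. The claim ``$\cH^N(X)<\infty$ by compactness'' is not immediate: compactness alone does not bound $\cH^N$; one needs the Bishop--Gromov upper density bound together with a lower bound on $\theta$ (which is part of the De~Philippis--Gigli theory) to conclude. Also, if $\bar\theta$ is a \emph{super}solution and attains its \emph{minimum}, you want a strong \emph{minimum} principle; be consistent about sub/super and max/min. Finally, the digression about tangent cones at $x_0$ (density constant on the cone by splitting) is suggestive but does not by itself propagate constancy back to $X$; Honda's argument works directly on $X$ via the heat kernel, not via cones.
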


\subsection{Almost Splitting}

We recall some results from \cite{MondinoNaber} that we will use in the proofs, starting from an Abresh-Gromoll inequality on the excess function.  For a metric measure space $(X,\sfd,\mm)$ we consider two points $p,q$ and define the excess function as:
\begin{equation*} 
e_{p,q}(x):=\sfd(p,x)+\sfd(x,q)-\sfd(p,q). 
\end{equation*}
For radii $0 < r_0 < r_1$, let $A_{r_0,r_1}(\{p,q\})$ be the annulus around $p$ and $q$: 
 $$A_{r_0,r_1}(\{p,q\})=  \{  x \in X \, |\,   r_0 < \sfd(p,x) < r_1 \, \lor  r_0 < \sfd(q,x) < r_1 \,\}.$$
We will use the following estimates, contained in  \cite[Theorem 3.7, Corollary 3.8 and Theorem 3.9]{MondinoNaber}.  

\begin{theorem}\label{thm-ExcessEstV2}
Let $(X,\sfd,\mm)$ be an $\RCD^*(K,N)$ space for some $K \in \R$, $N \in (1,\infty)$,  and let $p,q \in X$ be with $\sfd_{p,q}:=\sfd(p,q) \leq 1$. For any $\varepsilon_0 \in (0,1)$ there exists $\bar r = \bar r(K,N,\varepsilon_0)\in (0,1]$ such that if $x \in A_{\varepsilon_{0} \sfd_{p,q}, 2\sfd_{p,q}}(\{p,q\})$ satisfies $e_{p,q}(x)\leq r^2\, \sfd_{p,q}$ for some $ r\in (0, \bar r]$, then 
\begin{itemize}
\item[(i)] The following integral estimate holds: $$\displaystyle \fint_{B_{r\sfd_{p,q}}(x)}e_{p,q}(y)\, d\mm(y) \leq C(K,N,\varepsilon_0)r^2\, \sfd_{p,q}.$$
\item[(ii)] There exists $\alpha=\alpha(N)\in(0,1)$ such that 
\begin{equation}\label{AbGr2}
\sup_{y \in B_{r\sfd_{p,q}}(x)} e_{p,q}(y) \leq C(K,N,\varepsilon_0)r^{1+\alpha}\sfd_{p,q}.
\end{equation}
\item[(iii)] If moreover $x$ is such that the ball $B_{2r\sfd_{p,q}}(x)$ is contained in the annulus $A_{\varepsilon_0 \sfd_{p,q}, 2\sfd_{p,q}}(\{p,q\})$, then there exists $\alpha=\alpha(N)\in (0,1)$ such that 
\begin{equation}\label{gradientEst2}
\fint_{B_{r\sfd_{p,q}}(x)} |De_{p,q}|^2d\mm \leq C(K,N,\varepsilon_0)r^{1+\alpha}. 
\end{equation}
\end{itemize}
\end{theorem}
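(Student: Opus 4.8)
\textbf{Proof plan for Theorem~\ref{thm-ExcessEstV2} (the Abresch--Gromoll-type excess estimates).}

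The starting point is the observation (already standard in the smooth and Ricci-limit settings) that the excess function $e_{p,q}$ is a nonnegative function which vanishes on a minimizing geodesic from $p$ to $q$, and which is a sub/superharmonic-type object away from $\{p,q\}$. Indeed, writing $b_p(x)=\sfd(p,x)$ and $b_q(x)=\sfd(q,x)$, one has $e_{p,q}=b_p+b_q-\sfd(p,q)$, and the key analytic input is the Laplacian comparison for the distance function on $\RCD^*(K,N)$ spaces (in the sense of measure-valued Laplacians, as developed by Gigli and by Cavalletti--Mondino): on the annulus $A_{\varepsilon_0\sfd_{p,q},2\sfd_{p,q}}(\{p,q\})$ one has $\Delta b_p \le (N-1)\,\cot_{K/(N-1)}(b_p)\,$ in the distributional sense, and similarly for $b_q$, so that $\Delta e_{p,q}\le C(K,N,\varepsilon_0)$ there. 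Thus $e_{p,q}$ is a nonnegative function with bounded-above Laplacian; I would combine this with the quantitative maximum principle / De Giorgi--Nash--Moser machinery valid on $\RCD$ spaces (Poincar\'e inequality plus volume doubling from Bishop--Gromov, Theorem~\ref{thm-BishopGromov}) to control its size and its gradient on small balls.

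The concrete strategy, after rescaling so that $\sfd_{p,q}=1$ (allowed by the scaling properties of $\RCD^*$ recalled above; this changes $K$ to $K\sfd_{p,q}^2$, hence the dependence of $\bar r$ and $C$ on $K$ but on nothing else through this mechanism), is as follows. \emph{Step 1 (integral estimate (i)):} pick $x$ in the annulus with $e_{p,q}(x)\le r^2$; since $e_{p,q}\ge 0$ and $\Delta e_{p,q}\le C$ on the larger ball $B_{2r}(x)$ (which is still inside the annulus up to shrinking $\bar r$, using that $x$ is at definite distance from $p$ and $q$), apply a mean-value / Moser-type inequality: a nonnegative subsolution-up-to-a-bounded-error satisfies $\fint_{B_r(x)} e_{p,q}\,d\mm \le C\big(\inf_{B_{2r}(x)} e_{p,q} + r^2\big)\le C(e_{p,q}(x)+r^2)\le C r^2$. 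This uses only doubling and a weak Poincar\'e inequality, both available on $\RCD^*(K,N)$ spaces. \emph{Step 2 (pointwise estimate (ii)):} bootstrap (i) to a pointwise bound by iterating the mean-value inequality at dyadic scales, or equivalently by applying De Giorgi--Nash--Moser local boundedness together with a Campanato/Morrey-type decay of the mean oscillation of $e_{p,q}$; the exponent $\alpha=\alpha(N)$ comes out of the Moser iteration (it is essentially a De Giorgi H\"older exponent depending only on the doubling and Poincar\'e constants, which on an $\RCD^*(K,N)$ space depend only on $N$ once we are at unit scale). This yields $\sup_{B_r(x)} e_{p,q}\le C r^{1+\alpha}$. \emph{Step 3 (gradient estimate (iii)):} under the extra hypothesis that $B_{2r}(x)\subset A_{\varepsilon_0,2}(\{p,q\})$, use the Caccioppoli/energy inequality: multiply $\Delta e_{p,q}\le C$ by a cutoff $\eta^2 e_{p,q}$ and integrate by parts to get $\fint_{B_r(x)}|De_{p,q}|^2\,d\mm \le C r^{-2}\fint_{B_{2r}(x)} e_{p,q}^2\,d\mm + C\fint_{B_{2r}(x)} e_{p,q}\,d\mm$; then plug in the pointwise bound from Step~2, $e_{p,q}\le C r^{1+\alpha}$ on $B_{2r}(x)$, to conclude $\fint_{B_r(x)}|De_{p,q}|^2\,d\mm \le C r^{1+\alpha}$ (possibly after relabeling $\alpha$).

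The main obstacle is that the classical proof relies on genuine elliptic regularity for the distance functions, whereas on an $\RCD^*(K,N)$ space $b_p$ is only a function with a \emph{measure-valued} Laplacian whose singular part is nonpositive; one must therefore be careful that all integrations by parts and cutoff arguments are justified for such functions (using the Gigli calculus: $b_p\in D(\mathbf{\Delta})$ locally away from $p$, the chain rule for the Laplacian, and the fact that the singular part only helps since it has the good sign in the maximum principle). A second technical point is tracking exactly how $\bar r$ and the constants depend on the data: the annulus condition $x\in A_{\varepsilon_0\sfd_{p,q},2\sfd_{p,q}}$ guarantees, after rescaling to unit scale, that $B_{2r}(x)$ avoids a fixed neighbourhood of $\{p,q\}$ provided $r\le\bar r(\varepsilon_0)$, which is what makes the Laplacian comparison bound uniform; the curvature enters only through $K\sfd_{p,q}^2$, and since $\sfd_{p,q}\le 1$ this is controlled by $K$ alone, so all constants can be taken to depend only on $K,N,\varepsilon_0$. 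I would also remark that these estimates are exactly \cite[Theorem~3.7, Corollary~3.8, Theorem~3.9]{MondinoNaber}, so for the purposes of the present paper one may simply cite them; the sketch above indicates the proof strategy behind those statements.
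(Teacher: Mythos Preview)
Your proposal is correct and matches the paper's treatment: the paper does not prove this theorem but simply cites it as \cite[Theorem~3.7, Corollary~3.8, Theorem~3.9]{MondinoNaber}, exactly as you note in your final remark. The proof sketch you give (Laplacian comparison for distance functions giving $\Delta e_{p,q}\le C$ on the annulus, then mean-value/weak Harnack for (i), Moser-type iteration for the H\"older-type bound (ii), and a Caccioppoli inequality for (iii)) is indeed the strategy carried out in \cite{MondinoNaber}, so nothing further is needed here.
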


The almost splitting theorem for $\RCD^*$ spaces states that if there exist $k$ points in $(X,\sfd,\mm)$ that are far enough, and whose excess function and derivatives satisfy the appropriate smallness condition, then the space almost splits $k$ Euclidean factors, meaning that $(X,\sfd,\mm)$ is mGH-close to a product $\R^k \times Y$, for an appropriate $\RCD^*$ metric measure space $(Y,\sfd_Y,\mm_Y)$. More precisely, we follow the notation of \cite[Theorem 5.1]{MondinoNaber}, where $p_i+p_j$ denotes a point and $\sfd^p$ is the distance function $\sfd^p(\cdot)=\sfd(p,\cdot)$:

\begin{theorem}\label{thm-AlmSplitV2}
Let $\varepsilon > 0$, $N \in (1, \infty)$ and $\beta > 2$. 
Then there exists $\delta(\varepsilon,N)>0$ with the following property. Assume that, for some $\delta \leq \delta(\varepsilon,N)$, the following holds: 
\begin{itemize}
\item[(i)] $(X,  d, m)$ is an $\RCD^*(-\delta^{2\beta},N)$ space;
\item[(ii)] there exist points $x$, $\{ p_i, q_i, p_i + p_j\}_{1 \leq i<j \leq k}$ in $X$ for some $k \leq N$, such that 
$$\sfd(p_i,  x), \quad \sfd(q_i,x),  \quad \sfd(p_i + p_j,  x) \geq \delta^{-\beta}, \quad \mbox{for } 1 \leq i<j \leq k,$$
and for all $r \in [1, \delta^{-1}]$:
$$
\sum_{i=1}^{k} \sup_{B_{r}({x})} e_{p_{i},q_{i}}+ \sum_{i=1}^k \fint _{B_{r} ({x})} |D e_{p_i,q_i}|^2 \, d\mm +
\sum_{1\leq i< j\leq k} \fint_{B_{r} ({x} )}  \left|D \left( \frac{\sfd^{p_i}+\sfd^{p_j}} {\sqrt 2} -\sfd^{p_i+p_j}\right) \right|^2 \, d\mm   \leq \delta.
$$
\end{itemize}
Then there exists a p.m.m.s. $(Y,\sfd_Y, \mm_Y, {y})$ such that 
\begin{equation*}
\sfd_{mGH}(B^{X}_{\vare^{-1}}(x), B^{\R^k\times Y}_{\vare^{-1}}( (0^k, {y}))) < \vare.
\end{equation*}
More precisely:

\begin{itemize}
\item[1)] if $N-k<1$ then $Y=\{{y}\}$ is a singleton; 

\item[2)] if $N-k\in [1,+\infty)$ then  $(Y,\sfd_Y, \mm_Y)$ is an $\RCD^*(0,N-k)$-space,
there exist maps $u:  X\supset B_{\vare^{-1}}({x}) \to \R^k$ and $v:  X\supset B_{\vare^{-1}}({x})\to Y$, where $u^i=\sfd(p_i, \cdot)-\sfd(p_i,x)$,  such that the product map
 $$(u,v):X \supset B_{\vare^{-1}}({x})\to  \R^k \times Y \quad \text{is a $\vare$-mGH approximation on its image}.$$ 
\end{itemize}
\end{theorem}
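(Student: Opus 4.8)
The plan is to reduce Theorem~\ref{thm-AlmSplitV2} to the known splitting theorem for $\RCD^*(0,N)$ spaces (the Gigli splitting theorem and its measured version) via a compactness/contradiction argument, using the excess estimates of Theorem~\ref{thm-ExcessEstV2} to control the gradient flows associated to the functions $\sfd^{p_i}$. The heart of the matter is that the hypotheses in (ii) are \emph{scale-invariant gradient smallness} conditions on $u^i = \sfd^{p_i} - \sfd^{p_i}(x)$ on a ball of radius $\sim \delta^{-1}$, which is exactly what is needed to extract, in a limit, $k$ functions on a limit space that are harmonic, $1$-Lipschitz, with $|\nabla u^i_\infty| \equiv 1$ and $\langle \nabla u^i_\infty, \nabla u^j_\infty\rangle \equiv 0$; the splitting theorem then produces the $\R^k$ factor, and the local structure theory identifies the complement as an $\RCD^*(0,N-k)$ space $Y$.

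\textbf{Key steps, in order.}
First, I would set up the contradiction: suppose the statement fails for some $\vare>0$, $N$, $\beta$. Then there are sequences $\delta_n \downarrow 0$ and spaces $(X_n,\sfd_n,\mm_n)$ which are $\RCD^*(-\delta_n^{2\beta},N)$, together with points $x_n, \{p_i^n, q_i^n, (p_i+p_j)^n\}$ satisfying the distance lower bounds $\geq \delta_n^{-\beta}$ and the integrated gradient/excess smallness $\leq \delta_n$ on all balls $B_r(x_n)$, $r\in[1,\delta_n^{-1}]$, yet for which no $(Y,\sfd_Y,\mm_Y,y)$ with $\sfd_{mGH}(B^{X_n}_{\vare^{-1}}(x_n), B^{\R^k\times Y}_{\vare^{-1}}((0^k,y)))<\vare$ exists. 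Second, normalize the measures so that $\mm_n(B_1(x_n))=1$ and invoke the pmGH-compactness of $\RCD^*$ spaces (Theorem~\ref{thm:stab}): after passing to a subsequence, $(X_n,\sfd_n,\mm_n,x_n)$ converges in pmGH sense to an $\RCD^*(0,N)$ space $(X_\infty,\sfd_\infty,\mm_\infty,x_\infty)$ (the curvature lower bound $-\delta_n^{2\beta}\to 0$). Since the points $p_i^n$ escape to infinity, the rescaled distance functions $u^i_n := \sfd_n(p_i^n,\cdot) - \sfd_n(p_i^n, x_n)$ are $1$-Lipschitz, vanish at $x_n$, and are uniformly bounded on bounded sets, so by Arzel\`a--Ascoli they converge locally uniformly to $1$-Lipschitz functions $u^i_\infty$ on $X_\infty$ with $u^i_\infty(x_\infty)=0$. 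Third, I would upgrade the convergence to $W^{1,2}_{loc}$/Sobolev sense using the gradient smallness: the hypothesis $\fint_{B_r(x_n)}|De_{p_i^n,q_i^n}|^2\,d\mm_n\leq\delta_n$ together with the Abresch--Gromoll estimate of Theorem~\ref{thm-ExcessEstV2}(iii) forces $|Du^i_n|\to 1$ in $L^2_{loc}$ and the mixed terms $\fint_{B_r(x_n)}|D((\sfd^{p_i}+\sfd^{p_j})/\sqrt 2 - \sfd^{p_i+p_j})|^2\,d\mm_n\leq\delta_n$ forces $\langle \nabla u^i_\infty,\nabla u^j_\infty\rangle = \tfrac12(|\nabla(u^i_\infty+u^j_\infty)|^2 - |\nabla u^i_\infty|^2 - |\nabla u^j_\infty|^2) = 0$ $\mm_\infty$-a.e.; one also checks $\Delta u^i_\infty = 0$ (the distance functions are asymptotically harmonic because $q_i^n$ is also far and the excess is small, via the Laplacian comparison / the argument in \cite{MondinoNaber}). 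Fourth, apply Gigli's splitting theorem $k$ times (or once, to the map $(u^1_\infty,\dots,u^k_\infty)$): $X_\infty$ splits isometrically and measuredly as $\R^k\times Y$ with $(Y,\sfd_Y,\mm_Y)$ an $\RCD^*(0,N-k)$ space when $N-k\geq 1$, and $Y$ a point when $N-k<1$ (since the essential dimension of an $\RCD^*(0,N)$ space is at most $N$, a genuine $\R^k$ factor with $k>N-1$ leaves no room). Fifth, translate back: pmGH convergence gives $\sfd_{mGH}(B^{X_n}_{\vare^{-1}}(x_n), B^{X_\infty}_{\vare^{-1}}(x_\infty))<\vare$ for $n$ large, and $B^{X_\infty}_{\vare^{-1}}(x_\infty)=B^{\R^k\times Y}_{\vare^{-1}}((0^k,y))$, contradicting the choice of $(X_n)$. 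Finally, for the ``more precisely'' part, I would observe that $(u_n,v_n):=$ the pair $(u^1_n,\dots,u^k_n)$ together with the composition $v_n$ of the mGH approximation with projection to $Y$ is, for $n$ large, a $\vare$-mGH approximation onto its image in $\R^k\times Y$; this is automatic from the uniform convergence $u^i_n\to u^i_\infty$ and the fact that $(u^1_\infty,\dots,u^k_\infty)$ is exactly the splitting projection.

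\textbf{Main obstacle.} The delicate step is the third one: passing from the \emph{integrated} smallness of $|De_{p_i,q_i}|^2$ on the fixed ball scales $r\in[1,\delta_n^{-1}]$ to genuine $W^{1,2}_{loc}$-convergence of $u^i_n$ to $u^i_\infty$ with $|\nabla u^i_\infty|\equiv 1$ on the limit, and simultaneously the $L^2$-strong (not merely weak) convergence of the gradients needed to pass the orthogonality relation to the limit. This requires the Mosco-convergence / stability of the Cheeger energies under pmGH convergence of $\RCD^*(K,N)$ spaces and the fact that the maps $u^i_n$ are (approximately) harmonic with controlled Hessian in an integrated sense, so that their gradients are in fact \emph{strongly} $L^2$-precompact — exactly the content encoded in \cite[Theorem 5.1]{MondinoNaber}, whose proof we are essentially replaying. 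A secondary subtlety is verifying that the same $\delta(\vare,N)$ works uniformly in $\beta>2$ (the role of $\beta$ is only to ensure the annulus condition $x\in A_{\vare_0\sfd_{p,q},2\sfd_{p,q}}$ and the domain $[1,\delta^{-1}]$ are compatible as $\delta\to 0$), which is handled by the contradiction scheme since $\beta$ is fixed at the outset.
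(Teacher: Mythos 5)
First, a point of reference: the paper does not prove Theorem \ref{thm-AlmSplitV2} at all — it is stated as background and attributed to \cite[Theorem 5.1]{MondinoNaber}, where it is obtained by a direct, quantitative Cheeger--Colding-style construction (the map $(u,v)$ is built explicitly and the Pythagorean almost-identity is verified via the excess estimates, the segment inequality and maximal-function arguments). Your proposal instead reduces the almost-rigidity to the rigid splitting theorem by compactness/contradiction. That is a genuinely different route and, in outline, a legitimate one: it is softer, produces $\delta(\vare,N)$ non-constructively, and trades the quantitative work for the full strength of Gigli's splitting theorem together with the stability theory under pmGH convergence (Mosco convergence of Cheeger energies, stability of Laplacian comparison). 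Two small points you should make explicit: $k$ may vary along the contradicting sequence, so fix it along a subsequence ($k\leq N$); and the measures $\mm_n$ must be normalised so that Theorem \ref{thm:stab} applies, the constant being absorbed into $\mm_Y$.

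The genuine gap is in your Step 3, and it is not where you locate it. The assertion ``$|Du^i_n|\to 1$ in $L^2_{loc}$'' is vacuous in the stated form: $|Du^i_n|=1$ holds $\mm_n$-a.e.\ for every $n$, since $u^i_n$ is a distance function on a geodesic $\RCD^*$ space. The real problem is that under pmGH convergence the Cheeger energy is only lower semicontinuous, so $|Du^i_\infty|$ could a priori drop strictly below $1$ on a set of positive measure, and the orthogonality relation does not pass to the limit under merely weak $L^2$ convergence of gradients; the hypothesis $\fint|De_{p_i,q_i}|^2\,d\mm\le\delta$ by itself does not repair this. You acknowledge the obstacle but propose to resolve it by ``essentially replaying'' the proof of \cite[Theorem 5.1]{MondinoNaber} — taken literally, that is circular. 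The non-circular way to close the gap is classical: the vanishing excess forces the concatenation of the limit rays toward $p_i^n$ and $q_i^n$ to be a line through $x_\infty$; then $b_i^++b_i^-\ge 0$ is superharmonic (Laplacian comparison passes to the limit) and vanishes at $x_\infty$, so by the strong maximum principle $u^i_\infty=b_i^+=-b_i^-$ is harmonic and coincides with the Busemann function of that line, and Gigli's splitting theorem — applied to the line, not to a function whose gradient norm you have not yet controlled — yields $|\nabla u^i_\infty|\equiv 1$ and identifies $u^i_\infty$ with a splitting coordinate. The mixed $p_i+p_j$ conditions are then needed precisely to guarantee that the $j$-th line survives (projects to a line) in the quotient factor $Y_1$ after splitting off the $i$-th direction; this is the step you pass over when writing ``apply Gigli's splitting theorem $k$ times''. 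As written, the proposal is an accurate road map, but the key limiting identities are asserted rather than proved.
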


Theorem \ref{thm-AlmSplitV2} was proved in \cite{MondinoNaber} by Naber and the second named author, building on top of Gigli's proof of the Splitting Theorem for $\RCD^{*}(0,N)$ spaces \cite{GigliSplitting}, after Cheeger-Gromoll Splitting Theorem \cite{ChGr} and Cheeger-Colding's Almost Splitting Theorem \cite{CC96}.

\subsection{Structure of $\RCD^*(K,N)$ spaces and rectifiability}\label{SS:StructRCD}
\bigskip
We collect here some known results about the structure of $\RCD^*(K,N)$ spaces,  which extended to the $\RCD^*(K,N)$ setting previous work on Ricci limit spaces \cite{Col97, CC97, CC00a, CC00b, CN}. They will be used in order to prove that for $\eps >0$ small enough, a compact $\RCD^*(-\eps,N)$ space $(X,\sfd,\mm)$ with $\be(X)=\lfloor N \rfloor$ and $\diam(X)=1$ is $\lfloor N \rfloor$-rectifiable and  the measure $\mm$ is absolutely continuous with respect to the Hausdorff measure $\mathcal{H}^{\lfloor N\rfloor}$.

We first recall the notion of $k$-rectifiability for metric and metric measure spaces. 

\begin{definition}[$k$-Rectifiability]\label{def:rectifiable}
Let $k \in \N$. A metric measure space $(X,\sfd,\mm)$ is said to be \emph{$(\mm,k)$-rectifiable as a metric space} if there exists a countable collection of Borel subsets $\{A_i\}_{i \in I}$ such that $\mm(X \setminus \bigcup_{i \in I}A_i) =0$ and there exist bi-Lipschitz maps between $A_i$ and Borel subsets of $\R^k$. A metric measure space $(X,\sfd,\mm)$ is said to be \emph{$k$-rectifiable as a metric measure space} if, additionally, the measure $\mm$ is absolutely continuous with respect to the Hausdorff measure $\mathcal{H}^k$.
\end{definition}

We next recall the definitions of tangent space and of $k$-regular set $\cR^k$. 

\begin{definition}\label{def-TgCones}
Let $(X,\sfd,\mm)$ be an $\RCD^*(K,N)$ space for $N \in (1, \infty)$ and $K \in \R$, and let $x \in X$. A metric measure space $(Y,\sfd_Y,\mm_Y, \bar y)$ is a tangent space of $(X,\sfd,\mm)$ at $x$ if there exists a sequence $r_i\in (0, +\infty)$, $r_i \downarrow 0$ such that $(X, r_i^{-1}\sfd, \mm_{r_i}^x, x)$ converges in the pmGH topology to $(Y,\sfd_Y,\mm_Y, \bar y)$, where
$$\mm_{r}^x=\left(\int_{B_r(x)} \left(1-\frac{\sfd(x,y)}{r}\right)d\mm(y) \right)^{-1}m.$$
The set of all tangent spaces of $(X,\sfd,\mm)$ at $x$ is denoted by $\mbox{Tan}(X,\sfd,\mm,x)$. 
\end{definition} 

\begin{definition}
Let $(X,\sfd,\mm)$ be an $\RCD^*(K,N)$ space for $N \in (1, \infty)$ and $K \in \R$. For any $k \in \N$, the $k$-th regular set $\cR_k$ is given by the set of points $x \in X$ such that tangent space at $x$ is unique and equal to the Euclidean space $(\R^k, \sfd_{\R^k}, c_k \cH^k,0^k)$, with
$$c_k=\left( \int_{B_1(0^k)} (1-|y|) \, d\cL^k(y)\right)^{-1}.$$
\end{definition}

In \cite[Theorem 1.1]{MondinoNaber}  it was proved that for any $\RCD^*(K,N)$ space $(X,\sfd,\mm)$, the  $k$-regular sets $\mathcal{R}_k$ for $k=1, \ldots, \lfloor N \rfloor$ are $(\mm,k)$-rectifiable as a metric spaces and  form an essential decomposition of $X$, i.e.  
$$\mm \Big( X \setminus \bigcup_{k=0}^{\lfloor N \rfloor} \mathcal{R}_k\Big)=0.$$

A subsequent refinement by the independent works \cite{MondinoKell, DPhMR, GigliPasqualetto}  showed  that the measure $\mm$ restricted to $\cR^k$ is absolutely continuous with respect to $\cH^k$. Moreover, in \cite{BrueSemola}, E.~Bru\`e and D.~Semola showed that there exists exactly one regular set $\mathcal{R}_k$ having positive measure. It is then possible to define the essential dimension of an $\RCD^*(K,N)$ space as follows. 

\begin{definition}[Essential dimension] Let $K\in \R, N \in(1, \infty)$ and let $(X,\sfd,\mm)$ be an $\RCD^*(K,N)$ space. The \emph{essential dimension} of $X$ is the unique integer $k \in  \{1,..., \lfloor N \rfloor\}$ such that $\mm(\mathcal{R}_k) > 0.$
\end{definition}

Observe that, as a consequence, any $\RCD^*(K,N)$ space of essential dimension equal to $k$ is $k$-rectifiable as a metric measure space. 

We finally state two theorems that will be used in the final part of the paper,  to show that an $\RCD^*(K,N)$ space with $\be(X)=N\in \N$ and $\diam(X)^2K \geq -\eps$ is mGH-close and bi-H\"older homeomorphic to a flat torus $\mathbb{T}^N$. 

\begin{theorem}[{\cite[Theorem 1.2]{DPhG}}]\label{thm-DPhG}
Let $N \in \N$, $N >1$ and let $(X_i, \sfd_i,\cH^N,x_i)$ be a sequence of non-collapsed $\RCD^*(K,N)$ spaces such that $(X_i,\sfd_i,x_i)$ converges to $(X,\sfd,x)$ in the pointed Gromov-Hausdorff sense.
Then one of the following holds.
\begin{itemize}
\item[(i)] If $\limsup_{i}\cH^N(B_1(x_i))>0$, then $\cH^N(B_1(x_i))$ converges to $\cH^N(B_1(x))$ and $(X_i,\sfd_i, \cH^N,x_i)$ converges in the pmGH sense to $(X,\sfd,\cH^N,x)$.
\item[(ii)] If $\lim_{i  \to \infty}\cH^N(B_1(x_i))=0$, then $\dim_{\cH}(X)\leq N-1$. 
\end{itemize} 
\end{theorem} 

In the following statement, we rephrase Theorem 1.10 of \cite{MondinoKapovitch}:

\begin{theorem}[{\cite[Theorem 1.10]{MondinoKapovitch}}]
\label{thm-MKbihold}
Let $(M,g)$ be a compact manifold of dimension $N$ (without boudary). There exists $\varepsilon =\varepsilon(M) >0$ such that the following holds. If $(X,\sfd,\mm,x)$ is a pointed $\RCD^*(K,N)$ space for some $K \in \R$ satisfying $\sfd_{pmGH}(X,M) < \varepsilon$, then $\mm=c_X  {\mathcal{H}^N}$ for some $c_X >0$ and $(X,\sfd)$ is  bi-H\"older homeomorphic to $M$.
\end{theorem}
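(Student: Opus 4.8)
The plan is to show that $(X,\sfd)$ is Reifenberg flat at all sufficiently small scales and then to quote the $\RCD$ version of Cheeger--Colding's metric Reifenberg theorem. Fix the small constant $\tau=\tau(N)>0$ that the latter requires; I will produce $\rho_0=\rho_0(M)>0$ and $\varepsilon=\varepsilon(M)>0$ so that, whenever $\sfd_{pmGH}(X,M)<\varepsilon$,
\begin{equation*}
\sfd_{GH}\big(B_s(y),B_s^{\R^N}(0^N)\big)\le\tau\,s\qquad\text{for every }y\in X\text{ and every }0<s\le\rho_0,
\end{equation*}
and so that the error in this inequality tends to $0$ as $s\downarrow 0$. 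Granting this, \cite[Theorem 1.10]{MondinoKapovitch} (resting on the metric Reifenberg theorem of \cite{CC97}) yields bi-H\"older charts on $X$; since $X$ is globally $\varepsilon$-close to the closed smooth manifold $M$, they can be assembled into a bi-H\"older homeomorphism $X\to M$. The small-scale decay of the error forces the metric tangent cone at every point of $X$ to be unique and isometric to $(\R^N,|\cdot|)$; since any $\RCD^*(0,N)$ metric measure space whose metric is $\R^N$ carries a reference measure proportional to $\mathcal L^N$ (for instance by an $N$-fold use of the $\RCD^*(0,N)$ splitting theorem), the stability Theorem~\ref{thm:stab} identifies the measured tangent at each point with $(\R^N,|\cdot|,c_N\mathcal H^N,0^N)$, i.e. $X=\mathcal R_N$. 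Therefore $\mm\ll\mathcal H^N$, so $(X,\sfd,\mm)$ is weakly non-collapsed, and since $N$ is an integer Honda's Theorem~\ref{thm-Honda} gives $\mm=c_X\mathcal H^N$ for some $c_X>0$.

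To obtain the Reifenberg bound I would split into two ranges of scales. Smoothness of $M$ provides $\rho_0=\rho_0(M,\tau)>0$ and a uniform modulus with $\sfd_{GH}(B_s^M(q),B_s^{\R^N}(0^N))\le\tfrac\tau2\,s$ for all $q\in M$, $s\le\rho_0$; and if $\sfd_{pmGH}(X,M)<\varepsilon$ then each ball $B_s(y)\subset X$ with $s\ge C(N)\varepsilon$ is, scale-invariantly, $\tfrac\tau2 s$-close to a ball of radius $s$ in $M$. Composing, the Reifenberg bound holds at every $s\in[C(N)\varepsilon,\rho_0]$. For $s<C(N)\varepsilon$ I would argue by downward induction on dyadic scales, starting at $\rho:=C(N)\varepsilon$: at scale $\rho$ the rescaled space is $\RCD^*(\rho^2K,N)$ with $\rho^2|K|$ as small as we please once $\varepsilon$ is small (here one uses that $K$ is bounded below by a fixed constant, e.g.\ $K\ge-1$, which holds in the situations where this theorem is used), while $B_\rho(y)$ is $\tfrac\tau2$-close to $B_\rho^{\R^N}(0^N)$.

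The inductive step is the heart of the matter. Pulling back, through the Gromov--Hausdorff approximations, the $2N$ poles $\pm\rho e_i$ and the $\binom N2$ ``parallelogram'' points of $B_\rho^{\R^N}(0^N)$ produces points $\{p_i^\pm,\,p_i+p_j\}$ in $X$, at distance $\asymp\rho$ from a center close to $y$, whose excess functions $e_{p_i^+,p_i^-}$ and parallelogram defects are pointwise small; crucially, the $L^2$-smallness of the \emph{gradients} of these functions --- which does not follow from Gromov--Hausdorff closeness --- is then supplied by the Abresch--Gromoll type estimates of Theorem~\ref{thm-ExcessEstV2}(iii). Feeding this data into the almost splitting Theorem~\ref{thm-AlmSplitV2} with $k=N$, and noting that for $k=N$ the factor $Y$ there is a single point, we get that $B_{c\rho}(y)$ is $\Psi(\tau\,|\,N)$-close to $B_{c\rho}^{\R^N}(0^N)$, for a universal ratio $c=c(N)\in(0,1)$ and a modulus $\Psi(\cdot|N)\to 0$. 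If $\tau$ (hence $\varepsilon$) is small enough then $\Psi(\tau|N)\le\tau$, the curvature parameter only decreases upon zooming in, and one iterates over all dyadic scales below $\rho$ keeping the errors controlled --- in fact forcing them to decay --- which yields both the Reifenberg bound and its small-scale decay.

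The main obstacle is exactly this last step: propagating Euclidean-closeness from the fixed geometric scale $\rho_0(M)$ down to scale $0$, and then converting it into a bi-H\"older homeomorphism with $M$. This is the $\RCD$ analogue of the most delicate results of \cite{CC97} (the almost splitting theorem, the metric Reifenberg theorem), it uses Theorems~\ref{thm-ExcessEstV2} and~\ref{thm-AlmSplitV2} in an essential way, and it requires careful control of the errors accumulated along the scale iteration; the complete argument is carried out in \cite{MondinoKapovitch}, which we invoke.
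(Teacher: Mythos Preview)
This statement is not proved in the paper: it is quoted verbatim as a result from \cite{MondinoKapovitch} and used as a black box (in the proof of Corollary~\ref{lem-torus}). There is therefore no ``paper's own proof'' to compare against; the paper simply cites it.

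Your proposal is a reasonable high-level sketch of how the result is actually obtained in \cite{MondinoKapovitch}: propagate Euclidean closeness from a fixed geometric scale down to all smaller scales via the excess/gradient estimates and the almost splitting theorem, then feed the resulting uniform Reifenberg flatness into the metric Reifenberg theorem. You also correctly identify how the measure claim follows (every point is $N$-regular, hence $\mm\ll\mathcal H^N$, hence Honda's rigidity applies). One point to flag: you hedge by assuming ``$K\ge -1$, which holds in the situations where this theorem is used''; as stated the theorem allows arbitrary $K\in\R$ with $\varepsilon$ depending only on $M$, so you would need to either justify why the argument is uniform in $K$ once $X$ is pmGH-close to the compact $M$, or acknowledge that $\varepsilon$ may depend on a lower bound for $K$ as well. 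In any case, for the purposes of the present paper no proof is expected here.
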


\subsection{Covering spaces, universal cover and revised fundamental group}\label{SSS:CovSpacesT}

We first discuss the definition of covering spaces,  universal cover, revised fundamental group, 
and actions of groups of homeomorphisms over topological spaces. Then we focus on length metric measure spaces and see that the $\RCD^*(K,N)$ condition can be lifted to the total space of an $\RCD^*(K,N)$ base,  when having a covering map.

\subsubsection{Covering spaces}\label{SSS:CovSpacesT}
Let us provide some definitions and results related to coverings spaces from \cite{Spanier, Hatcher}. In particular, we state the notion of a group 
acting properly discontinuously as it appears in these references. Note that sometimes this is defined differently.

We say that a topological space $Y$  is a \emph{covering space} for a topological space $X$ if there exists a continuous map $p_{Y,X}: Y \to X$, called \emph{covering map}, with the property that for every point $x \in X$ there exists a neighbourhood $U\subset X$ of $x$ such that  $p_{Y,X}^{-1}(U)$ is  the disjoint union of open subsets of $Y$ and so that the restriction of $p_{Y,X}$ to each of these subsets is homeomorphic to $U$.  By definition,  the covering map is a local homeomorphism.  Two covering spaces $Y, Y'$ of $X$ are said to be equivalent if there exists an homeomorphism between them, $h: Y  \to Y'$, so that $p_{Y', X} \circ h =  p_{Y,X}$.

If $X$ is path-connected then the cardinality of $p_{Y,X}^{-1}(x)$ does not depend on $x \in X$.  We recall that given a topological space $Z$ and $z \in Z$,  the  \emph{fundamental group}  of $Z$, $\pi_1(Z,z)$, is the group of the equivalence classes under based homotopy of the set of closed curves from $[0,1]$ to  $Z$ with endpoints equal to $z$.  Any covering map $p_{Y,X}$ induces a monomorphism $p_{Y,X \sharp} : \pi_1(Y, y_0)  \to  \pi_1(X,p_{Y,X}(y_0))$; moreover,  when 
both $Y$ and $X$ are path-connected, the cardinality of $p_{Y,X}^{-1}(x)$ agrees with the index of  $p_{Y,X \sharp} (\pi_1(Y,y_0))$ in $\pi_1(X, p_{Y,X}(y_0))$.   For $Y$ path-connected, the covering map $p_{Y,X}$ is called \emph{regular} if  $p_{Y,X \sharp} (\pi_1(Y,y_0))$ is a normal subgroup of $\pi_1(X, p_{Y,X}(y_0))$. 

Before defining the group of deck transformations of a covering space, we introduce some terminology of group actions. 

\begin{definition}\label{defs-actions}
A group of homeomorphisms  $G$  of a topological space $Y$ is said to \emph{act effectively} or \emph{faithfully} if  $\bigcap_{y \in Y} \big\{  g  \,|\, g(y)=y \big\} = \{e\}$, where $e$ denotes the identity element of $G$. It acts 
\emph{without fixed points} or \emph{freely} if the only element of $G$ that fixes some point of $Y$ is the identity element.  We say that $G$ acts \emph{discontinuously} if the orbits of $G$ in $Y$ are discrete subsets of $Y$ and we say that $G$ acts \emph{properly discontinuously} if every $y \in Y$  has a neighbourhood $U \subset Y$ so that $U \cap gU = \emptyset$  for all $g \in G\setminus \{e\}$\footnote{This is sometimes defined differently, i.e. $G$ acts properly discontinuously if every $y \in Y$  has a neighbourhood $U \subset Y$ so that $U \cap gU \neq \emptyset$  for finitely many $g \in G$}.
\end{definition}
So,  acting properly discontinuously implies acting discontinuously and without fixed points, and every free action is effective.  

The group of deck transformations of a covering space $Y$ of $X$ is the group of self-equivalences of $Y$:
\begin{equation*}
G(Y\, | \,X) :  = \Big\{  h: Y \to Y  \,|\, h \text{ is an homeomorphism  and  }  p_{Y,X}\circ h=p_{Y,X}\Big\}.
\end{equation*}
By the unique lifting property,  $G(Y\, | \, X)$ acts without fixed points.  Combining this fact with the definition of covering map,
we see that  $G(Y\, | \, X)$ also acts properly discontinuously on $Y$.

If $Y$ is connected and locally path-connected, then $p_{Y,X}$ is regular if 
and only if the group $G(Y\, | \,X)$  acts transitively on each fibre of $p_{Y,X}$. In this case, 
for any $y_0 \in Y$ we have:
\begin{itemize}
\item An  isomorphism of groups: 
\begin{equation*}
G(Y\, | \,X)  \cong   \pi_1(X, p_{Y,X}(y_0)) \Big/ p_{Y,X \sharp}(\pi_1(Y,y_0));
\end{equation*}
\item A bijection between any fibre of $p_{Y,X}$  and $G(Y \, |\, X)$;
\item  A homeomorphism of spaces:
\begin{equation*}
X  \cong Y / G(Y\, | \,X). 
\end{equation*}
\end{itemize}

\begin{definition}[Universal cover of a connected space]
Given a connected topological space  $X$,  a \emph{universal covering space} $\widetilde{X}$ for  $X$ is a connected covering space for $X$ such that for any other connected covering space $Y$  of $X$ there exists a map $f:\widetilde{X}\to Y$ that forms a commutative triangle with the corresponding covering maps, 
i.e. $p_{Y,X}  \circ f =  p_{\widetilde X, X}$. 
\end{definition}
Since we do not require $X$ to be semi-locally simply connected, then $\widetilde X$ might not be simply connected. Thus, the group $G(\widetilde X\, | \,X)$ of deck transformations of $\widetilde{X}$ might not be isomorphic to the fundamental group  of $X$.  However, $G(\widetilde X\, | \,X)$ acts properly discontinuously on $\widetilde X$, transitively on each fibre of $p_{Y,X}$; thus $p_{\widetilde X,X}$ is regular.  Moreover,  any (connected) covering space of $X$  is covered by $\widetilde X$. In particular, universal covering spaces of a connected and locally path-connected space are equivalent.

Recall also  that for a connected topological space  $Y$ and a group $G$ of homeomorphisms of $Y$ acting properly discontinuously on $Y$, the projection map $Y  \to Y/G$ is a  regular covering whose group of deck transformations coincides with $G$, i.e.  $G(Y\, | \,Y/G)= G$.  
\\ We conclude this subsection summarising some results  that will be used later. 

\begin{proposition}\label{prop-X/Hcover}
Let $p:  Y  \to  X$  be a regular covering and let $H  \leq   G(Y\, | \,X)$. 
\begin{itemize}
\item If  $Y$ is connected, then the projection map $Y  \to  Y/H$  is a regular covering map and $G(Y\, | \,Y/H)=H$;
\item If $Y$ is path connected and locally path connected and $H$ is a normal subgroup of $G(Y\, | \,X)$, then the projection map $Y/H  \to X$ is a regular covering map and $G(Y/H \, | \,X)=  G(Y\, | \,X)/H$. 
\end{itemize}
 \end{proposition}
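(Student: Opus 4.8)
The plan is to prove Proposition \ref{prop-X/Hcover} by assembling the general covering-space facts recalled earlier in this subsection, specifically the statement that for a connected space $Y$ and a group $G$ of homeomorphisms acting properly discontinuously, the projection $Y \to Y/G$ is a regular covering with deck group exactly $G$, together with the normality characterisation of regular coverings.

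\textbf{First bullet.} Since $p: Y \to X$ is a regular covering and $Y$ is connected, the deck group $G(Y\,|\,X)$ acts properly discontinuously on $Y$; any subgroup $H \leq G(Y\,|\,X)$ therefore also acts properly discontinuously on $Y$ (the defining neighbourhood condition, $U \cap gU = \emptyset$ for all $g \neq e$, passes to the subgroup immediately). By the cited fact about properly discontinuous actions, the projection $\pi_H: Y \to Y/H$ is a regular covering map and $G(Y\,|\,Y/H) = H$. This is essentially immediate; no further argument is needed.

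\textbf{Second bullet.} Now assume in addition that $Y$ is path connected and locally path connected and that $H \trianglelefteq G(Y\,|\,X)$ is normal. One needs to produce the covering map $Y/H \to X$ and compute its deck group. The map is the natural factorisation $q: Y/H \to X$ of $p$ through $\pi_H$ (well-defined because $H$-orbits are contained in the fibres of $p$, as $H \leq G(Y\,|\,X)$). To see $q$ is a covering map: take $x \in X$ and an evenly covered neighbourhood $U$ with $p^{-1}(U) = \bigsqcup_\alpha V_\alpha$, each $V_\alpha$ mapped homeomorphically onto $U$; the deck group $G(Y\,|\,X)$ permutes the sheets $V_\alpha$ transitively (by regularity of $p$), and the sheets of $q^{-1}(U)$ are the $H$-orbits of sheets, i.e.\ the sets $\pi_H(\bigcup_{\beta \in Hα} V_\beta)$, each of which $q$ maps homeomorphically to $U$; shrinking $U$ if necessary (path connected, using local path connectedness of $Y$ which descends to $Y/H$) gives the even covering property. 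For regularity and the deck group: since $H \trianglelefteq G(Y\,|\,X)$, the quotient group $G(Y\,|\,X)/H$ acts on $Y/H$, this action is by deck transformations of $q$, it is properly discontinuous, and $X = Y/G(Y\,|\,X) = (Y/H)\big/(G(Y\,|\,X)/H)$; hence by the same cited fact, $q: Y/H \to X$ is a regular covering with $G(Y/H\,|\,X) = G(Y\,|\,X)/H$. Alternatively one may invoke the fundamental-group dictionary: for a chosen basepoint $y_0 \in Y$ with images $\bar y_0 \in Y/H$ and $x_0 \in X$, the subgroup $q_\sharp(\pi_1(Y/H, \bar y_0))$ of $\pi_1(X,x_0)$ corresponds to the preimage of $H$ under the surjection $\pi_1(X,x_0) \twoheadrightarrow G(Y\,|\,X)$, which is normal because $H$ is, so $q$ is regular and the quotient is $G(Y\,|\,X)/H$.

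\textbf{Expected main obstacle.} The only genuinely delicate point is checking that $q: Y/H \to X$ is a \emph{covering map} rather than merely a quotient map — one must verify the local triviality, which requires knowing that $Y/H$ inherits enough separation/connectedness (this is where path connectedness and local path connectedness of $Y$ enter, so that evenly covered neighbourhoods in $X$ pull back correctly through the two-stage quotient). Once local triviality is in hand, regularity and the identification of the deck group are formal consequences of the general lemma on properly discontinuous actions and the isomorphism $X \cong Y/G(Y\,|\,X)$, applied to the induced $G(Y\,|\,X)/H$-action on $Y/H$. Everything else is bookkeeping with the definitions recalled above.
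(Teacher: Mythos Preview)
Your proposal is correct and follows essentially the same approach as the paper: both bullets are reduced to the general fact (recalled just above the proposition) that a properly discontinuous action on a connected space yields a regular covering with deck group equal to the acting group, applied first to $H$ on $Y$ and then to $G(Y\,|\,X)/H$ on $Y/H$ (together with $X\cong Y/G(Y\,|\,X)$). The paper's proof is terser---it simply notes that $G(Y\,|\,X)/H$ acts properly discontinuously on the connected space $Y/H$ and cites Hatcher's Exercise~24---whereas you also spell out the even-covering verification and offer the fundamental-group alternative, but the core argument is the same.
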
 
 
 \begin{proof}
For a covering map, the group of deck transformations acts properly discontinuously on the total space.  Hence, $H$ also acts properly discontinuously on $Y$ and so the first item holds by the paragraph above this proposition.  The second item can be proved in a similar way:  first observe that $Y/H$ is connected because it is the image of the projection map which is continuous, then note that  $G(Y\, | \,X)/H$  acts properly discontinuously on $Y/H$ (see also Exercise 24 in \cite[Chapter 1, Section 1.3]{Hatcher}). 
 \end{proof}

\subsubsection{Coverings of metric spaces and $\RCD^*(K,N)$  spaces}\label{ss:LiftMMS}

We now discuss some definitions and results related to coverings of metric spaces. For more details we refer to \cite{SormaniWei} and \cite{MondinoWei}.

Let  $(X,\sfd_X)$ be a length metric space and $p_{Y,X}:Y  \to X$ be a covering map.  The length and metric structure of $X$ can be lifted to  $Y$ so that the covering map becomes a local isometry.  Explicitly,  denoting by $L_X$  the length structure of $X$,  define the metric $\sfd_Y : Y \times  Y  \to \R$ as 
\begin{equation}\label{eq:deftildedX}
\sfd_Y (y,  y'):= \inf  \Big\{ L_{X}(  p_{ Y, X} \circ  \gamma) \, \big| \, \gamma:[0,1]\to Y , \,
 p_{ Y, X} \circ  \gamma \text{ is Lipschitz and } \gamma(0)=y, \gamma(1)=y'    \Big\}.
 \end{equation}

This lifting process implies that $Y$ is complete whenever $X$ is so. 
In particular, if $X$ is compact,  then $Y$ will be a complete, locally compact length space, thus 
proper \cite[Proposition 2.5.22]{BuragoBuragoIvanov}.

If  $X$ is locally compact and $\mm_X$ is a Borel measure on it, we can lift $\mm_X$ to a Borel measure $\mm_Y$ on $Y$ that is locally isomorphic to $\mm_X$. In order to define  $\mm_Y$,   denote by $\cB(Y)$ the family of Borel subsets of $Y$ and consider the following collection of subsets of $Y$: 
$$
{\Sigma}:=\left\{ \, E   \subset  Y \,\big|\,  p_{Y, X}|_{E}:   E    \to    p_{ Y, X}(E) \text{ is an isometry} \right\}.
$$
Note that $\Sigma$  is stable under intersections and that $Y$ is locally compact given that $p_{Y,X}$ is a local isometry. 
Thus, the smallest $\sigma$-algebra that contains $\Sigma$ equals $\cB(Y)$. For $E \in \Sigma$,  define  $\mm_Y(E):=\mm_{X}\big(p_{Y,X}(E)\big)$ and then extend it to all $\mathcal B(Y)$.

From now on, all the covering spaces will be endowed with this metric and measure. The following result was proved in    \cite{MondinoWei}.

\begin{theorem}\label{thm-UniversalCover}
For any $K \in \mathbb R$  and any $N  \in (1, \infty)$, any $\RCD^*(K,N)$  space admits a universal cover space $(\widetilde X,  \sfd_{\widetilde X},  \mm_{\widetilde X})$
which is itself an $\RCD^*(K,N)$  space. 
\end{theorem}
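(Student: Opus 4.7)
The plan is to prove the theorem in two stages: first establish the existence of a universal cover in the sense of Section~\ref{SSS:CovSpacesT}, then show the lifted metric measure structure defined in Section~\ref{ss:LiftMMS} inherits the $\RCD^*(K,N)$ condition.

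For existence, I would follow the $\delta$-cover approach of Sormani-Wei. For each $\delta>0$ one builds the $\delta$-cover $\widetilde{X}^\delta\to X$ whose points are equivalence classes of paths, where two paths are equivalent if they differ by a product of $\delta$-lassos (short loops traversed along an outbound segment). Since a length space $X$ is path-connected, each $\widetilde{X}^\delta$ is a regular covering space with deck group $\pi_1(X,x_0)/\pi_1(X,\delta,x_0)$, and there is a tower of maps $\widetilde{X}^{\delta'}\to \widetilde{X}^{\delta}$ for $\delta'<\delta$. The crucial point is that for compact $\RCD^*(K,N)$ spaces Bishop-Gromov gives a uniform local volume estimate, which together with the doubling property implies that the family $\{\widetilde{X}^\delta\}_{\delta>0}$ stabilises at some $\delta_0>0$: for all $\delta<\delta_0$ the covers agree. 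This stable cover, taken as the projective limit (trivially, as the system stabilises), is then shown to satisfy the universal property by a standard lifting argument, using that any connected covering $Y\to X$ is dominated by some $\widetilde{X}^\delta$. The non-compact case reduces to the compact one by exhaustion on relatively compact open sets and gluing.

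Next, endow $\widetilde X$ with the lifted length distance \eqref{eq:deftildedX} and lifted measure $\mm_{\widetilde X}$ as in Section~\ref{ss:LiftMMS}. The covering map $p_{\widetilde X,X}$ is then a local isometry of metric measure spaces: each $x\in X$ has a neighbourhood $U$ evenly covered by sheets $\{E_\alpha\}$ with $(E_\alpha,\sfd_{\widetilde X},\mm_{\widetilde X})$ isomorphic to $(U,\sfd,\mm)$. In particular $(\widetilde X,\sfd_{\widetilde X})$ is a complete, locally compact length space, hence proper and geodesic.

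For transferring the curvature-dimension condition, I would use the local-to-global principle. Because the covering map is a local isometry, the validity of $\CD^*_{\mathrm{loc}}(K,N)$ on $X$ immediately transfers to $\widetilde X$: every point $\tilde x\in\widetilde X$ has a neighbourhood $\widetilde U$ which is isomorphic to a neighbourhood $U=p_{\widetilde X, X}(\widetilde U)$ on which the $\CD(K,N)$ inequality holds along $W_2$-geodesics supported there, and such local geodesics lift uniquely to $\widetilde U$ because geodesic lifting works under local isometry. Then one invokes the fact that $\RCD^*(K,N)$ spaces are essentially non-branching (Rajala-Sturm) — a property which also lifts locally, and hence globally via the lifted geodesic structure — to apply the Cavalletti-Milman-type globalization and conclude $\CD^*(K,N)$ on $\widetilde X$. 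Infinitesimal Hilbertianity passes to $\widetilde X$ because the Cheeger energy is a local object: if $\varphi_\alpha$ is a partition of unity subordinate to an evenly covered cover of $X$, lifted to $\widetilde X$ it gives a partition of unity on $\widetilde X$, and the parallelogram identity for $\Ch$ reduces to its local form on evenly covered charts, which is inherited from $X$.

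The main obstacle is the stabilisation of the $\delta$-covers in the first step: one must show that the fundamental group at scale $\delta$ does not keep shrinking, which requires the Bishop-Gromov inequality together with a careful argument that small loops can be filled using the local $\RCD^*$ geometry. The CD transfer is more routine once local isometry and essential non-branching are used correctly, but one has to be careful that $W_2$-geodesics on $\widetilde X$ between globally supported measures may not project to geodesics on $X$, which is why the local-to-global theorem is essential rather than a direct pushdown-pullup argument.
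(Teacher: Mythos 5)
First, note that the paper does not prove Theorem \ref{thm-UniversalCover} at all: it is quoted verbatim from \cite{MondinoWei}, so the only internal point of comparison is Lemma \ref{lem-RCDpasses}, which handles the transfer of the $\RCD^*(K,N)$ condition to a covering. Your second stage is essentially that lemma, with one caveat: you propose to globalise $\CD^*_{loc}$ via essential non-branching and a Cavalletti--Milman-type argument, but essential non-branching of $\widetilde X$ is normally deduced \emph{from} the $\RCD^*$ condition on $\widetilde X$, which is what you are trying to prove; asserting that it ``lifts locally, and hence globally'' is not justified, since essential non-branching is a statement about optimal dynamical plans between arbitrary measures, not a property of individual points. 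The route actually used in Lemma \ref{lem-RCDpasses} avoids this circularity by working with the strong entropic condition $\text{CD}^{\text{e}}(K,N)$ of Erbar--Kuwada--Sturm, for which local-to-global holds on geodesic spaces without any non-branching hypothesis. You should switch to that formulation (or otherwise justify essential non-branching of $\widetilde X$ a priori).

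The more serious gap is in the existence stage, which is the real content of the theorem. Your outline correctly identifies the Sormani--Wei $\delta$-cover strategy, but the stabilisation of the $\delta$-covers is exactly the hard step, and ``Bishop--Gromov together with a careful argument that small loops can be filled using the local $\RCD^*$ geometry'' is not a proof and in fact points in the wrong direction: the theorem does \emph{not} assert (and the known proofs do not establish) that $X$ is semi-locally simply connected, so no loop-filling is available. What is actually needed is the pair of quantitative statements known as the Halfway Lemma and the Uniform Cut Lemma (Sormani--Wei, extended to $\RCD^*$ by Mondino--Wei): one produces generators of the deck group of $\widetilde X^\delta$ with controlled ``halfway'' length, the other shows that any such generator displaces every point of the cover by a definite amount depending only on $K$, $N$ and $\diam(X)$; combining these with Bishop--Gromov packing yields a positive $\delta_0$ below which no new deck transformations can appear, hence stabilisation. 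Without these two lemmas (or a substitute), the stabilisation claim, and with it the existence of the universal cover, is unproven. The same applies, with additional work on relative $\delta$-covers, to your one-line reduction of the non-compact case ``by exhaustion and gluing''.
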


We now state Sormani-Wei's definition of revised fundamental group \cite{SormaniWei}.

\begin{definition}(Revised fundamental group)
Given  a complete length metric space $(X,\sfd_{X})$ that admits a universal cover  $(\widetilde{X}, \sfd_{\widetilde{X}})$,  the revised fundamental group of $X$, denoted by $\bar\pi_1(X)$, is defined to be the  group of deck transformations $G(\widetilde{X}\, | \, X)$.  
\end{definition}

Recall that the covering map $p_{\widetilde X, X}$  associated to the universal cover space of $X$ is regular and thus $\bar\pi_1(X)$ acts transitively on each fibre of $p_{\widetilde X, X}$  and properly discontinuously on $\widetilde X$ by homeomorphisms; such homeomorphisms are measure-preserving isometries on $\widetilde{X}$, provided $\widetilde{X}$ is endowed with the lifted distance and measure of $X$, as described above.
\smallskip

We conclude this subsection by mentioning two properties that will be used later. First, for a covering map $p_{Y,X}: Y \to X$, one can prove (by lifting geodesics of $X$ to $Y$) that for any $x,x'\in X$  and $y \in Y$ with $y  \in p_{Y,X}^{-1}(x)$ there exists $y' \in p_{Y,X}^{-1}(x')$ such that $\sfd_Y(y, y')= \sfd_X(x, x')$. It follows that if $p_{Y,X}$ is regular, and thus $G(Y \, |\, X)$  acts transitively on its fibres, then for any $y,y'' \in Y$ there exists $h \in G(Y \, |\, X)$ such that 
\begin{equation}\label{eq-distlessDiam}
\sfd_Y( y, h(y''))  \leq \diam(X).
\end{equation}

The second property is that  a quotient space $Y/H$ as in Proposition \ref{prop-X/Hcover} is an  $\RCD^*(K,N)$ space provided either $X$ or $Y$ is an $\RCD^*(K,N)$ space.
We give more details below.  In Theorem \ref{lem-GalGro} we will use this fact to get an upper bound on the revised first Betti number of an $\RCD^*(K,N)$ by passing to a quotient space ($\widetilde X/H$  for $H = [ \bar \pi_1(X), \bar \pi_1(X)]$); this fact will be also useful in Lemma \ref{lem-torus}  to infer that the GH convergence of a sequence of quotient spaces can be promoted to mGH convergence.

\begin{lemma}\label{lem-RCDpasses}
Let $(X,\sfd_X,\mm_X)$ be a compact m.m.s with a regular covering map $p_{Y,X}:  Y \to X$. Assume that $(Y,\sfd_Y, \mm_Y)$ has the structure of m.m.s. so that $p_{Y,X}$ is a surjective local isomorphism of m.m.s.  and that $p_{Y,X}^{-1}(x)$ is at most countable.  Let  $K \in \R$ and $N\in (1,\infty)$. Then $(X,\sfd_X,\mm_X)$ is an $\RCD^*(K,N)$ space  if and only if $(Y,\sfd_Y, \mm_Y)$ is so. 
\end{lemma}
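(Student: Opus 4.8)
The plan is to exploit the fact that the $\RCD^{*}(K,N)$ condition is, modulo the globalisation theorem for the reduced curvature-dimension condition, a \emph{local} one, and can therefore be transported across the local isomorphism $p_{Y,X}$ in both directions. First I would record the elementary consequences of the hypotheses that $p_{Y,X}$ is a surjective local isomorphism of metric measure spaces and that its fibres are at most countable: that $(Y,\sfd_Y,\mm_Y)$ is a metric measure space in the sense of Section~\ref{s:background} (complete, separable, with $\mm_Y$ locally finite and of full support, and with $\mm_Y$-a.e.\ point lying over an $\mm_X$-a.e.\ point of $X$); that for every $y\in Y$ there is $\rho(y)>0$ such that $p_{Y,X}$ restricts to a measure-preserving isometry $\Phi_y$ of $B_{3\rho(y)}(y)\subset Y$ onto $B_{3\rho(y)}(p_{Y,X}(y))\subset X$; and that, by lifting and projecting geodesics, a minimising geodesic of $X$ joining two points of $B_{\rho(y)}(p_{Y,X}(y))$ stays inside $B_{3\rho(y)}(p_{Y,X}(y))$ and corresponds under $\Phi_y$ to a minimising geodesic of $Y$, and conversely; finally, pushing forward (or pulling back through the branches $\Phi_y$) preserves $\mm$-absolute continuity and, locally, densities.

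I would then verify that the three ``local ingredients'' transfer in both directions. \emph{Infinitesimal Hilbertianity} is equivalent to the parallelogram identity for minimal weak upper gradients, a condition which is local and can be checked on the isometric balls $B_{3\rho(y)}(y)$; hence $X$ is infinitesimally Hilbertian if and only if $Y$ is (see also \cite{MondinoWei}). \emph{Non-branching} transfers as a purely metric statement: a branching of two geodesics at a parameter $t_{0}$ is witnessed, on an arbitrarily short subinterval around $t_{0}$, by genuine minimising geodesics contained in a single ball $B_{3\rho(y)}(\cdot)$, so applying $\Phi_y$ or $\Phi_y^{-1}$ carries branching configurations back and forth; thus $X$ is non-branching if and only if $Y$ is. Finally, the \emph{local reduced curvature-dimension condition} $\CD^{*}_{\mathrm{loc}}(K,N)$ transfers: given $y_{0}$ and $\mu_{0},\mu_{1}\in\cP_{\infty}(Y,\sfd_Y,\mm_Y)$ with support in $B_{\rho(y_{0})}(y_{0})$, their push-forwards have support in $B_{\rho(y_{0})}(p_{Y,X}(y_{0}))$; any $W_{2}$-geodesic between the push-forwards is induced by an optimal dynamical plan concentrated on geodesics joining the two supports, hence it stays in $B_{3\rho(y_{0})}(p_{Y,X}(y_{0}))$, and the $\CD^{*}(K,N)$-concavity inequality it satisfies is transported verbatim by $\Phi_{y_{0}}^{-1}$, which leaves the distances and the densities entering the inequality unchanged. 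The opposite transfer is identical.

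To conclude, suppose one of the two spaces, say $Z$, with partner $Z'$, is $\RCD^{*}(K,N)$. Then $Z$ is non-branching by \cite{Deng}, hence so is $Z'$ by the step above; moreover $Z'$ is infinitesimally Hilbertian and satisfies $\CD^{*}_{\mathrm{loc}}(K,N)$. By the local-to-global theorem for the reduced curvature-dimension condition on non-branching spaces (\cite{BS2010}; see also the discussion following Definition~\ref{def:RCD*}), $Z'$ satisfies $\CD^{*}(K,N)$, and therefore $Z'$ is $\RCD^{*}(K,N)$. Applying this once with $Z=X$ and once with $Z=Y$ yields the claimed equivalence.

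I expect the argument to be soft once the right formulation is in place, so there is no single deep obstacle; the work lies in the careful bookkeeping. The points that need genuine care are that the lifted structure $(Y,\sfd_Y,\mm_Y)$ does satisfy all the standing assumptions on a metric measure space — this is exactly where the countability of the fibres is used — and that the local metric-measure structures of $X$ and $Y$ match closely enough through the branches $\Phi_y$ for the local curvature-dimension condition and infinitesimal Hilbertianity to transfer verbatim. The only non-formal inputs are the local-to-global property of $\CD^{*}(K,N)$ on non-branching spaces and the full non-branching of $\RCD^{*}(K,N)$ spaces, both of which are available off the shelf.
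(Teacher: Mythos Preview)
Your proposal is correct and follows essentially the same local-to-global strategy as the paper, but routes through a different globalisation theorem. The paper transfers the curvature-dimension bound via the \emph{entropic} condition $\CD^{\rm e}(K,N)$ of Erbar--Kuwada--Sturm: it uses \cite[Theorem~3.17]{EKS} to identify $\RCD^{*}(K,N)$ with infinitesimal Hilbertianity plus the strong $\CD^{\rm e}(K,N)$ condition, and then \cite[Theorem~3.14]{EKS}, which gives local-to-global for strong $\CD^{\rm e}$ on \emph{geodesic} spaces without any non-branching assumption. You instead stay with $\CD^{*}$ throughout and invoke the Bacher--Sturm globalisation \cite{BS2010}, which requires non-branching; to secure non-branching on the partner space you appeal to Deng's theorem \cite{Deng} and then transfer it via the local isometry. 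Both routes are valid. The paper's has the advantage of being lighter on prerequisites---it bypasses the (deep) full non-branching result of \cite{Deng} and the separate transfer of non-branching---while yours has the virtue of staying closer to the $\CD^{*}$ definition actually in play and making the role of the branching/non-branching dichotomy explicit. The treatment of infinitesimal Hilbertianity and of the basic metric-measure properties of $Y$ (completeness, separability from countable fibres, geodesicity) is essentially the same in both.
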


\begin{proof}
We argue along the lines of  \cite[Lemma 2.18]{MondinoWei}.  

Assume that $(X,\sfd_X,\mm_X)$ is an $\RCD^*(K,N)$ space.   Then $(X,\sfd_X)$  is  complete, separable,   proper  and  geodesic. 
Since $p_{Y,X}$  is a regular covering map, we can apply  \cite[Proposition 3.4.16]{BuragoBuragoIvanov}  stating that the length metrics on $X$  are in $1$-$1$  correspondence  with the $G( Y\, |\, X)$-invariant length metrics on $Y$; thus $(Y,\sfd_Y)$  is a length metric space. 
Since $p_{Y,X}$  is a local isometry, we automatically get that  $(Y,\sfd_Y)$ is a complete and locally compact space. Moreover, by our assumption on $p_{Y,X}^{-1}(x)$, 
$(Y,\sfd_Y)$  is separable.   Now every complete locally compact length space is geodesic \cite[Theorem 2.5.23]{BuragoBuragoIvanov}.  
Hence, $(Y,\sfd_Y)$  is a complete, separable and geodesic space.

In order to prove that $(Y,\sfd_Y,\mm_Y)$ is an $\RCD^*(K,N)$, first recall that by \cite[Theorem 3.17]{EKS} we know that $(Y,\sfd_Y,\mm_Y)$ is $\RCD^*(K,N)$ if and only if it is infinitesimally Hilbertian and it satisfies the strong $\text{CD}^{\text{e}}(K,N)$ condition, defined as in Definition 3.1 of  \cite{EKS}. Since $(X,\sfd_X,\mm_X)$ is an $\RCD^*(K,N)$ space, by  \cite[Theorem 3.17, Remark 3.18]{EKS} we infer that $(X,\sfd_X,\mm_X)$ satisfies the strong $\text{CD}^{\text{e}}(K,N)$ condition. Now  \cite[Theorem 3.14]{EKS} says that on a geodesic m.m.s. the strong $\text{CD}^{\text{e}}(K,N)$ condition is equivalent to the strong local $\text{CD}^{\text{e}}_{\text{loc}}(K,N)$ condition, thus in particular $(X,\sfd_X,\mm_X)$ satisfies the strong local $\text{CD}^{\text{e}}_{\text{loc}}(K,N)$ condition. Now each point $y \in Y$ has a compact neighbourhood $U_{y}$  such that $(U_y, \sfd_Y|_{U_{y}\times U_{y}}, \mm_Y\llcorner_{U_{y}})$ is isomorphic as metric measure space to $(p_{Y, X}(U_y),  \sfd_X|_{p_{Y, X}(U_y)\times p_{Y, X}(U_y)}, \mm_X\llcorner_{p_{Y, X}(U_y)})$.  It follows that the strong local $\text{CD}^{\text{e}}_{\text{loc}}(K,N)$ condition satisfied by $(X,\sfd_X,\mm_X)$ passes to the covering $(Y,\sfd_Y, \mm_Y)$. Since $Y$ is geodesic, then by  \cite[Theorem 3.14]{EKS} it also satisfies the strong $\text{CD}^{\text{e}}(K,N)$ condition.  

It remains to show that $(Y, \sfd_{Y}, \mm_{Y})$ is infinitesimally Hilbertian. This follows by a partition of unity on $Y$ made by Lipschitz functions with compact support contained in small metric balls isomorphic to metric balls in $X$, using the fact that the Cheeger energy is a local object (see \cite{AGS11, Gigli12}). 
Indeed, the validity of the parallelogram identity for the Cheeger energy on $Y$  can be checked locally (on each small ball) using a partition of unity. Since such small balls in $Y$ are isomorphic to small balls of $X$ where the Cheeger energy satisfies the parallelogram identity, we conclude that the Cheeger energy on $Y$ satisfies the parallelogram identity as well.  

Thus $(Y,\sfd_Y, \mm_Y)$ is infinitesimally Hilbertian, satisfies the $\text{CD}^{\text{e}}(K,N)$ condition and $\supp(\mm_Y)=Y$. It follows by \cite[Theorem 3.17]{EKS} that $(Y,\sfd_Y,\mm_Y)$ is an  $\RCD^*(K,N)$ space.

The converse implication can be proved with analogous arguments. 
\end{proof} 

\begin{proposition}\label{prop:RevFundGFinGen}
Let $(X,\sfd,\mm)$ be a compact $\RCD^*(K,N)$ space, for some $K \in \R$ and $N\in (1,\infty)$. Then the revised fundamental group $\bar{\pi}_1(X)$ is finitely generated. 
\end{proposition}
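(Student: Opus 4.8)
The plan is to carry out Gromov's classical finite-generation argument in the synthetic setting, with the sharp Bishop--Gromov comparison (Theorem~\ref{thm-BishopGromov}) playing the role of the Riemannian volume estimate, along the lines of Sormani--Wei \cite{SormaniWei}. Set $D := \diam(X) < \infty$, fix $x_0 \in X$ and a lift $\widetilde x_0 \in \widetilde X$, and recall from Theorem~\ref{thm-UniversalCover} that $(\widetilde X, \sfd_{\widetilde X}, \mm_{\widetilde X})$ is itself an $\RCD^*(K,N)$ space (hence proper and geodesic, with $\supp \mm_{\widetilde X} = \widetilde X$), on which $\bar\pi_1(X) = G(\widetilde X \,|\, X)$ acts by measure-preserving isometries, properly discontinuously, and transitively on each fibre of $p_{\widetilde X, X}$.

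\emph{Step 1: a bounded generating set.} I would first show that
\[
S := \big\{\, \gamma \in \bar\pi_1(X) \ : \ \sfd_{\widetilde X}(\widetilde x_0, \gamma \widetilde x_0) \le 3D \,\big\}
\]
generates $\bar\pi_1(X)$. Given $\gamma \in \bar\pi_1(X)$, pick a minimizing geodesic in $\widetilde X$ from $\widetilde x_0$ to $\gamma \widetilde x_0$ and choose points $\widetilde x_0 = y_0, y_1, \dots, y_k = \gamma \widetilde x_0$ along it with $\sfd_{\widetilde X}(y_{i-1}, y_i) \le D$ for every $i$. By \eqref{eq-distlessDiam} (i.e. transitivity of the deck action on fibres, obtained by lifting geodesics of $X$) pick $h_i \in \bar\pi_1(X)$ with $\sfd_{\widetilde X}(y_i, h_i \widetilde x_0) \le D$, choosing $h_0 = e$ and $h_k = \gamma$. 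The triangle inequality then gives $\sfd_{\widetilde X}(h_{i-1}\widetilde x_0, h_i \widetilde x_0) \le 3D$, i.e. $h_{i-1}^{-1} h_i \in S$ (using that $h_{i-1}$ is an isometry), and hence $\gamma = \prod_{i=1}^{k} (h_{i-1}^{-1} h_i)$ is a product of elements of $S$.

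\emph{Steps 2--3: uniform discreteness of the orbit, then a volume count.} Since the deck action is properly discontinuous, there is an open set $U \ni \widetilde x_0$ with $U \cap \gamma U = \emptyset$ for all $\gamma \ne e$; choosing $\vare_0 > 0$ with $B_{\vare_0}(\widetilde x_0) \subset U$ and using that $\bar\pi_1(X)$ acts isometrically, the balls $B_{\vare_0}(\gamma \widetilde x_0) = \gamma B_{\vare_0}(\widetilde x_0)$, $\gamma \in \bar\pi_1(X)$, are pairwise disjoint. For $\gamma \in S$ one has $B_{\vare_0}(\gamma \widetilde x_0) \subset B_{3D + \vare_0}(\widetilde x_0)$, and all these balls share the common mass $\mm_{\widetilde X}(B_{\vare_0}(\widetilde x_0)) > 0$ (positive because $\supp \mm_{\widetilde X} = \widetilde X$). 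Disjointness therefore yields
\[
|S| \, \mm_{\widetilde X}\big(B_{\vare_0}(\widetilde x_0)\big) \ \le \ \mm_{\widetilde X}\big(B_{3D + \vare_0}(\widetilde x_0)\big),
\]
and Bishop--Gromov (Theorem~\ref{thm-BishopGromov}), applied in $\widetilde X$ at $\widetilde x_0$ with radii $\vare_0 \le 3D + \vare_0$, bounds the right-hand ratio by an explicit finite constant depending only on $K, N, D, \vare_0$. Hence $|S| < \infty$ and $\bar\pi_1(X)$ is finitely generated.

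I expect the only step requiring genuine care to be the uniform discreteness of the orbit $\bar\pi_1(X)\widetilde x_0$ in Step 2: in the absence of an injectivity-radius bound, and without assuming $X$ semi-locally simply connected, the Riemannian-style ``evenly covered ball of uniform radius'' argument is not directly available, so the separation constant $\vare_0$ must be extracted purely from proper discontinuity of the deck action together with the fact that it is by isometries. All remaining ingredients (existence of minimizing geodesics in $\widetilde X$, the estimate \eqref{eq-distlessDiam}, Bishop--Gromov on $\widetilde X$, and measure-preservation of the deck transformations) are already recorded in Section~\ref{s:background}.
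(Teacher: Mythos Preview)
Your argument is correct and follows the same overall strategy as the paper: Gromov's short-generators argument combined with Bishop--Gromov volume comparison on the universal cover. The paper's own proof is essentially a citation to \cite[Proposition 6.4 and Lemma 6.2]{SormaniWei}, where the separation radius for the packing argument is supplied by the structural fact (from the Sormani--Wei theory underlying Theorem~\ref{thm-UniversalCover}) that $\widetilde X = X^{\delta_0}$ coincides with a specific $\delta_0$-cover; this gives a uniform $\delta_0$ depending only on $X$, and the bound on the number of generators is then phrased as the maximal number of $\delta_0$-balls fitting in a ball of radius $\diam(X)$ in $\widetilde X$.

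Your route differs only in how you obtain the separation constant: rather than invoking the $\delta$-cover machinery, you extract $\vare_0$ directly from proper discontinuity (Definition~\ref{defs-actions}) at the single point $\widetilde x_0$, together with the fact that the deck action is by isometries. This is entirely valid and arguably more self-contained; your worry about Step~2 is unfounded, since the paper's definition of ``properly discontinuous'' hands you exactly the neighbourhood $U$ you need. The trade-off is that your $\vare_0$ is basepoint-dependent and not explicitly quantified, whereas the $\delta_0$ in the paper's formulation is an intrinsic invariant of $X$ and yields a cleaner quantitative bound (balls of radius $\delta_0$ inside a ball of radius $D$, rather than $\vare_0$ inside $3D+\vare_0$). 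For the purposes of Proposition~\ref{prop:RevFundGFinGen} itself, either works.
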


\begin{proof}
 By \cite[Proposition 6.4  and Lemma 6.2]{SormaniWei} and Bishop-Gromov volume comparison theorem, for 
 any compact $\RCD^*(K,N)$ space $(X,\sfd,\mm)$, its revised fundamental group $\bar \pi_1(X)$ can be generated by a set of cardinality 
at most $N( \delta_0, \diam(X)) < \infty$, where $\delta_0$  corresponds to the $\delta_0$-cover of $X$ so that $\widetilde X = X^{\delta_0}$
 and $N(\delta_0, \diam(X))$ is the maximal number of balls in $\widetilde X$ of radius $\delta_0$ in a ball of radius $\diam(X)$.
\end{proof}

\begin{corollary}\label{lem-barXRCD}
Let $(X,\sfd,\mm)$ be a compact $\RCD^*(K,N)$ space, for some $K \in \R$ and $N\in (1,\infty)$. Then for any normal subgroup $H$ of the revised fundamental group $\bar{\pi}_1(X)$, the metric measure space $(\widetilde X / H, \sfd_{\widetilde{X}/H}, \mm_{\widetilde X /H})$ is an $\RCD^*(K,N)$ space which is covered by $\widetilde X$ and covers $X$. 
\end{corollary}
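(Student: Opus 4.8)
The plan is to exhibit $\widetilde X/H$ as the total space of a regular covering of the \emph{compact} $\RCD^*(K,N)$ space $X$ with at most countable fibres, and then to quote Lemma \ref{lem-RCDpasses}.

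First I would recall that, by Theorem \ref{thm-UniversalCover}, the universal cover $\widetilde X$ exists and that the revised fundamental group $\bar\pi_1(X)=G(\widetilde X\, | \, X)$ acts on $(\widetilde X,\sfd_{\widetilde X},\mm_{\widetilde X})$ properly discontinuously by measure-preserving isometries (Section \ref{SSS:CovSpacesT}). Its subgroup $H$ therefore acts in the same way, so the orbit space $\widetilde X/H$, equipped with the quotient distance \eqref{eq-quotdist} and the induced measure, is a metric measure space, the projection $\pi\colon\widetilde X\to\widetilde X/H$ is a local isomorphism of m.m.s., and by Proposition \ref{prop-X/Hcover}(i) (using that $\widetilde X$ is connected) $\pi$ is a regular covering map with $G(\widetilde X\, | \,\widetilde X/H)=H$. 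This already gives the clause ``$\widetilde X/H$ is covered by $\widetilde X$''.

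Next, since $X$ is a length space so is $\widetilde X$ (its length structure being lifted from that of $X$), hence $\widetilde X$ is path-connected and locally path-connected; as $H$ is normal in $\bar\pi_1(X)$, Proposition \ref{prop-X/Hcover}(ii) yields that the induced map $q\colon\widetilde X/H\to X$ is a regular covering map with $G(\widetilde X/H\, | \,X)=\bar\pi_1(X)/H$, which gives ``$\widetilde X/H$ covers $X$''. Working on a neighbourhood small enough that $\pi$ and the universal covering map $p_{\widetilde X,X}$ are both isometries onto their images and measure preserving, one checks that $q$ is a surjective local isomorphism of m.m.s. Finally, by Proposition \ref{prop:RevFundGFinGen} the group $\bar\pi_1(X)$ is finitely generated, hence countable, so $\bar\pi_1(X)/H$, and with it every fibre $q^{-1}(x)$, is at most countable.

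With these facts at hand, Lemma \ref{lem-RCDpasses} applies to the regular covering $q\colon\widetilde X/H\to X$: the base $(X,\sfd,\mm)$ being compact and $\RCD^*(K,N)$, the total space $(\widetilde X/H,\sfd_{\widetilde X/H},\mm_{\widetilde X/H})$ is $\RCD^*(K,N)$ as well. I expect the only genuinely delicate point to be the bookkeeping that forces one to route the argument through $X$ rather than through $\widetilde X$: Lemma \ref{lem-RCDpasses} is stated for a compact base, while $\widetilde X/H$ need not be compact, so one cannot simply feed the covering $\pi\colon\widetilde X\to\widetilde X/H$ into the lemma; everything else (well-definedness of the quotient metric measure structure, that $q$ is a local isomorphism, countability of the fibres) is a routine assembly of Propositions \ref{prop-X/Hcover} and \ref{prop:RevFundGFinGen} together with standard facts on quotients by properly discontinuous isometric actions.
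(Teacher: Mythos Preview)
Your proof is correct and follows the same route as the paper's: invoke Proposition \ref{prop:RevFundGFinGen} to get countability of the fibres, Proposition \ref{prop-X/Hcover} to obtain the two covering maps, and then apply Lemma \ref{lem-RCDpasses} to the regular covering $\widetilde X/H \to X$. Your remark that compactness of the base forces one to apply Lemma \ref{lem-RCDpasses} to $q$ rather than to $\pi$ is exactly the right observation, though the paper leaves this implicit.
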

 
 \begin{proof}
   Since from Proposition \ref{prop:RevFundGFinGen} we know that $\bar \pi_1(X)$ is finitely generated, then  it is at most countable. Thus the cardinality of each fibre of the covering map is at most countable.  We can thus conclude  using Proposition \ref{prop-X/Hcover} and Lemma \ref{lem-RCDpasses}. 
\end{proof}

\begin{remark}\label{rmrk-X/Hcover}
Since the group of deck transformations $G(Y \, | \, X)$ acts properly discontinuously on $Y$, the semi-metric 
\begin{equation}\label{eq-semi-metric}
\sfd_{Y/H}(Hy , Hy')=  \inf \Big\{ \,\sfd_Y(z,z')  \, \big|\,  z \in Hy , \, z' \in Hy'  \Big\}
\end{equation}   
defined on the quotient space $Y/H=\{Hy\, |\, y \in Y \}$ is actually a metric (this can be seen, for example, using Section \ref{ss-conv}). We also observe that, under the assumptions of Proposition \ref{prop-X/Hcover},  since $Y/H$ is a cover of $X$  it can  also be endowed with the lifted metric of $X$ as defined in 
\eqref{eq:deftildedX}.   
We note that this metric coincides with \eqref{eq-semi-metric}, so we will use the quotient metric of $Y/H$ whenever it is convenient. 
Notice that, in particular,  all the covering maps  appearing in Proposition \ref{prop-X/Hcover} are local isometries. 
\end{remark}


\section{Upper bound on the revised first Betti number:  $\be \leq \lfloor N \rfloor$}\label{s-betti}

In this section we obtain an upper bound for the revised first Betti number of an $\RCD^*(K,N)$ space with $K \leq 0$ and $N\in (1,\infty)$. In the case of smooth manifolds, the estimate is due to M.~Gromov \cite{Gromov81} and S.~Gallot \cite{Gallot} (compare also  \cite[Section 9.2]{Petersen}).  

We consider a compact geodesic space admitting a universal cover and define its revised first Betti number as the rank of the abelianisation of the revised fundamental group, whenever the abelianisation is finitely generated. Indeed, the fundamental theorem of finitely generated abelian groups states that for any finitely generated abelian group $G$ there exist a rank $s\in \N$, prime numbers $p_i$ and integers $s_i$ such that $G$ is isomorphic to $\mathbb Z^s \times \mathbb Z_{p_1}^{s_1} \times \cdots \times \mathbb Z_{p_l}^{s_l}$.

\begin{definition}
Let $(X,\sfd)$ be a compact geodesic space admitting a universal cover. Let $\bar{\pi}_1(X)$ be its revised fundamental group, set $H:=[\bar \pi_1(X), \bar \pi_1(X)]$ the commutator and  $\Gamma:=\bar{\pi}_1(X)/H$. Then we define the revised first Betti number of $X$ as 
\begin{equation*}
\be(X) : =\begin{cases}
s & \mbox{ if } \Gamma \mbox{ is finitely generated, } \Gamma = \mathbb{Z}^s \times \mathbb Z_{p_1}^{s_1} \times \cdots \times \mathbb Z_{p_l}^{s_l}, \\

\infty & \mbox{ otherwise.}
\end{cases}
\end{equation*}
\end{definition}
From now on, we denote by $\bar{X}$ the quotient space $\widetilde{X}/H$:
\begin{equation}\label{eq:defbarX}
(\bar X, \sfd_{\bar X}, \mm_{\bar X}):= (\widetilde X / H, \sfd_{\widetilde{X}/H}, \mm_{\widetilde{X}/H}).
\end{equation}
By Proposition \ref{prop-X/Hcover}, we know that $\bar X$ is a cover of $X$; moreover,  $\Gamma$ acts on $\bar X$ as an abelian group of isometries. Since the action of $\bar \pi_1(X)$ is properly discontinuous, the same is true for $\Gamma$. In particular, the action is discontinuous and  all the orbits $\Gamma x$, $x \in X$, are discrete. 

The first step in proving the upper bound on the revised first Betti number consists in showing the appropriate analog of a result of Gromov \cite[Lemma 5.19]{Gromov}. In the case of smooth manifolds, compare with \cite[Lemma 2.1, Section 9.2]{Petersen} for $k=1$ and for general $k \in \mathbb N$ with \cite[Lemma 3.1]{Col97}.

\begin{lemma}\label{lem-Gam'}
Let $(X,\sfd)$ be a compact geodesic space that admits a universal cover $(\widetilde X, \sfd_{\widetilde X})$, assume that  $\Gamma:=\bar{\pi}_1(X)/H$ is finitely generated and let $(\bar X, \sfd_{\bar X})$ be as in \eqref{eq:defbarX}.
Then for any $ k \in \mathbb N$ and $x \in \bar X$ there exists a finite index subgroup $\Gamma'=<\tilde\gamma_1,\dots,\tilde\gamma_{\be(X)} >$ of $\Gamma$ isomorphic to $\dZ^ {\be(X)}$ such that for any non trivial element $\tilde\gamma \in \Gamma'$
\begin{equation}\label{eq-lower}
k \diam(X) < \sfd_{\bar X}(\tilde\gamma(x),x)
\end{equation} 
and for all $i=1,...,\be(X)$ 
\begin{equation}\label{eq-upper}
\sfd_{\bar X}(\tilde\gamma_i(x),x) \leq 2k \diam(X).
\end{equation} 
\end{lemma}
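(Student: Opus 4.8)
The plan is to adapt Gromov's classical short-generator argument to the abelian group $\Gamma = \bar\pi_1(X)/H$ acting on $\bar X$ by isometries. Fix the base point $x \in \bar X$ and set $D = \diam(X)$. First I would produce a finite-index free abelian subgroup: since $\Gamma$ is finitely generated abelian, write $\Gamma \cong \mathbb{Z}^{\be(X)} \times F$ with $F$ finite (the torsion subgroup); the projection onto the free part has kernel $F$, and I would take $\Gamma_0 < \Gamma$ to be a complement isomorphic to $\mathbb{Z}^{\be(X)}$, which has index $|F| < \infty$. It remains to replace the chosen generators of $\Gamma_0$ by new generators $\tilde\gamma_1,\dots,\tilde\gamma_{\be(X)}$ that are ``short'' in the sense of \eqref{eq-upper} while all nontrivial elements of the subgroup they generate are ``long'' in the sense of \eqref{eq-lower}. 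The subgroup $\Gamma'$ they generate will automatically still have finite index in $\Gamma$, since it will turn out to have finite index in $\Gamma_0$.

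\textbf{The short-basis construction.} I would build $\tilde\gamma_1,\dots,\tilde\gamma_{\be(X)}$ inductively, mimicking \cite[Lemma 5.19]{Gromov} (see also \cite[Lemma 3.1]{Col97}). Having chosen $\tilde\gamma_1,\dots,\tilde\gamma_{j}$, consider the set of all elements $\gamma \in \Gamma_0$ whose class modulo the subgroup $\langle \tilde\gamma_1,\dots,\tilde\gamma_{j}\rangle$ is nontrivial in $\Gamma_0/\langle \tilde\gamma_1,\dots,\tilde\gamma_{j}\rangle$, i.e. those not already lying in the span of what we have; among these choose $\tilde\gamma_{j+1}$ minimising $\sfd_{\bar X}(\gamma(x), x)$ (the minimum is attained because the orbit $\Gamma_0 x$ is discrete and the action is properly discontinuous, so only finitely many orbit points lie in any bounded ball). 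The key point is that this minimality forces the new basis to satisfy a short-basis inequality: if $\gamma = \sum_i n_i \tilde\gamma_i$ is any nontrivial element with $n_m$ the last nonzero coefficient, then $\sfd_{\bar X}(\gamma(x),x) \geq \sfd_{\bar X}(\tilde\gamma_m(x),x)$, because $\gamma$ is a competitor in the $m$-th minimisation step (its class mod $\langle \tilde\gamma_1,\dots,\tilde\gamma_{m-1}\rangle$ is $n_m \tilde\gamma_m + \cdots \neq 0$). This is the heart of the argument, and the main obstacle is getting this claim exactly right: one must be careful that the minimisation is over the correct coset structure and that the elements $\tilde\gamma_1,\dots,\tilde\gamma_{\be(X)}$ so produced are $\mathbb{Z}$-linearly independent and generate a finite-index subgroup (linear independence follows since at each step $\tilde\gamma_{j+1}$ is nonzero in the quotient, and finite index follows because $\Gamma_0$ has rank exactly $\be(X)$ and the process cannot continue past $\be(X)$ steps, so eventually $\Gamma_0/\langle \tilde\gamma_1,\dots,\tilde\gamma_{\be(X)}\rangle$ is finite).

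\textbf{Deducing the two bounds.} With the short basis in hand, the lower bound \eqref{eq-lower} is obtained by rescaling: the construction so far gives a basis with $\sfd_{\bar X}(\tilde\gamma_m(x),x) = \ell_m$ for some positive reals $\ell_1 \leq \cdots$, and by the short-basis property every nontrivial $\gamma \in \Gamma'$ satisfies $\sfd_{\bar X}(\gamma(x),x) \geq \min_m \ell_m =: \ell > 0$. To force $\ell > kD$ I would simply replace each $\tilde\gamma_i$ by $\tilde\gamma_i^{q}$ for a large common integer $q$ with $q\,\ell > kD$; this passes to a further finite-index subgroup, still free abelian of rank $\be(X)$, still satisfies the short-basis inequality (powers of a basis of a free abelian group form a basis of a finite-index subgroup and the minimality argument is inherited), so \eqref{eq-lower} holds for the rescaled $\Gamma'$. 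For the upper bound \eqref{eq-upper}: after rescaling, $\sfd_{\bar X}(\tilde\gamma_i(x),x) = q\,\ell_i$, and I need $q\,\ell_i \leq 2kD$. Here I would invoke \eqref{eq-distlessDiam} from the excerpt: since $\widetilde X \to X$ is regular, for any deck transformation-orbit point one can find a representative within distance $D = \diam(X)$ of $x$, which gives an a priori bound $\ell_i \leq$ (something controlled by $D$); combined with the exact choice of $q$ (take $q$ the smallest integer with $q\,\ell > kD$, so that $q\,\ell \leq kD + \ell \leq 2kD$ once $\ell \leq kD$, which one can arrange, again using \eqref{eq-distlessDiam}, by noting the original short basis can be taken with each $\ell_i \leq 2\,\text{(bound)}$), the estimate \eqref{eq-upper} follows. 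The only subtlety to check carefully is the bookkeeping of constants so that the final bound is exactly $2kD$ rather than a worse multiple of $D$; this is routine once the rescaling integer $q$ is chosen as the ceiling of $kD/\ell$.
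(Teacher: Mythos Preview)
Your short-basis construction and the ensuing inequality $\sfd_{\bar X}(\gamma(x),x)\ge \ell_m$ are fine. The gap is in the rescaling step. In a general geodesic space with an isometric $\Gamma_0$-action, passing from $\tilde\gamma_i$ to $\tilde\gamma_i^{q}$ does \emph{not} raise the minimum displacement to $q\ell$: you only have the triangle-inequality bound $\sfd_{\bar X}(\tilde\gamma_i^q(x),x)\le q\,\ell_i$, never a matching lower bound. So for an element $\gamma=\sum_i m_i\tilde\gamma_i^{q}=\sum_i (qm_i)\tilde\gamma_i$ your short-basis inequality only gives $\sfd_{\bar X}(\gamma(x),x)\ge \ell$, not $\ge q\ell$. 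It is true that by proper discontinuity only finitely many $\gamma\in\Gamma_0'$ satisfy $\sfd_{\bar X}(\gamma(x),x)\le kD$, so some large $q$ does push the whole subgroup $\langle \tilde\gamma_i^{q}\rangle$ outside that finite set; but then this $q$ is determined by that finite set and you lose all control linking it to $kD/\ell$. Hence the bookkeeping you propose for \eqref{eq-upper}, which needs $q\ell_i\le 2kD$ with $q\approx kD/\ell$, cannot be completed.

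The paper sidesteps this by running the induction in the opposite direction: after producing (via the cocompactness argument you allude to through \eqref{eq-distlessDiam}) an initial independent set $\gamma_1,\dots,\gamma_{\be}$ with displacement $\le 2D$, it chooses each $\tilde\gamma_j$ to \emph{maximise} the leading coefficient $\ell_{jj}$ of $\gamma_j$ subject to the constraint $\sfd_{\bar X}(\tilde\gamma_j(x),x)\le 2kD$. The upper bound \eqref{eq-upper} is then built in, and the lower bound \eqref{eq-lower} comes from a doubling trick: if some $\gamma\in\Gamma'$ had displacement $\le kD$, then $2\gamma$ would have displacement $\le 2kD$ with a strictly larger leading coefficient, contradicting maximality. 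This simultaneous handling of both bounds is exactly what your minimise-then-rescale approach cannot achieve.
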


\begin{proof}
We first find a subgroup $\Gamma''  \leq \Gamma$  of finite index and generated by elements 
that satisfy \eqref{eq-upper} for $k=1$.   For any $\varepsilon >0$, set $r_\eps=2\diam(X)+\eps$ and let $\Gamma_\varepsilon$ be the subgroup of $\Gamma$  generated by the set
$$\overline{\Gamma(r_\eps)}:=  \{ \gamma \in \Gamma \, | \, \sfd_{\bar X}(\gamma(x), x) \leq 2 \diam (X) + \varepsilon\}.$$
Observe that the previous set is not empty since, because of  \eqref{eq-distlessDiam}, there exists $\gamma \in \Gamma$ such that $\sfd_{\bar X}(\gamma(x),x)\leq \diam(X)$. 
Endow  $\bar X / \Gamma_\varepsilon$ with the quotient topology and the distance $\sfd_{\bar X/\Gamma_\varepsilon}$ induced by
$\sfd_{\bar X}$, c.f. \eqref{eq-quotdist}. Let
$\pi_\varepsilon :  \bar X  \to  \bar X / \Gamma_\varepsilon$ be the covering map.  

\textbf{Step 1}. We claim that $\bar X / \Gamma_\varepsilon \subset \overline{B}_{\diam(X)+\eps}(\pi_\eps(x))$, i.e. that for each $z \in \bar X$ it holds
\begin{equation}\label{eq:diamXGammaeps}
 \sfd_{\bar X / \Gamma_\eps}(\pi_\varepsilon(x), \pi_\varepsilon(z)) \leq \diam(X) + \varepsilon. 
 \end{equation}
By contradiction, assume that  there is $z \in \bar X$ such that 
$\sfd_{\bar X / \Gamma_\eps}(\pi_\varepsilon(x), \pi_\varepsilon(z)) > \diam(X) + \varepsilon$. 
Since  $\bar X / \Gamma_\varepsilon$ is a geodesic  space,  we can take a point in
the geodesic connecting  $\pi_\varepsilon(x)$ to $\pi_\varepsilon(z)$,
that we denote again by  $\pi_\varepsilon(z)$,  so that 
$\sfd_{\bar X / \Gamma_\eps}(\pi_\varepsilon(x), \pi_\varepsilon(z)) = \diam(X) + \varepsilon$.
Since the action of $\Gamma$ in $\bar X$ is discontinuous and hence the same is true for the action of $\Gamma_\vare$, there exist representatives $x,z \in \bar X$  that achieve $\sfd_{\bar X / \Gamma_\eps}(\pi_\varepsilon(x), \pi_\varepsilon(z))$, 
c.f. \eqref{eq-quotdist}, we have:
$$\sfd_{\bar X}(x, z)=\sfd_{\bar X / \Gamma_\eps}(\pi_\varepsilon(x), \pi_\varepsilon(z)) = \diam(X) + \varepsilon.$$
Now, there is $\gamma \in \Gamma$ such that $\sfd_{\bar X}(\gamma(x), z) \leq \diam(X)$, c.f. \eqref{eq-distlessDiam}.  Then, 
$$
\sfd_{\bar X}( \gamma (x),  x) \leq  \sfd_{\bar X}(\gamma( x), z)  + \sfd_{\bar X}(z, x) \leq 2\diam(X) + \varepsilon.
$$
This implies that $\gamma \in \Gamma_\varepsilon$.  Thus, $\pi_\varepsilon( \gamma(x))=\pi_\varepsilon(x)$ and
$$
0=\sfd_{\bar X / \Gamma_\eps}(\pi_\varepsilon(x), \pi_\varepsilon( \gamma(x))) \geq \sfd_{\bar X / \Gamma_\eps}(\pi_\varepsilon(x), \pi_\varepsilon(z))  - \sfd_{\bar X / \Gamma_\eps}(\pi_\varepsilon(z), \pi_\varepsilon( \gamma(x))) \geq \varepsilon,  
$$   
where we used that by definition of the quotient distance $\sfd_{\bar X/\Gamma_\eps}$ and the choice of $\gamma$ we have 
$$\sfd_{\bar X / \Gamma_\eps}(\pi_\varepsilon(z), \pi_\varepsilon( \gamma(x)))\leq \sfd_{\bar X}(z, \gamma(x)) \leq \diam(X).$$ 
This is a contradiction, and thus claim \eqref{eq:diamXGammaeps} is proved.

\textbf{Step 2}. Proof of  \eqref{eq-upper} in case $k=1$.
\\ From step 1 we know that $\bar X / \Gamma_\varepsilon \subset \overline{B}_{\diam(X)+\eps}(\pi_\eps(x))$. Since  $\bar X / \Gamma_\eps$ is proper, we infer that  $\bar X / \Gamma_\varepsilon$ is compact and thus the index of $\Gamma_\varepsilon$ in $\Gamma$ is finite. 

Since the action of $\Gamma$ in $\bar X$ is discontinuous, the set 
\begin{equation*}
\Gamma(3 \diam(X)):=  \{  \gamma  \in \Gamma \, |\,  \sfd(x, \gamma(x)) <  3 \diam(X)\}
\end{equation*}
is finite. Thus, there exists some $\eps_1 <\diam(X)$ such that for all $\eps \leq \eps_1$ the sets $\overline{\Gamma(r_\eps)}$
have bounded cardinality. Since their intersection is not empty, we get that  it coincides with some finite set $\overline{\Gamma(r_{\eps_0})}=\{\gamma_1, \ldots, \gamma_m\}$ for $\eps_0$ small enough, i.e.:
\begin{equation*}
\overline{\Gamma(r_{\eps_0})}= \bigcap_{\varepsilon>0}\overline{\Gamma(r_\eps)}=\overline{\Gamma(2\diam(X))}:= \{\gamma \in \Gamma | \ \sfd(x,\gamma(x))\leq 2\diam(X)\}.
\end{equation*}
Hence, for every element of $\overline{\Gamma(r_{\eps_0})}$ inequality (\ref{eq-upper}) holds with $k=1$.

\textbf{Step 3}. Conclusion by induction. 
\\We are going to select $\be$ elements of $\overline{\Gamma(r_{\eps_0})}$, say $\{\tilde\gamma_1, \ldots, \tilde\gamma_{\be}\}$, in such a way that the subgroup $\Gamma'$ generated by $\{\tilde\gamma_1, \ldots, \tilde\gamma_{\be}\}$ satisfies the conclusions of the lemma.    
First observe that, since the rank of $\Gamma_{\vare_0}$  equals $\be$,  by possibly discarding some elements we can choose a linearly independent subset  of $\overline{\Gamma(r_{\eps_0})}$  with cardinality $\be$. For simplicity, let us denote this subset by $\{\gamma_1, \ldots, \gamma_{\be}\}$. Consider $\Gamma''\subset \Gamma_{\eps_0}$ the subgroup generated by $\{\gamma_1, \ldots, \gamma_{\be}\}$. 
For fixed $k \in \N$, we are going to choose $\{\tilde\gamma_1, \ldots, \tilde\gamma_{\be}\}$ in $\Gamma'' \cap \overline{\Gamma(2\diam(X))}$ and such that both \eqref{eq-lower} and \eqref{eq-upper} hold. In order to do that, we proceed by induction on $j=1, \ldots, \be$ and we choose 
$\tilde{\gamma}_1, \ldots, \tilde{\gamma}_j$ in $\Gamma'' \cap \overline{\Gamma(2\diam(X))}$ such that:
\begin{itemize}
\item[$(a)$]  the subgroup $<\tilde\gamma_1, ... , \tilde\gamma_j>$ has finite index in $< \gamma_1, ... , \gamma_j>$;
\item[$(b)$]   $\tilde\gamma_j=   \gamma=\ell_{1j} \tilde\gamma_1 +   \cdots +\ell_{j-1,j}\tilde\gamma_{j-1}+ \ell_{jj}\gamma_j$ is chosen so that
\begin{equation*}
\begin{split}
l_{jj}  = \max & \left\{ |k|, \ k \in \mathbb{Z} \mbox{ s.t. } \exists \ \ell_{1j}, \ldots, \ell_{j-1,j} \in \mathbb{Z} 
\mbox{ s.t. if  } \gamma= \ell_{1j} \tilde\gamma_1 +   \cdots +\ell_{j-1,j}\tilde\gamma_{j-1}+ k \gamma_j \right. \\
& \left. \quad \text{then }\sfd_{\bar X}(\gamma(x),x) \leq 2k \diam(X)\right\}. 
\end{split}
\end{equation*}
\end{itemize}
Notice that condition $(b)$ ensures that $\Gamma'= <\tilde\gamma_1, \dots,\tilde\gamma_{\be}>$  satisfies (\ref{eq-upper}).  We now show that $\Gamma'$ also satisfies (\ref{eq-lower}). For any element $\gamma \in \Gamma'$  there exists  $j \in \{1, \ldots , \be\}$ such that $\gamma$ can be written as $\gamma= m_1\tilde\gamma_1 +  \cdots + m_j\tilde\gamma_j$ with $m_j \neq 0$. Assume by contradiction that  $\sfd(x,\gamma(x))\leq  k\diam(X)$ and consider 
\begin{align*}
2\gamma(x) = 2m_1\tilde\gamma_1 +  \cdots + 2m_j\tilde\gamma_j = \sum_{i=1}^{j-1} (2m_i+ 2m_{j} \ell_{ij})  \tilde\gamma_i  + 2m_j\ell_{jj} \gamma_j.
\end{align*}
Then by using the triangle inequality and (\ref{eq-upper}) we obtain that 
\begin{equation*}
\sfd_{\bar X}(x,2\gamma(x)) \leq \sfd_{\bar X}(x,\gamma(x)) + \sfd_{\bar X}(\gamma(x),2\gamma(x))  =  2\sfd_{\bar X}(x,\gamma(x))  \leq  2k\diam(X).
\end{equation*}
Then $2\gamma$ satisfies $\sfd_{\bar X}(2\gamma(x),x)\leq 2k \diam(X)$ and $|2m_j \ell_{jj}| > |\ell_{jj}|$, contradicting the way we chose $\ell_{jj}$ and $\tilde\gamma_j$ in $(b)$.  This concludes the proof. 
\end{proof}

\begin{remark} \label{rmk:modTorsion}
With a more careful analysis, a similar version of Lemma \ref{lem-Gam'} holds true if one replaces geodesic space by length spaces. c.f.   \cite{Gromov}. 
Furthermore, the same conclusion holds if we consider $\bar X/ T$ instead of $\bar X$, where $T$ is any torsion subgroup of $\Gamma$. 
 \end{remark}

\begin{remark}\label{rem-Xbar-noncompact}
Note that $\bar X$ is not compact.  Indeed, for any $\bar x \in \bar X$ and corresponding $\Gamma'$ given by Lemma  \ref{lem-Gam'}, the orbit $\Gamma' \bar x= \{\gamma(\bar x), \ | \, \gamma \in \Gamma'\}$ is countable, since $\Gamma'$ acts properly discontinuously on $\bar X$ and $\Gamma'$ is isomorphic to $\mathbb{Z}^{\be}$. 
Now, if by contradiction $\bar X$ is compact,
then $\Gamma' \bar x$ has a converging subsequence $\{\gamma_i(\bar x)\}$.  By using either that the action is properly discontinuous or property \eqref{eq-lower}, it is not difficult to show that $\{\gamma_i(\bar x)\}$ must be a constant sequence starting from $i$ large enough, giving a contradiction.
\end{remark}

\begin{remark}\label{rem-Gamma'closed}
It is not difficult to see that $\Gamma'$ is a  closed discrete group in the compact-open topology. Recall if a sequence of isometries $\gamma_i$ in $\Gamma'$ converges to $\gamma$ in the compact-open topology, then it converges uniformly on every compact subset and in particular for any $\bar x \in \bar X$ we have $\gamma_i(\bar x) \to \gamma(\bar x)$. We know that for any fixed $\bar x \in \bar X$, the only converging sequences in the orbit $\Gamma'\bar x$ are (definitely) constant sequences. Thus there exist $\gamma \in \Gamma'$ and $i_0 \in \N$ such that for all $i \geq i_0$ we have $\gamma_i(\bar x)=\gamma(\bar x)$. Therefore any converging sequence $\gamma_i$ in $\Gamma'$ is constantly equal to an element $\gamma$ of $\Gamma'$, yielding that $\Gamma'$ is closed and discrete.  
\end{remark}

In the following, we consider a compact $\RCD^*(K,N)$ space,  $(X,\sfd,\mm)$ with  $N\in (1,\infty)$. By  Theorem \ref{thm-UniversalCover}, we know that it admits a universal cover space,  $(\widetilde X, \widetilde\sfd, \widetilde\mm)$, 
that satisfies the $\RCD^*(K,N)$ condition. Using the same notation  as in Lemma  \ref{lem-Gam'}, by  Corollary \ref{lem-barXRCD}, the quotient m.m.s. $(\bar X, \bar \sfd,\bar \mm)$ is also an $\RCD^*(K,N)$ space. 
Since by Proposition \ref{prop:RevFundGFinGen} we know that the revised fundamental group $\bar{\pi}_1(X)$ is finitely generated, we infer that the revised first Betti number of $(X,\sfd,\mm)$ is finite. 

We are now ready to prove the first main result of the paper, namely the desired upper bound for $\be(X)$. This is done by combining Lemma \ref{lem-Gam'} with Theorem \ref{thm-BishopGromov} for $(\bar X, \bar \sfd,\bar \mm)$, and generalises to the non-smooth $\RCD$ setting the celebrated upper bound proved in the smooth setting by  M.~Gromov \cite{Gromov81} and S.~Gallot  \cite{Gallot} (see also  \cite[Theorem 2.2, Section 9.2]{Petersen} and \cite[Theorem 5.21]{Gromov}). \\

\noindent \textbf{Proof of Theorem \ref{lem-GalGro}}

Let $(X,\sfd,\mm)$ be a compact $\RCD^*(K,N)$ space with $K  \in \R$, $N \in [1,\infty)$, $\supp(\mm)=X$ and $\diam(X) \leq D$. 
If $N=1$, the claim holds trivially (see Remark \ref{rem:N=1});  thus, we can assume $N\in (1,\infty)$ without loss of generality.
By Theorem 1.2 in \cite{MondinoWei} if  $K > 0$ then $\bar \pi_1(X)$ is finite.  Hence,  $\be(X)= \rank (\Gamma)=0$.  Thus the claim holds trivially. 

  Assume that $K \leq 0$ and take $x \in \bar X=\widetilde X/H$. Recall that both $X$ and $\bar X$ are geodesic spaces (see Section 2), thus we can apply Lemma \ref{lem-Gam'} with $k=1$. Therefore, there exists a subgroup of deck transformations $\Gamma'= <\gamma_1, ..., \gamma_{\be}> \subset \Gamma$
such that
\begin{align}
\diam (X) < \sfd_{\bar X}(\gamma(x),x), &\quad \text{for all non trivial $\gamma \in \Gamma'$},  \label{eq-lower2} \\
 \sfd_{\bar X}(\gamma_i(x),x) \leq 2 \diam(X), &\quad \text{for all } i=1,\ldots, \be. \label{eq-upper2}
 \end{align}
By (\ref{eq-lower2}) all the  open balls $B_{\diam(X)/2}(\gamma(x))$, $\gamma \in \Gamma'$, are mutually disjoint.  Now set, 
\begin{equation*}
I_{r} = \Big\{ \gamma=l_1\gamma_1+ \cdots+l_{\be}\gamma_{\be} \in \Gamma' \,:  \,  |l_1|+\cdots+|l_{\be}|\leq r, \, \,\,l_i\in \mathbb Z \Big\}.
\end{equation*}
By (\ref{eq-upper2}),  since each element of $\Gamma$ is an isometry and applying the triangle inequality,  for all $\gamma \in I_r$ we have $\sfd_{\bar X}(\gamma(x), x) \leq 2r\diam(X)$. Hence, for all $\gamma \in I_r$
\begin{equation*}
B_{\diam(X)/2}(\gamma(x))   \subset B_{2r\diam(X) + \diam(X)/2}(x).
\end{equation*}
Since each element of $\Gamma$ preserves the measure, i.e. $\gamma_\sharp \mm_{\bar X}=\mm_{\bar X}$,  then all the balls $B_{\diam(X)/2}(\gamma(x))$ have the same $\mm_{\bar X}$-measure and thus
\begin{equation*}
|I_r| \, \mm_{\bar X}(B_{\diam(X)/2}(\gamma(x))) \leq   \mm_{\bar X} (B_{2r\diam(X) + \diam(X)/2}(x)).
\end{equation*}
By the definition of $I_r$,  $|I_r|$ is non decreasing with respect to $r$. 
If $r = 1$ then  $\{\gamma_i\}_{i=1}^{\be}  \subset I_r$ and thus
\begin{equation}\label{eq-b1leqI_r}
\be \leq |I_r|  \,\, \text{ for }\,\, r\geq 1.
\end{equation}
For arbitrary  $r \in \mathbb N$, it is easy to check that 
\begin{equation}\label{eq-cardinalityI_r}
|I_r| = (2r +1)^{\be}.
\end{equation}

Now we apply the relative volume comparison theorem, Theorem \ref{thm-BishopGromov}, to obtain an upper bound on the cardinality of $I_r$. 
Since the right hand sides of both equations in  Theorem \ref{thm-BishopGromov}  are non increasing as a function of $K$, we can assume 
that $K<0$. Thus, 
\begin{align}\label{eq-relcom}
|I_r| \leq &   \frac{  \mm_{\bar X} (B_{2r\diam(X) + \diam(X)/2}(x))}{\mm_{\bar X}(B_{\diam(X)/2}(\gamma(x)))}  \nonumber
\leq   \frac{\int_0^{2r\diam(X) + \diam(X)/2} \sinh^{N-1} (\sqrt{-K/(N-1)}s) \, ds} {\int_0^{\diam(X)/2} \sinh^{N-1} (\sqrt{-K/(N-1)}s) \, ds} \nonumber \\ 
=   &   \frac{\int_0^{(2r + 1/2)\sqrt{-K\diam(X)^2/(N-1)}} \sinh^{N-1} (s)\, ds}  {\int_0^{\sqrt{   -K\diam(X)^2/(N-1)}/2  }   \sinh^{N-1}(s) \, ds} \nonumber \\
= :  &   C_r(N, -K\diam(X)^2/(N-1)).
\end{align}
That is, $C_r(N, \cdot) : [0,  -KD^2/(N-1)]  \to  \R$  is the function given by
 \begin{equation*}
 C_r(N, t)= \frac{\int_0^{(2r + 1/2)\sqrt{t}} \sinh^{N-1} (s)\, ds} {\int_0^{\sqrt{t}/2} \sinh^{N-1}(s) \, ds}.
 \end{equation*}
  
By \eqref{eq-b1leqI_r} and since $C_r(N, t)$  is non decreasing as a function of $t$, we have $\be(X)  \leq C_r(N, -KD^2/(N-1))$. 
By using the Taylor expansion of $\sinh$, we calculate that 
$$\lim_{t\to0} C_r(N, t)= \left(\frac{(2r+1/2)}{1/2}\right)^{N}.$$
Thus, for small $t$ we have
\begin{equation}
\label{eq-asymptoticC_r(N,t)}
C_r(N, t)   <   2^N\left(2r+\frac 12\right)^{N}  +  \delta .
\end{equation} 
Now assume by contradiction that there exists a sequence $\eps_i\downarrow 0$  and $\RCD^*(K_i,N)$ metric measure spaces $(X_i,\sfd_i,\mm_i)$ such that 
$$-K_i\diam(X_i)^2 \leq \eps_i, \quad \be(X_i)>N.$$
Thanks to \eqref{eq-cardinalityI_r} and (\ref{eq-relcom}), we know that for any integer $r\geq 1$ we have
$$(2r+1)^{\be(X_i)} \leq C_r(N, -K_i\diam(X_i)^2/(N-1)).$$
For $\eps_i$ small enough, we can apply \eqref{eq-asymptoticC_r(N,t)}, so that for all $r \in \N, r\geq 1$
$$(2r+1)^{\be(X_i)} \leq C_r(N, \eps_i) < 2^N(2r+1/2)^{N} + \delta.$$
Thus for $r$ large enough
$$(2r+1)^{\be(X_i)} \leq 5^N r^N.$$
Now, if $\be(X_i)>N$, it is easily seen that the last inequality fails for  $r=r(N)$ large enough. Hence, we have shown that there exists $\eps(N)>0$ such that if $(X,\sfd,\mm)$ is an $\RCD^*(K,N)$ m.m.s. with $-K\diam(X)^2 \leq \eps(N)$, then $\be(X) \leq N$. Since by definition $\be(X)$ is integer, the last bound is actually equivalent to  $\be(X) \leq \lfloor N \rfloor$. This concludes the proof of the second assertion.
\\In order to prove the first assertion, set
$$C(N,t)=\sup\{ \ \be(X): \text{$(X,\sfd,\mm)$ is an $\RCD^*(K,N)$ m.m.s. with } -K\diam(X)^2=t \ \}$$
and observe that, thanks to \eqref{eq-relcom}, $C(N,t)$ is bounded by $C_r(N,t/(N-1))$. Since it is a bounded supremum of integer numbers, $C(N,t)$ is an integer. Moreover, the flat torus $\mathbb{T}^{\lfloor N \rfloor}$ is an $\RCD^*(0,N)$ space with $\be(\mathbb{T}^N)=\lfloor N \rfloor$, hence $C(N,t) \geq \lfloor N \rfloor$. The previous argument also shows that for $t \leq \eps(N)$, $\be(X) \leq N$, thus for any $t \leq \eps(N)$ we have $ \lfloor N \rfloor \leq C(N,t) \leq N$. This implies that $C(N,t)=\lfloor N \rfloor$ for any $t \leq \eps(N)$. As a consequence, $C(N,t)$ is the desired function tending to $\lfloor N \rfloor$ as $t\to 0$.
\hfill$\Box$

\begin{remark}
In the case of $n$-dimensional manifolds, Gallot proved an optimal bound for $\be(M)$ and expressed the function $C(n,t)$ as $\xi(n,t)n$, where $\xi(n,t)$ is an explicit function tending to one as $t$ tends to zero \cite[Section 3]{Gallot}. 
\end{remark}

\begin{corollary} \label{rmk:disjBalls}
Let $(X,\sfd,\mm)$ be a compact $\RCD^*(K,N)$ space with $N \in (1,\infty)$ and $\diam(X) = 1$.  
Let $(\bar X, \sfd_{\bar X},   \mm_{\bar X})$  be as in \eqref{eq:defbarX}.  
Then for any $\bar x  \in \bar X$, $k \in \N$ and $R >1$,  the open ball $B_{R}(\bar x)$ contains at least  $ \lfloor R  /k \rfloor ^{\be(X)}$ disjoint balls of radius $k/2$.
\end{corollary}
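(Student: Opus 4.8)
Since $\be(X)<\infty$ by Proposition~\ref{prop:RevFundGFinGen} and the statement is vacuous when $\be(X)=0$, assume $\be(X)\ge 1$. The plan is to realise the required balls as metric balls attached to a copy of $\mathbb Z^{\be(X)}$ inside $\Gamma$. Apply Lemma~\ref{lem-Gam'} at the point $\bar x$ with the given integer $k$: since $\diam(X)=1$, this yields a finite-index subgroup $\Gamma'=\langle\tilde\gamma_1,\dots,\tilde\gamma_{\be(X)}\rangle\cong\mathbb Z^{\be(X)}$ of $\Gamma$ with $\sfd_{\bar X}(\tilde\gamma(\bar x),\bar x)>k$ for every non-trivial $\tilde\gamma\in\Gamma'$ and $\sfd_{\bar X}(\tilde\gamma_i(\bar x),\bar x)\le 2k$ for each generator. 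Because every element of $\Gamma'$ acts on $\bar X$ as an isometry, for distinct $\tilde\gamma,\tilde\gamma'\in\Gamma'$ one has $\sfd_{\bar X}(\tilde\gamma(\bar x),\tilde\gamma'(\bar x))=\sfd_{\bar X}((\tilde\gamma'^{-1}\tilde\gamma)(\bar x),\bar x)>k$, so the open balls $B_{k/2}(\tilde\gamma(\bar x))$ are pairwise disjoint when $\tilde\gamma$ ranges over \emph{any} subset of $\Gamma'$; and $B_{k/2}(\tilde\gamma(\bar x))\subseteq B_R(\bar x)$ as soon as $\sfd_{\bar X}(\tilde\gamma(\bar x),\bar x)\le R-k/2$. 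Thus everything reduces to producing at least $\lfloor R/k\rfloor^{\be(X)}$ points of the orbit (or more generally of $\bar X$, pairwise at distance $\ge k$) inside $\overline B_{R-k/2}(\bar x)$.

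For the one-dimensional model $\be(X)=1$ I would argue directly and \emph{not} use the orbit points themselves, which are too sparse: by Remark~\ref{rem-Xbar-noncompact} the space $\bar X$ is non-compact and geodesic, so the orbit $\{\tilde\gamma_1^{j}(\bar x)\}_{j\in\mathbb Z}$ is unbounded; picking $M$ with $\sfd_{\bar X}(\tilde\gamma_1^{M}(\bar x),\bar x)\ge R$ and a unit-speed minimizing geodesic $\sigma$ from $\bar x$ to $\tilde\gamma_1^{M}(\bar x)$, the points $\sigma(0),\sigma(k),\dots,\sigma((\lfloor R/k\rfloor-1)k)$ are $\lfloor R/k\rfloor$ points pairwise at distance $\ge k$ inside $\overline B_{R-k/2}(\bar x)$. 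For general $\be(X)$ I would iterate this along the $\be(X)$ commuting generators, the natural scheme being an induction on $\be(X)$ via the quotient $\bar X/\langle\tilde\gamma_{\be(X)}\rangle$: this is again a proper geodesic space (the projection $\bar X\to\bar X/\langle\tilde\gamma_{\be(X)}\rangle$ is a local isometry by Remark~\ref{rmrk-X/Hcover}), it carries an action of $\Gamma'/\langle\tilde\gamma_{\be(X)}\rangle\cong\mathbb Z^{\be(X)-1}$ which still satisfies the separation bound $>k$ and the generator bound $\le 2k$ at the image of $\bar x$, and the separation property transfers back through the covering since the covering is a local isometry and $\langle\tilde\gamma_{\be(X)}\rangle$ is central in $\Gamma'$; one then lifts the $\lfloor R/k\rfloor^{\be(X)-1}$ centres supplied by the inductive hypothesis and spreads each of them into $\lfloor R/k\rfloor$ centres along a long geodesic in the $\langle\tilde\gamma_{\be(X)}\rangle$-direction, as in the case $\be(X)=1$.

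The step I expect to be the real obstacle is exactly this inductive combination for $\be(X)\ge 2$. The crude estimate $\sfd_{\bar X}\big((\tilde\gamma_1^{l_1}\cdots\tilde\gamma_{\be(X)}^{l_{\be(X)}})(\bar x),\bar x\big)\le 2k\sum_i l_i$ coming from the triangle inequality and the generator bound is far too lossy to fit a full $\lfloor R/k\rfloor\times\cdots\times\lfloor R/k\rfloor$ array of orbit points inside $B_R(\bar x)$, so the geodesic/covering refinement is genuinely needed to keep the accumulation of distances under control; the delicate bookkeeping is to arrange the long geodesics used at the different levels of the induction so that the resulting $\lfloor R/k\rfloor^{\be(X)}$ centres are simultaneously pairwise $\ge k$ apart and all contained in $\overline B_{R-k/2}(\bar x)$. (For the use of this statement in Section~\ref{s-GHapp}, where $R$ is taken large, only a lower bound of the form $c(N)\lfloor R/k\rfloor^{\be(X)}$ is actually needed, which makes the bookkeeping less critical there.)
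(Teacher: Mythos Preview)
Your route diverges from the paper's. The paper's argument is precisely the ``crude estimate'' you set aside: it takes the orbit points $\gamma(\bar x)$ for $\gamma\in I_r=\{\ell_1\tilde\gamma_1+\cdots+\ell_{\be}\tilde\gamma_{\be}:\sum_i|\ell_i|\le r\}$, observes that the balls $B_{k/2}(\gamma(\bar x))$ are pairwise disjoint by the separation bound and contained in $B_{k/2+2kr}(\bar x)$ by the triangle inequality together with the generator bound, and then matches $r$ to $R$. Your suspicion that this is lossy is well founded: the paper asserts $|I_r|=(2r+1)^{\be(X)}$, which is the cardinality of the $\ell^\infty$-box $\{\max_i|\ell_i|\le r\}$ rather than of the $\ell^1$-diamond actually defined, so as written the argument only delivers $c(\be)\lfloor R/k\rfloor^{\be(X)}$ disjoint balls with $c(\be)\sim 1/\be!$. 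As you yourself note in your last sentence, this weaker bound is all that is used downstream (in Lemma~\ref{lem-diamEst}).

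Your proposed alternative, aimed at the sharp constant, has a genuine gap at the inductive step --- one you anticipate but do not resolve. After lifting the $\lfloor R/k\rfloor^{\be-1}$ centres from $\bar X/\langle\tilde\gamma_{\be}\rangle$ to points $y_j\in\bar X$ with $\sfd_{\bar X}(y_j,\bar x)\le R-k/2$, you want to spread each $y_j$ into $\lfloor R/k\rfloor$ further centres along a geodesic of length $(\lfloor R/k\rfloor-1)k$. But such a geodesic, started at $y_j$, carries points up to distance roughly $2R$ from $\bar x$, so containment in $B_R(\bar x)$ fails; and since a minimizing geodesic from $y_j$ to $\tilde\gamma_{\be}^M(y_j)$ need not project to the single point $[y_j]$ in the quotient, the cross-separation between centres attached to different $y_j$ is not controlled either. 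Repairing both issues forces you to run the induction on a much smaller ball in the quotient and to give up the sharp constant --- at which point the paper's one-line orbit argument is simpler and yields the same conclusion.
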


\begin{proof}
By Lemma \ref{lem-Gam'} there exists a subgroup of deck transformations $\Gamma'= <\gamma_1, ..., \gamma_{\be}> \subset \Gamma$
such that
\begin{align*}
k\diam (X) < \sfd_{\bar X}(\gamma(x),x), &\quad \text{for all non trivial $\gamma \in \Gamma'$}, \\
 \sfd_{\bar X}(\gamma_i(x),x) \leq 2k\,  \diam(X), &\quad \text{for all } i=1,\ldots, \be.
 \end{align*}
Arguing as in the proof of Theorem \ref{lem-GalGro} we get that, for any $r \in \mathbb N$, the number of disjoint balls of radius $k/2$ in $B_{k/2+ 2kr}(\bar{x})$ is larger than or equal to the number of elements in 
$$I_{r}= \left\{ \gamma \in \Gamma'  \, | \, \gamma=\ell_1 \gamma_1+\ldots +\ell_{\be(X)}\gamma_{\be(X)}, \quad |\ell_1|+\ldots +|\ell_{\be(X)}| < r \right\}.$$
The cardinality of $I_{r}$ equals $(2r+1)^{\be(X)}$.
Then for $R>1$ write  $\lfloor R \rfloor =k(2r+1)$ and get that $B_{R}(\bar x)$ contains at least  $(2r+1)^{\be(X)} = \lfloor R/k \rfloor^{\be(X)}$ disjoint balls of radius $k/2$.  
\end{proof}


\section{Construction of mGH-approximations in the Euclidean space}\label{s-GHapp}

This section is devoted to proving Theorem \ref{thm-GH}, which corresponds to the non-smooth $\RCD$ version of  \cite[Lemma 3.5]{Col97}. 
The main goal is to show that if $(X,\sfd,\mm)$ is an $\RCD^*(-\delta, N)$ space with $\delta=\delta(\eps,N)$ small enough, $\diam(X)=1$ and $\be(X)=\lfloor N \rfloor$, then the covering space  $(\bar X, \sfd_{\bar X},   \mm_{\bar X})$  (defined in \eqref{eq:defbarX}) is locally (on suitably large metric balls) mGH close to the Euclidean space $\R^{\lfloor N \rfloor}$.

The proof consists in applying inductively the almost splitting theorem. More precisely,  we show that $(\bar X, \sfd_{\bar X},   \mm_{\bar X})$  is locally (on suitably large metric balls) mGH close to a product $\R^k \times Y_k$ by induction on $k=1, \ldots ,\lfloor N \rfloor$. Since the diameter of the covering space $\bar X$ is infinite (see  Remark \ref{rem-Xbar-noncompact}),  the base case of induction $k=1$ will follow by carefully applying the almost splitting theorem. As for the inductive step, thanks to Corollary \ref{rmk:disjBalls} we will prove a diameter estimate on $Y_k$ that allows us to apply the almost splitting theorem on $Y_k$. We will conclude by deducing the almost splitting of an additional Euclidean factor by constructing an $\eps$-mGH approximation into $\R^{k+1}\times Y_{k+1}$. 

\begin{theorem}\label{thm-GH}
Fix $N \in (1, \infty)$ and $\beta > (2+\alpha) /\alpha$, where $\alpha=\alpha(N)$ is given by Theorem \ref{thm-ExcessEstV2}. For any $\varepsilon \in (0,1)$ 
there exists $ \delta(\varepsilon, N)>0$ such that the following holds. Let $(X,\sfd,\mm)$ be an $\RCD^*(-\delta^{2\beta},N)$ space with $\delta \in (0, \delta(\vare,N)]$, $\be(X)=\lfloor N \rfloor $ and $\diam(X)=1$, and let
$(\bar X, \sfd_{\bar X},   \mm_{\bar X})$  be the covering space as in \eqref{eq:defbarX}. Then there exists $\bar x \in \bar X $ such that
\begin{equation*}
\sfd_{mGH}(B^{\bar X}_{\vare^{-1}}(\bar x), B^{\R^{\lfloor N \rfloor}}_{\vare^{-1}}(0^{\lfloor N \rfloor})) \leq \vare.
\end{equation*}
\end{theorem}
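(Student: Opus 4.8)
The strategy is an induction on $k=1,\dots,\lfloor N\rfloor$, at each step producing, for a suitable point $\bar x_k\in\bar X$ and a suitably large radius $\rho_k=\rho_k(\varepsilon,N)$, an $\varepsilon_k$-mGH approximation
\begin{equation*}
B^{\bar X}_{\rho_k}(\bar x_k)\ \longrightarrow\ B^{\R^k\times Y_k}_{\rho_k}\big((0^k,y_k)\big),
\end{equation*}
where $(Y_k,\sfd_{Y_k},\mm_{Y_k},y_k)$ is an $\RCD^*(0,N-k)$ space (a point when $N-k<1$), and with $\varepsilon_k\to 0$ and $\rho_k\to\infty$ as $\delta\to 0$. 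When $k=\lfloor N\rfloor$ reached, $N-k<1$ forces $Y_k$ to be a singleton, and rescaling/relabelling gives the claim. The base case $k=1$: since $\diam(\bar X)=\infty$ by Remark \ref{rem-Xbar-noncompact}, pick $\bar x\in\bar X$ and a point $p$ far away; by the relative volume comparison and a standard argument one finds $q$ roughly opposite to $p$ so that the excess $e_{p,q}$ and (via Theorem \ref{thm-ExcessEstV2}, using $\beta>(2+\alpha)/\alpha$) its gradient are uniformly small on all balls $B_r(\bar x)$, $r\in[1,\delta^{-1}]$; then Theorem \ref{thm-AlmSplitV2} with $k=1$ gives the mGH approximation onto a ball in $\R\times Y_1$ with $Y_1$ an $\RCD^*(0,N-1)$ space.

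\textbf{The inductive step.} Assume the approximation at level $k$ with $N-k\geq 1$. I would first show that, provided $\delta$ is small enough, $\diam(Y_k)$ is \emph{large} — this is the heart of the argument and the main obstacle. The point is that a ball of radius $\rho_k$ in $\R^k\times Y_k$ is close to a ball of radius $\rho_k$ in $\bar X$, and by Corollary \ref{rmk:disjBalls} the latter contains at least $\lfloor\rho_k/m\rfloor^{\lfloor N\rfloor}$ disjoint balls of radius $m/2$ for every integer $m$. If $\diam(Y_k)$ were bounded by some constant $D_0$, then $B_{\rho_k}^{\R^k\times Y_k}((0^k,y_k))$ would be comparable to a ball of radius $\rho_k$ in $\R^k$ times a bounded factor, whose number of disjoint $(m/2)$-balls grows only like $C(D_0,m,N)\,\rho_k^{k}$; combined with Bishop--Gromov volume comparison applied in $\R^k\times Y_k$ (an $\RCD^*(0,N)$-type bound on the packing in the $Y_k$-direction) and with the measure convergence coming from the mGH approximation, one gets a contradiction with the $\lfloor N\rfloor=\,$essential-exponent growth from Corollary \ref{rmk:disjBalls}, once $k<\lfloor N\rfloor$. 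Hence $\diam(Y_k)$ can be made as large as we like by shrinking $\delta$.

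\textbf{Splitting off the next factor.} Once $\diam(Y_k)$ is large, choose in $Y_k$ a pair of points $p',q'$ far apart and far from $y_k$, with small excess and gradient of excess on the relevant scales (again via Theorem \ref{thm-ExcessEstV2} after rescaling so the ambient curvature parameter is tiny — here one uses that $Y_k$ is $\RCD^*(0,N-k)$, better than $\RCD^*(-\delta^{2\beta},N-k)$, so the excess estimates are even cleaner). Pulling these points (and the auxiliary midpoints $p_i+p_j$ needed in Theorem \ref{thm-AlmSplitV2}) back through the $\varepsilon_k$-mGH approximation to $\bar X$, one obtains in $\bar X$ a configuration of $k+1$ pairs of points satisfying the hypotheses of the almost splitting theorem on a ball of radius $\rho_{k+1}$ with $\rho_{k+1}$ still $\to\infty$; applying Theorem \ref{thm-AlmSplitV2} yields an $\varepsilon_{k+1}$-mGH approximation of a large ball in $\bar X$ onto a ball in $\R^{k+1}\times Y_{k+1}$ with $Y_{k+1}$ an $\RCD^*(0,N-k-1)$ space. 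Tracking the quantitative dependence of all constants on $\delta$ (using that $\varepsilon_k,\ \rho_k^{-1}$ are controlled by $\varepsilon(\delta|N)$-type functions and that the process terminates after $\lfloor N\rfloor$ steps, a fixed number depending only on $N$) closes the induction. The delicate bookkeeping is to make sure the finitely many choices of $\delta$ needed at each step can be amalgamated into a single $\delta(\varepsilon,N)$, which is routine since the number of steps is bounded by $\lfloor N\rfloor$.
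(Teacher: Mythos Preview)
Your overall strategy (induction on the number of split factors, with the diameter estimate on $Y_k$ coming from the packing argument of Corollary~\ref{rmk:disjBalls}) matches the paper's. The base case and the diameter estimate are essentially what the paper does in Corollary~\ref{cor-step1} and Lemma~\ref{lem-diamEst}.

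The genuine gap is in your paragraph \textbf{Splitting off the next factor}. You propose to pull the points $p',q'$ (and the auxiliary $p_i+p_j$) back through the $\varepsilon_k$-mGH approximation to $\bar X$ and then apply Theorem~\ref{thm-AlmSplitV2} directly in $\bar X$ with $k+1$ pairs. But the hypotheses of Theorem~\ref{thm-AlmSplitV2} are not purely metric: they include the integral gradient bounds
\[
\fint_{B_r(\bar x)} |De_{p_i,q_i}|^2\,d\mm \quad\text{and}\quad
\fint_{B_r(\bar x)}\Big|D\Big(\tfrac{\sfd^{p_i}+\sfd^{p_j}}{\sqrt 2}-\sfd^{p_i+p_j}\Big)\Big|^2\,d\mm,
\]
and an mGH approximation controls only distances and measures, not weak upper gradients. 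There is no mechanism by which smallness of these quantities in $\R^k\times Y_k$ (or in $Y_k$) transfers to $\bar X$ under a mere $\varepsilon_k$-mGH map. You could try to re-derive the gradient bound for the \emph{new} pair $(p_{k+1},q_{k+1})$ directly in $\bar X$ via Theorem~\ref{thm-ExcessEstV2} (since excess is a distance quantity and does survive the pull-back up to $\varepsilon_k$), but you have no handle at all on the cross terms encoding the orthogonality between the new direction and the old $k$ directions; those live in $W^{1,2}$ and are invisible to GH data.

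The paper sidesteps this entirely: it never re-applies Theorem~\ref{thm-AlmSplitV2} in $\bar X$ with more points. Instead, it applies the one-factor almost splitting (Proposition~\ref{prop-CorAlmostSplitting}) \emph{to $Y_k$ itself}, which is legitimate because $Y_k$ is an $\RCD^*(0,N-k)$ space with large diameter, yielding an mGH approximation $B^{Y_k}_{\varepsilon^{-1}}(y)\to B^{\R\times Y_{k+1}}_{\varepsilon^{-1}}((0,y'))$. It then \emph{composes} the two mGH approximations $\bar X\approx \R^k\times Y_k$ and $Y_k\approx \R\times Y_{k+1}$ using the elementary Propositions~\ref{prop-restrmGH} and~\ref{prop-prodmGH} in the Appendix. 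This composition only manipulates distances and measures, so no Sobolev information is needed. If you replace your pull-back step by ``apply the almost splitting inside $Y_k$ and compose mGH approximations'', your outline becomes correct and coincides with the paper's proof.
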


\begin{remark}\label{rem:thm-GH} 
From Theorem \ref{thm-GH} it directly follows that  the point $\bar x \in \bar X$ has a $\lfloor N \rfloor$-dimensional Euclidean tangent cone and it  belongs to the $\lfloor N \rfloor$-regular set $\mathcal{R}_{\lfloor N \rfloor}$. 
\\Indeed, if $(X_i, \sfd_i, \mm_i)$ is a sequence of $\RCD^*(-\delta_i^{2\beta},N)$ spaces with $\delta_i \rightarrow 0$, $\be(X_i)= \lfloor N \rfloor$, $\diam(X_i)=1$ and $\bar x_i$ are as in Theorem \ref{thm-GH}, then the covering spaces  $(\bar X_i,  \sfd_{\bar X_i}, \mm_{\bar X_i}, \bar x_i)$ converge in pointed measured Gromov-Hausdorff sense to the Euclidean space $(\R^{\lfloor N \rfloor}, \sfd_{\R^{\lfloor N \rfloor}}, \mathcal{L}^{\lfloor N \rfloor}, 0^{\lfloor N \rfloor})$. 
The claim follows by applying this observation to a sequence of blow-ups of $\bar{X}$ centred at $\bar x$.
\end{remark}

In order to prove the base case of induction $k=1$, we start by showing the almost splitting of a line for $(\bar X, \sfd_{\bar X},   \mm_{\bar X})$.  This will be a direct consequence of the next proposition, which in turn will follow by combining Theorems \ref{thm-ExcessEstV2} and \ref{thm-AlmSplitV2} with suitable blow-up arguments.

\begin{proposition}\label{prop-CorAlmostSplitting}
Fix $N \in (1, \infty)$ and $\beta > (2+\alpha) /\alpha$, where $\alpha=\alpha(N)$ is given by Theorem \ref{thm-ExcessEstV2}. For any $\varepsilon >0$ there exists $\delta_1 = \delta_1(\varepsilon,  N)>0$, $\delta_1(\varepsilon, N) \rightarrow 0$ as $\vare$ goes to zero, such that for any $\delta \in (0,\delta_1]$ the following holds. Let $(X,\sfd,\mm)$ be an $\RCD^*(-\delta^{2\beta},N)$ m.m.s. such that
$$\mbox{\emph{diam}}(X,\sfd) \geq 2\delta^{-\beta}.$$
Then there exist $x_{\vare}\in X$, a pointed $\RCD^*(0, N-1)$ metric measure space $(Y_{\vare}, \sfd_{Y_{\vare}}, \mm_{Y_{\vare}}, y_{\vare})$ such that
\begin{equation*}
\sfd_{mGH}(B^{X}_{\vare^{-1}}(x_{\vare}), B^{\R \times Y_{\vare}}_{\vare^{-1}}(0, y_{\vare})) \leq \varepsilon.
\end{equation*}
\end{proposition}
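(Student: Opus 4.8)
The plan is to apply the almost splitting theorem (Theorem \ref{thm-AlmSplitV2}) with $k=1$, so the heart of the matter is producing, inside a space of large diameter, a point $x$ together with two points $p,q$ that are far from $x$ and whose excess function $e_{p,q}$ is pointwise small and has small gradient on a large ball, all in a scale-invariant way. First I would fix $\eps>0$ and set $\delta(\eps,N)$ to be $\min\{\delta_{\mathrm{AS}}(\eps,N),1\}$, where $\delta_{\mathrm{AS}}$ is the threshold from Theorem \ref{thm-AlmSplitV2} with $\beta$ as given; I will shrink it further along the way. Given $(X,\sfd,\mm)$ an $\RCD^*(-\delta^{2\beta},N)$ space with $\diam(X,\sfd)\ge 2\delta^{-\beta}$, pick $p,q\in X$ realising (almost) the diameter, so $\sfd(p,q)\ge 2\delta^{-\beta}-o(1)$, and pick a midpoint-type point $x$ on a geodesic from $p$ to $q$ with $\sfd(p,x),\sfd(q,x)\ge \delta^{-\beta}$. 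By the triangle inequality $e_{p,q}(x)=0$; more importantly, since $x$ lies on a minimising geodesic, $e_{p,q}$ vanishes along that geodesic and is controlled near $x$.

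The key step is the scale-invariant excess estimate. After rescaling the metric by $\sfd(p,q)^{-1}$ (which changes the curvature bound to $-\delta^{2\beta}\sfd(p,q)^2$, still small since $\sfd(p,q)$ and $\delta^{-\beta}$ are comparable — one must check $\delta^{2\beta}\sfd(p,q)^2\le \delta^{2\beta-2}\cdot(\text{const})$, which tends to $0$ as $\delta\to0$ because $\beta>1$), the points $p,q$ are at distance $1$ and $x$ lies in an annulus $A_{\eps_0,2}(\{p,q\})$ for a fixed $\eps_0$ depending only on how we chose $x$. Apply Theorem \ref{thm-ExcessEstV2}: since $e_{p,q}(x)=0\le r^2$ for every $r$, parts (ii) and (iii) give, for a suitable radius $r=r(\delta)$ (e.g. $r=\delta^{-\beta+1}/\sfd(p,q)$ in rescaled units, chosen so that $r\to0$ and $r\cdot\sfd(p,q)\to\infty$ as $\delta\to0$, which is where $\beta>(2+\alpha)/\alpha$ enters to make the powers $r^{1+\alpha}$ beat $\delta$),
\begin{equation*}
\sup_{B_{r}(x)}e_{p,q}\le C(K,N,\eps_0)\,r^{1+\alpha},\qquad \fint_{B_{r}(x)}|De_{p,q}|^2\,\d\mm\le C(K,N,\eps_0)\,r^{1+\alpha}.
\end{equation*}
A blow-up (dilating by $r^{-1}$ around $x$) converts these into the hypothesis (ii) of Theorem \ref{thm-AlmSplitV2}: on the rescaled space, the relevant sup and average of $e_{p,q}$ and $|De_{p,q}|^2$ over balls $B_\rho(x)$, $\rho\in[1,\delta^{-1}]$, are bounded by $C\,r^{1+\alpha}\cdot(\text{harmless powers of }\rho)$, which is $\le\delta$ once $\delta$ is small; and the distance conditions $\sfd(p,x),\sfd(q,x)\ge\delta^{-\beta}$ hold in the rescaled metric by construction. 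Since $k=1$ there is no "$p_i+p_j$" term to worry about. Theorem \ref{thm-AlmSplitV2} then yields a pointed $\RCD^*(0,N-1)$ space $(Y_\eps,\sfd_{Y_\eps},\mm_{Y_\eps},y_\eps)$ with $\sfd_{mGH}(B^X_{\eps^{-1}}(x_\eps),B^{\R\times Y_\eps}_{\eps^{-1}}(0,y_\eps))\le\eps$, where $x_\eps=x$ in the appropriately rescaled metric.

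The main obstacle I anticipate is bookkeeping the interlocking scales: one must choose the intermediate radius $r$ (equivalently the final blow-up factor) so that simultaneously (a) $r\to0$, so the excess estimates of Theorem \ref{thm-ExcessEstV2} kick in and are small; (b) after rescaling by $r^{-1}$, the points $p,q$ recede to distance $\ge\delta^{-\beta}$ from $x$, so hypothesis (ii) of Theorem \ref{thm-AlmSplitV2} on the range $\rho\in[1,\delta^{-1}]$ is met and stays inside the annulus where the gradient estimate \eqref{gradientEst2} is valid; and (c) the curvature, which scales like (original bound)$\times r^{-2}$, remains $\le\delta^{2\beta}$ after all rescalings. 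Reconciling (a)–(c) is exactly what forces the constraint $\beta>(2+\alpha)/\alpha$: the excess decays like $r^{1+\alpha}$ while one needs to win against factors that are polynomial in $\delta^{-\beta}$, and the stated lower bound on $\beta$ is precisely the margin that makes $r^{1+\alpha}\cdot\delta^{-(\text{stuff})\beta}\to0$. The statement $\delta_1(\eps,N)\to0$ as $\eps\to0$ is automatic since we only ever shrink $\delta_1$ as $\eps$ shrinks. Once the scales are pinned down, the argument is a direct concatenation of Theorems \ref{thm-ExcessEstV2} and \ref{thm-AlmSplitV2} with two rescalings in between.
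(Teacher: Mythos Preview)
Your approach is essentially the same as the paper's: rescale so that $\sfd(p,q)=1$, apply the excess estimates of Theorem \ref{thm-ExcessEstV2} at the midpoint, then feed the resulting smallness of $e_{p,q}$ and $|De_{p,q}|^2$ into the $k=1$ case of Theorem \ref{thm-AlmSplitV2}. Two minor points where the paper is cleaner. First, do not pick $p,q$ ``realising (almost) the diameter'': the hypothesis is only $\diam(X)\ge 2\delta^{-\beta}$, so the diameter could be infinite; instead choose $p,q$ with $\sfd(p,q)=2\delta^{-\beta}$ exactly, which makes the rescaled curvature bound a fixed constant $-4$ rather than something you have to argue is ``comparable''. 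Second, your second blow-up by $r^{-1}$ is unnecessary: after scaling the excess estimates back to the \emph{original} metric, the distance condition $\sfd(p,x),\sfd(q,x)\ge\delta^{-\beta}$ of Theorem \ref{thm-AlmSplitV2} already holds by construction, so the almost splitting theorem applies directly there---your conditions (a)--(c) then collapse to the two elementary inequalities $\delta^{\alpha\beta-2-\alpha}<1/(2C)$ and $\delta^{(1+\alpha)(\beta-1)-1}<1/(2C)$, the first of which is exactly where $\beta>(2+\alpha)/\alpha$ enters.
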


\begin{proof} 
 Let $(X,\sfd,\mm)$ be an $\RCD^*(-\delta^{2\beta},N)$ space.  Because of the assumption on the diameter, there exist points $p,q \in X$ such that $\sfd(p,q)=2\delta^{-\beta}$. Define $x_{\vare}$ as the midpoint of a geodesic connecting $p$ and $q$. Consider the rescaled metric $ \sfd_{\delta}=(\delta^{\beta}/2)\, \sfd$. Since $X$ is an $\RCD^*(-\delta^{2\beta},N)$ space, $ (X,  \sfd_{\delta}, \mm)$ is an $\RCD^*(-4,N)$ space. {Observe that $\sfd_\delta(p,q)=1$. With respect to the metric $\sfd_{\delta}$, we have that  $x_{\varepsilon} \in A_{1/4, 2} (\{p,q\})$ }and $e^{\delta}_{p,q}(x_{\vare})=0$. 
 
 \textbf{Step 1}: Estimate on the $\sup$ of the excess.
\\ We can apply Theorem  \ref{thm-ExcessEstV2} and infer that there exist $\bar r=\bar r(N)>0, C=C(N)>0, \alpha = \alpha(N) \in (0,1)$ such that the estimate \eqref{AbGr2} centred at $x_{\eps}$ holds for $\sfd_{\delta}$.  By scaling back to the metric $\sfd$, such an estimate can be written as follows:  
\begin{equation}
\label{eq-est1-scale}
\sup_{y \in B_{ r }(x_{\vare})} e_{p,q}(y) \leq  C(N)\,   \delta^{\alpha \, 
\beta}\,   r^{1+\alpha}, \quad \text{for all } r \in (0, 2\delta^{-\beta} \bar r(N)].
\end{equation}
We aim to choose $\delta>0$ such that \eqref{eq-est1-scale} can be turned into the following: 
\begin{equation}
\label{eq-est1-step1}
\sup_{y \in B_r(x_\vare)} e_{p,q}(y) < \delta/2, \quad \text{for all } r \in [1, \delta^{-1}].
\end{equation}
Hence we first require
\begin{equation}
\label{cond1-step1}
\tag{a}
1 \leq   \delta^{-1}  \leq  2 \delta^{-\beta} \bar r(N),
\end{equation}
so that \eqref{eq-est1-scale} applies to all radii $r\in [1, \delta^{-1}]$. Secondly, we need
$$ C(N) \, \delta^{\alpha\, \beta}  (\delta^{-1})^{1+\alpha} < \delta/2,$$
so the right-hand side of \eqref{eq-est1-scale}  is bounded above by $\delta/2$. That means
\begin{equation}
\label{cond2-step1}
\tag{b}
\delta ^{\beta \alpha - 2-\alpha} < 1/(2C(N)).
\end{equation}

Such a choice is possible since the assumption $\beta > (2+\alpha)/\alpha$ ensures that the exponent on the left-hand side is strictly positive.
By choosing $\delta>0$ sufficiently small so  that both conditions \eqref{cond1-step1} and \eqref{cond2-step1} are satisfied, we obtain from \eqref{eq-est1-scale} that  estimate \eqref{eq-est1-step1} holds.

 \textbf{Step 2}: $L^{2}$-estimate on the gradient of the excess.
 \\Consider again the rescaled metric $ \sfd_{\delta}=(\delta^{\beta}/2)\, \sfd$ and choose $r>0$ so that  $ r \leq \min \left\{ \bar  r(N), 1/4 \right\}$.
Then $B^{X}_{2r} (x_{\vare}) \subset   A_{1/4,  2} (\{p,q\})$ and  estimate \eqref{gradientEst2} of Theorem \ref{thm-ExcessEstV2} holds as well.  By scaling back to the metric $\sfd$, we have:
\begin{equation}
\label{eq-est2-scale}
\fint_{B^{X}_{r}(x_{\vare})}|De_{p,q}|^2 d\mm    \leq  C(N)  \, \delta^{\beta (1+\alpha)}   r^{1+\alpha}, \quad \text{for all } r \leq 2 \delta^{-\beta} \min\left\{  \bar r(N), 1/4\right\}.
\end{equation}
As in step 1, we  aim to choose $\delta>0$ so that \eqref{eq-est2-scale} implies the following:  
\begin{equation}
\label{eq-est2-step1}
\fint_{B^{X}_{r }(x_{\vare})}|De_{p,q}|^2d\mm  \leq \delta/2, \quad \text{for all } r\in[1, \delta^{-1}].
\end{equation}
Hence we require
\begin{equation}
\label{cond3-step1}
\tag{c}
1 \leq   \delta^{-1}  \leq  2 \delta^{-\beta} \min \left\{  \bar  r(N), 1/4 \right\}. 
\end{equation}
In order for the right-hand side of \eqref{eq-est2-scale} to be less than or equal to $\delta/2$ we need 
$$ C(N) \delta^{\beta (1+\alpha)} \delta^{-(1+\alpha)}< \delta/2,$$
that is, 
\begin{equation}
\label{cond4-step1}
\tag{d}
\delta^{(1+\alpha)(\beta -1) -1} < 1/(2C(N)).
\end{equation}
Note that since $\beta >2$, the exponent on the left-hand side is strictly positive. 

 \textbf{Step 3}: Conclusion.  
\\Fix $\delta_0=\delta_0(N)>0$  satisfying inequalities \eqref{cond1-step1}, \eqref{cond2-step1}, \eqref{cond3-step1} and \eqref{cond4-step1}. 
Then for all  $\delta \in (0,\delta_0(N)]$  inequalities \eqref{cond1-step1}, \eqref{cond2-step1}, \eqref{cond3-step1} and \eqref{cond4-step1} hold as well. 
Now, for any $\varepsilon >0$ let $\delta(\vare, N)>0$ be as in Theorem \ref{thm-AlmSplitV2}. 
We define 
$$ \delta_1=\delta_1(\vare, N)=\min\{\delta_0(N), \delta(\vare,N)\}.$$
Then, for all $\delta \in (0,\delta_1]$ inequalities \eqref{eq-est1-step1} and \eqref{eq-est2-step1} hold and we have
$$ \sup_{y \in B^{X}_r(x_{\vare})} e_{p,q}(y) + \fint_{B^{X}_{r }(x_{\vare})}|De_{p,q}|^2 d\mm    \leq \delta, \quad \text{for all } r \in [1, \delta^{-1}].$$
Since $\delta \leq \delta_1 \leq \delta(\vare,N)$, we can apply Theorem \ref{thm-AlmSplitV2} and conclude the proof.
\end{proof}

Proposition \ref{prop-CorAlmostSplitting} can be in particular applied to the covering space $(\bar X, \sfd_{\bar X}, \mm_{\bar X})$. Indeed, thanks to Corollary \ref{lem-barXRCD} it is an $\RCD^*$ space and it is not compact (thus it must have infinite diameter, since it is proper), as it was pointed out in Remark \ref{rem-Xbar-noncompact}.
 This gives the base case of induction, $k=1$. 
 
\begin{corollary}\label{cor-step1}
Let $(X,\sfd,\mm)$ and  $(\bar X, \sfd_{\bar X}, \mm_{\bar X})$  be as in Theorem \ref{thm-GH}.
Then there exist $\bar x_{1, \vare}\in \bar X $ 
and a pointed $\RCD^*(0,N-1)$ space $(Y_{1, \vare}, \sfd_{Y_{1, \vare}}, m_{Y_{1, \vare}}, y_{1, \vare})$ such that
\begin{equation*}
\sfd_{mGH}(B^{\bar X}_{ \vare^{-1}}(\bar x_{1, \vare}), B^{\R \times Y_{1, \vare}}_{ \vare^{-1}}(0, y_{1, \vare})) \leq \varepsilon.
\end{equation*}
\end{corollary}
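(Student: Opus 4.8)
The plan is to deduce Corollary~\ref{cor-step1} directly from Proposition~\ref{prop-CorAlmostSplitting} applied to the covering space $(\bar X, \sfd_{\bar X}, \mm_{\bar X})$, so the work is essentially to verify that $\bar X$ satisfies the hypotheses of the Proposition for the relevant range of $\delta$. First I would recall that, by hypothesis in Theorem~\ref{thm-GH}, $(X,\sfd,\mm)$ is an $\RCD^*(-\delta^{2\beta},N)$ space with $\delta\in(0,\delta(\vare,N)]$, $\diam(X)=1$ and $\be(X)=\lfloor N\rfloor$. By Corollary~\ref{lem-barXRCD} the quotient $(\bar X,\sfd_{\bar X},\mm_{\bar X})=(\widetilde X/H,\cdot,\cdot)$ (with $H=[\bar\pi_1(X),\bar\pi_1(X)]$) is again an $\RCD^*(-\delta^{2\beta},N)$ space, since $H$ is a normal subgroup of $\bar\pi_1(X)$.

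Next I would check the diameter condition $\diam(\bar X,\sfd_{\bar X})\ge 2\delta^{-\beta}$ needed in Proposition~\ref{prop-CorAlmostSplitting}. As noted in Remark~\ref{rem-Xbar-noncompact}, $\bar X$ is not compact: the orbit $\Gamma'\bar x$ of the subgroup $\Gamma'\cong\mathbb Z^{\be(X)}$ given by Lemma~\ref{lem-Gam'} (with $\be(X)=\lfloor N\rfloor\ge 1$, since we are in the case $N\in(1,\infty)$) is an infinite discrete subset of $\bar X$, so by properness (Remark~\ref{rem:ProperGeod}, since $\RCD^*$ spaces are proper) $\bar X$ must have infinite diameter. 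Concretely one can invoke Corollary~\ref{rmk:disjBalls}: since $\diam(X)=1$ and $\be(X)=\lfloor N\rfloor\ge 1$, for every $R>1$ the ball $B_R(\bar x)$ contains at least $\lfloor R\rfloor$ disjoint balls of radius $1/2$, hence $\bar X$ cannot be bounded. In particular $\diam(\bar X,\sfd_{\bar X})=+\infty\ge 2\delta^{-\beta}$ for every $\delta>0$, so the diameter hypothesis of Proposition~\ref{prop-CorAlmostSplitting} is satisfied.

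Finally I would fix the constant: set $\delta_1=\delta_1(\vare,N)$ to be the threshold produced by Proposition~\ref{prop-CorAlmostSplitting}, and take $\delta(\vare,N)$ in Theorem~\ref{thm-GH} to be $\le\delta_1(\vare,N)$ (this is part of building up the $\delta(\vare,N)$ of Theorem~\ref{thm-GH}; the induction will impose finitely many further constraints in later steps, and one takes the minimum at the end). Then for $\delta\in(0,\delta(\vare,N)]\subset(0,\delta_1]$, applying Proposition~\ref{prop-CorAlmostSplitting} to $(\bar X,\sfd_{\bar X},\mm_{\bar X})$ yields a point $x_\vare\in\bar X$ and a pointed $\RCD^*(0,N-1)$ space $(Y_\vare,\sfd_{Y_\vare},\mm_{Y_\vare},y_\vare)$ with $\sfd_{mGH}(B^{\bar X}_{\vare^{-1}}(x_\vare),B^{\R\times Y_\vare}_{\vare^{-1}}(0,y_\vare))\le\vare$; relabelling $x_\vare$ as $\bar x_{1,\vare}$ and $(Y_\vare,y_\vare)$ as $(Y_{1,\vare},y_{1,\vare})$ gives exactly the statement. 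There is no real obstacle here: the only thing to be careful about is the bookkeeping of the $\delta$-thresholds (making sure the $\delta(\vare,N)$ of Theorem~\ref{thm-GH}, once all induction steps are accounted for, is $\le\delta_1$) and confirming that $\be(X)=\lfloor N\rfloor\ge 1$ is genuinely used only to guarantee non-compactness of $\bar X$ — which it is, via Remark~\ref{rem-Xbar-noncompact} or Corollary~\ref{rmk:disjBalls}.
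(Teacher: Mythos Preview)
Your proposal is correct and follows essentially the same approach as the paper: apply Proposition~\ref{prop-CorAlmostSplitting} to $(\bar X,\sfd_{\bar X},\mm_{\bar X})$, using Corollary~\ref{lem-barXRCD} to get the $\RCD^*(-\delta^{2\beta},N)$ condition on $\bar X$ and Remark~\ref{rem-Xbar-noncompact} (non-compactness plus properness) to get infinite diameter. The paper's own justification is a single paragraph preceding the corollary, and your write-up simply makes the bookkeeping of the $\delta$-threshold a bit more explicit.
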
   

Observe that for the base case of induction (i.e. in Corollary \ref{cor-step1}),  we did not use the assumptions on the diameter and revised first Betti number. These assumptions will play a key role in the following, instead. 
Let us state the induction hypothesis.  

\medskip

\noindent {\bf Assumption $\bf A_k$}: Fix $N\in (1,\infty)$ and let $k\in \N$ with $k< \lfloor N \rfloor$. For all $\eta \in (0,1)$  there exists $\delta_k=\delta_k(\eta, N)>0$ such that for all $\delta \in (0, \delta_k]$, the following holds:  if $(X,\sfd,\mm)$ and  $(\bar X, \sfd_{\bar X}, \mm_{\bar X})$  are as in Theorem \ref{thm-GH}, then there exists $\bar x_{k, \eta} \in \bar X$ and a pointed $\RCD^*(0,N-k)$ space $(Y_{k, \eta},\sfd_{Y_{k, \eta}},m_{Y_{k, \eta}}, y_{k, \eta})$ such that
\begin{equation*}
\label{inductionSplit}
\sfd_{mGH}(B^{\bar X}_{ \eta^{-1}}(\bar x_{k,  \eta}), B^{\R^k \times Y_{k, \eta}}_{ \eta^{-1}}(0^k, y_{k, \eta})) \leq \eta.
\end{equation*}

In order to prove $\bf A_{k+1}$ given $\bf A_{k}$ we aim to apply Proposition \ref{prop-CorAlmostSplitting} to the space $Y_{k,\eta}$:  in this way, $Y_{k,\eta}$ will almost split a line, thus $\bar X$ will almost split an additional Euclidean factor, yielding $\bf A_{k+1}$. To this aim, the following diameter estimate will be key.

\begin{lemma}\label{lem-diamEst}
Assume that $\bf A_{k}$ holds. For any $\eta \in (0,1)$, let $\delta_k(\eta, N)>0$, $(X,\sfd,\mm)$ and  $(\bar X, \sfd_{\bar X}, \mm_{\bar X})$ be as in $\bf A_{k}$  and $(Y_{k, \eta},\sfd_{Y_{k, \eta}},m_{Y_{k, \eta}}, y_{k, \eta})$ be the corresponding $\RCD^*(0, N-k)$ p.m.m.s..
\\Then there exist $c_{N}\in (0,1)$ and $\eta_0(N)>0$ such that for all $\eta \in (0,\eta_0(N)]$ and for all $\delta \in (0, \delta_k(\eta, N)]$, it holds:
\begin{equation*}
\mbox{\emph{diam}}(B^{Y_{k, \eta}}_{ \eta^{-1}}(y_{k, \eta}))>  c_{N} \eta^{-1}. 
\end{equation*}
\end{lemma}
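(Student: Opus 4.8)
The plan is to argue by contradiction, playing the fast volume growth coming from the $\mathbb{Z}^{\lfloor N\rfloor}$-action furnished by Lemma \ref{lem-Gam'} against the much slower growth that a small diameter of $B^{Y_{k,\eta}}_{\eta^{-1}}(y_{k,\eta})$ would force through ${\bf A}_k$. Fix a constant $c_N\in(0,1)$, to be chosen small at the very end depending only on $N$, and set $D:=c_N\eta^{-1}$. Suppose that, for some $\eta\le\eta_0(N)$ (also fixed at the end) and some $\delta\le\delta_k(\eta,N)$, there are $(X,\sfd,\mm)$, its covering $(\bar X,\sfd_{\bar X},\mm_{\bar X})$, a point $\bar x:=\bar x_{k,\eta}$ and a pointed $\RCD^*(0,N-k)$ space $(Y,\sfd_Y,\mm_Y,y)$, $y:=y_{k,\eta}$, as in ${\bf A}_k$, with $\sfd_{mGH}\big(B^{\bar X}_{\eta^{-1}}(\bar x),B^{\R^k\times Y}_{\eta^{-1}}(0^k,y)\big)\le\eta$, but $\diam\big(B^{Y}_{\eta^{-1}}(y)\big)\le D$. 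Note $Y$ is also $\RCD^*(0,N)$, so Bishop--Gromov (Theorem \ref{thm-BishopGromov}) is available in $Y$, and $D<\eta^{-1}$. Normalise $\mm_{\bar X}$ so that $\mm_{\bar X}\big(B^{\bar X}_D(\bar x)\big)=1$ (legitimate, as $0<\mm_{\bar X}(B^{\bar X}_D(\bar x))<\infty$).

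First I would establish an \emph{upper} bound $\mm_{\bar X}\big(B^{\bar X}_{\eta^{-1}/2}(\bar x)\big)\le C(N)\,c_N^{-k}$. The mGH-closeness transfers volumes of concentric balls of radius $r$, for $r$ up to essentially $\eta^{-1}$, between $\bar X$ and $\R^k\times Y$, up to shifting $r$ by $\eta$ and an error vanishing with $\eta$ (a routine consequence of measured GH-approximations — exactly what underlies volume convergence — and harmless here thanks to the normalisation). Applied at radius $\simeq D$, together with $B^{\R^k\times Y}_\rho(0^k,y)\supseteq B^{\R^k}_{\rho/\sqrt2}(0^k)\times B^Y_{\rho/\sqrt2}(y)$ and Bishop--Gromov in $Y$, it yields $\mm_Y\big(B^Y_{D/4}(y)\big)\le C(N)D^{-k}$; since $B^Y_{\eta^{-1}}(y)$ has diameter $\le D$ it lies in $B^Y_{2D}(y)$, so one more Bishop--Gromov in $Y$ gives $\mm_Y\big(B^Y_{\eta^{-1}}(y)\big)\le C(N)D^{-k}$. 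Applied at radius $\simeq\eta^{-1}/2$ with $B^{\R^k\times Y}_\rho(0^k,y)\subseteq B^{\R^k}_\rho(0^k)\times B^Y_\rho(y)$, it then gives $\mm_{\bar X}\big(B^{\bar X}_{\eta^{-1}/2}(\bar x)\big)\le \mathcal{L}^k(B_1(0^k))\,\eta^{-k}\,\mm_Y\big(B^Y_{\eta^{-1}}(y)\big)+o_\eta(1)\le C(N)\,c_N^{-k}$, using $\eta^{-k}D^{-k}=c_N^{-k}$. The key point is that the exponent is exactly $k$: over the scale range $[D,\eta^{-1}/2]$ the $Y$-factor is already saturated (its ball of radius $\eta^{-1}$ has diameter only $D$), so all the growth is Euclidean.

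On the other hand I would produce a matching \emph{lower} bound with the larger exponent $\lfloor N\rfloor$. Applying Lemma \ref{lem-Gam'} to $\bar X$ with integer parameter $\ell:=\lceil 2D\rceil$ gives $\Gamma'=\langle\gamma_1,\dots,\gamma_{\lfloor N\rfloor}\rangle\cong\mathbb{Z}^{\lfloor N\rfloor}$ inside $\Gamma$, acting on $\bar X$ by $\mm_{\bar X}$-preserving isometries (deck transformations), with $\sfd_{\bar X}(\gamma\bar x,\bar x)>\ell\ge2D$ for all nontrivial $\gamma\in\Gamma'$ and $\sfd_{\bar X}(\gamma_j\bar x,\bar x)\le2\ell\le6D$ for each $j$ (here $\diam(X)=1$). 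Hence the balls $B^{\bar X}_D(\gamma\bar x)$, $\gamma\in\Gamma'$, are pairwise disjoint and each has $\mm_{\bar X}$-measure $1$. For $\gamma=\gamma_1^{m_1}\cdots\gamma_{\lfloor N\rfloor}^{m_{\lfloor N\rfloor}}$ with $|m_j|\le s$, where $s:=\lfloor(14\lfloor N\rfloor c_N)^{-1}\rfloor$, one has $\sfd_{\bar X}(\gamma\bar x,\bar x)\le6\lfloor N\rfloor sD\le\eta^{-1}/2-D$, so all of these $(2s+1)^{\lfloor N\rfloor}\ge(28\lfloor N\rfloor c_N)^{-\lfloor N\rfloor}$ disjoint unit-measure balls lie inside $B^{\bar X}_{\eta^{-1}/2}(\bar x)$, giving $\mm_{\bar X}\big(B^{\bar X}_{\eta^{-1}/2}(\bar x)\big)\ge(28\lfloor N\rfloor c_N)^{-\lfloor N\rfloor}$. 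Comparing with the upper bound yields $(28\lfloor N\rfloor c_N)^{-\lfloor N\rfloor}\le C(N)c_N^{-k}$, i.e. $c_N^{-(\lfloor N\rfloor-k)}\le C'(N)$ with $C'(N)$ depending only on $N$; since $k<\lfloor N\rfloor$ we have $\lfloor N\rfloor-k\ge1$, and as $c_N<1$ this forces $c_N^{-1}\le C'(N)$. It then suffices to fix $c_N:=\tfrac12\min\{C'(N)^{-1},(28\lfloor N\rfloor)^{-1}\}$ at the outset (so $c_N^{-1}>C'(N)$, a contradiction), and then $\eta_0(N)$ small enough that the finitely many smallness requirements above — $D\ge1$, $6\lfloor N\rfloor sD+D\le\eta^{-1}/2$, and the thresholds below which the volume-transfer errors are negligible — hold for all $\eta\le\eta_0(N)$. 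This proves the lemma.

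The only genuinely technical ingredient is the quantitative comparison of ball volumes in $\bar X$ and in $\R^k\times Y$ under the measured mGH-approximation provided by ${\bf A}_k$; this is standard, and normalising by $\mm_{\bar X}(B^{\bar X}_D(\bar x))=1$ keeps all error terms small relative to the quantities being estimated. Conceptually, the main obstacle is choosing the right pair of scales: the inner scale $D=c_N\eta^{-1}$, at which the $Y$-direction has exhausted its diameter, and the outer scale $\eta^{-1}/2$, still well inside the region controlled by ${\bf A}_k$. Between these scales, balls in $\bar X$ can grow only with the Euclidean exponent $k$, which is incompatible with the exponent $\lfloor N\rfloor$ imposed by the $\mathbb{Z}^{\lfloor N\rfloor}$-action — and it is exactly the strict inequality $\be(X)=\lfloor N\rfloor\ge k+1$ (so $\lfloor N\rfloor-k\ge1$) that makes the contradiction close for $c_N$ small.
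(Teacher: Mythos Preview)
Your argument is correct in spirit and reaches the conclusion, but it takes a genuinely different route from the paper's proof. The paper argues by contradiction with a \emph{sequence} $\eta_i\downarrow 0$: it invokes Corollary~\ref{rmk:disjBalls} (with parameter $k=1$) to produce $\lfloor\eta_i^{-1}\rfloor^{\lfloor N\rfloor}$ disjoint balls of radius $1/2$ in $B^{\bar X_i}_{\eta_i^{-1}}(\bar x_i)$, transfers these disjoint balls to $\R^k\times Y_i$ via the GH approximation (no measures needed for this step), rescales by $\eta_i$ so that the collapsed $Y_i$-factor disappears in the limit, and then reads off a contradiction from Lebesgue measure in $\R^k$. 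Your argument instead fixes $\eta$ and compares \emph{volumes} directly: an upper bound on $\mm_{\bar X}(B_{\eta^{-1}/2}(\bar x))/\mm_{\bar X}(B_D(\bar x))$ of order $c_N^{-k}$ coming from the $\R^k\times Y$-structure (the $Y$-factor being saturated), versus a lower bound of order $c_N^{-\lfloor N\rfloor}$ from Lemma~\ref{lem-Gam'} applied with integer parameter $\ell=\lceil 2D\rceil$.

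What each buys: the paper's ball-counting transfers cleanly under plain GH approximation, so one never needs to compare measures between $\bar X$ and $\R^k\times Y$. Your approach is more quantitative and conceptually transparent (it identifies the competing growth exponents $k$ versus $\lfloor N\rfloor$ explicitly), but it leans on the step you flag as ``the only genuinely technical ingredient'': a quantitative comparison of ball volumes under an $\eta$-mGH approximation. As written in this paper, $\sfd_{mGH}$ is defined only through convergence of pushforward measures along a sequence, so for a single $\eta$ there is no a~priori quantitative bound; to make your argument fully rigorous you would either (i) pass to a contradicting sequence $\eta_i\to 0$ and use weak convergence of measures (at which point the argument becomes close in structure to the paper's), or (ii) invoke a metrization of mGH (e.g.\ via Prokhorov distance) together with Bishop--Gromov to control the additive error relative to your normalisation $\mm_{\bar X}(B_D(\bar x))=1$. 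Either route works; it is worth noting that the paper's ball-counting sidesteps the issue entirely.
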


\begin{proof} 
We argue by contradiction. Assume there exist a sequence $\eta_i \downarrow 0$, corresponding $\delta_i=\delta_k(\eta_i, N) \rightarrow 0$ and pointed $\RCD^*(-\delta_i^{2\beta}, N)$ spaces $(\bar X_i,\sfd_{\bar X_{i}}, \mm_{\bar X_{i}}, \bar x_i)$ for which there exists pointed $\RCD^*(0,N-k)$ spaces $(Y_i, \sfd_{Y_i}, \mm_{Y_i},y_i)$ such that
\begin{equation}
\label{eq:Claim12}
\sfd_{mGH}(B^{\bar X_{i}}_{\eta_{i}^{-1}}(\bar{x_i}), B_{\eta_i^{-1}}^{\R^k\times Y_i}((0^k,y_i))) \leq \eta_i  \quad \mbox{ and } \lim_{i \rightarrow +\infty}   {\eta_i}  
 \diam(B^{Y_{i}}_{\eta_i^{-1}}( y_i))=0.
\end{equation}
Let $i$ be sufficiently large so that $\eta_i < 1$. By Corollary \ref{rmk:disjBalls} we know that $B^{\bar X_{i}}_{\eta_i^{-1}}(\bar{x}_i)$ contains at least $(  \lfloor \eta_i^{-1} \rfloor)^b$ disjoint balls of radius $1/2$, at positive mutual distance. Using \eqref{eq:Claim12} we infer that,  for $i$ large enough, the ball $B_{\eta_i^{-1}}^{\R^k\times Y_i}((0^k,y_i))$ in $\R^k \times Y_i$ also contains at least $(  \lfloor  \eta_i^{-1} \rfloor)^b$ disjoint balls of radius $1/2$.

Rescale the metric of $\mathbb R^k \times Y_i$ by a factor ${\eta_i}$ and denote the resulting space as $(\mathbb R^k \times Y_i)^{{\eta_i }}$. Then for  large enough $i$ the ball of radius 1 in $(\R^k \times Y_i)^{{\eta_i}}$ centred at $(0^k,y_i)$ contains at least $(\  \lfloor  \eta_i^{-1} \rfloor )^b$ disjoint balls of radius $ \tfrac {\eta_i} 2$ at positive mutual distance. Furthermore, since $ {\eta_i}  \diam(B^{Y_{i}}_{\eta_i^{-1}}(y_i))$ tends to zero as $i$ tends to infinity, when taking the Gromov-Hausdorff limit of such balls we obtain: 
\begin{equation*}
\lim_{i  \to \infty}\sfd_{GH}(B_1^ {(\R^k  \times Y_i)^{{\eta_i}}}(0^k,y_i), B_1^ {(\R^k)^{ {\eta_i}}}   (0))=0.
\end{equation*}
As a consequence, for large enough $i$,  $B_1^ {(\R^k)^{\eta_i}}(0)$ contains at least $(  \lfloor  \eta_i^{-1} \rfloor )^b$ disjoint balls of radius $\tfrac {\eta_i} 2$. Denote by $\omega_k$ the volume of $B_1^{\R^k}(0)$. Since we only rescaled the metric of $\mathbb R^k \times Y_i$ by a factor ${\eta_i}$, the mass of $B_1^{(\R^k)^{\eta_i }}(0)$ equals $\omega_k (\eta_i^{-1})^{k}$ and the mass of a ball of radius $\tfrac {\eta_i} 2$ in  $B_1^{(\R^k)^{\eta_i }}(0)$ equals $\omega_k (  1/2)^{k}$.  Hence, 
\begin{equation}\label{eq:contrlemdiamAk}
\omega_k  (\lfloor  \eta_i^{-1} \rfloor)^b (1/2)^{k} \leq \omega_k (\eta_i^{-1})^{k}. 
\end{equation}
However, since $1 \leq k < \lfloor N\rfloor=b$ and $\eta_i \to 0$, the estimate \eqref{eq:contrlemdiamAk} cannot hold for $i$ sufficiently large. 
\end{proof}

\begin{remark}
Notice that we used the hypotheses $\diam(X)=1$ and $b:=\be(X)=\lfloor N\rfloor$ to have a given number of disjoint balls of radius $1/2$ in a ball in $\bar X$ of radius larger than 1.
\end{remark}

We next combine Lemma \ref{lem-diamEst} and Proposition \ref{prop-CorAlmostSplitting} in order to prove that the space $Y_{k,\eta}$ almost splits a line, for $\eta>0$ small enough depending on $\eps>0$.

\begin{proposition}\label{prop-AlmostSplittingY}
Assume that $\bf A_{k}$ is satisfied. Then for any $\vare \in (0,1)$ there exists $\eta(\vare,N)>0$ such that the following holds. For any $\eta \in (0, \eta(\vare,N)]$, let  $(Y_{k, \eta},\sfd_{Y_{k, \eta}},\mm_{Y_{k, \eta}}, y_{k, \eta})$  be the pointed m.m.s. given by $\bf A_k$. Then there exist $y\in B^{Y_{k, \eta}}_{\eta^{-1}/2}(y_{k,\eta})$ and a pointed $\RCD^*(0, N-k-1)$ space $(Y', \sfd_{Y'}, \mm_{Y'}, y')$ such that 
\begin{equation*}
B^{Y_{k, \eta}}_{\vare^{-1}}(y) \subseteq B^{Y_{k, \eta}}_{\eta^{-1}}(y_{k,\eta}) \,  \text{  and  }  \, \sfd_{mGH}(B^{Y_{k, \eta}}_{\vare^{-1}}(y), B^{\R \times Y'}_{\vare^{-1}}(0,y'))\leq \vare.
\end{equation*}
\end{proposition}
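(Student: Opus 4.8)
The plan is to run the line–splitting mechanism behind Proposition~\ref{prop-CorAlmostSplitting} on the space $Y:=Y_{k,\eta}$, which is $\RCD^*(0,N-k)$ (hence $\RCD^*(-\tilde\delta^{2\beta},N-k)$ for every $\tilde\delta>0$, and proper and geodesic by Remark~\ref{rem:ProperGeod}); the one new point is that the reference points $p,q$ must be chosen so that their midpoint sits deep inside $B^{Y}_{\eta^{-1}}(y_{k,\eta})$, and this is exactly where Lemma~\ref{lem-diamEst} is used. For $\eta\le\eta_0(N)$ and $\delta\le\delta_k(\eta,N)$ that lemma gives $\diam\big(B^{Y}_{\eta^{-1}}(y_{k,\eta})\big)>c_N\eta^{-1}$, so there are $a,b\in B^{Y}_{\eta^{-1}}(y_{k,\eta})$ with $\sfd_Y(a,b)>c_N\eta^{-1}$, and by the triangle inequality one of them, say $a$, satisfies $c_N\eta^{-1}/2<\sfd_Y(a,y_{k,\eta})<\eta^{-1}$. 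Let $\gamma\colon[0,L]\to Y$ be a unit-speed minimizing geodesic from $y_{k,\eta}$ to $a$, so $L\in(c_N\eta^{-1}/2,\eta^{-1})$; fix $\tilde\delta:=\delta_1(\vare,N-k)$, the threshold of Proposition~\ref{prop-CorAlmostSplitting}, and put
\begin{equation*}
\rho:=\tilde\delta^{-\beta},\qquad p:=\gamma\big(\tfrac{L}{2}-\rho\big),\qquad q:=\gamma\big(\tfrac{L}{2}+\rho\big),\qquad y:=\gamma\big(\tfrac{L}{2}\big).
\end{equation*}
If $\eta\le\tfrac14 c_N\tilde\delta^{\beta}$ then $\rho\le L/2$, so $p,q,y$ are well-defined points on $\gamma$ with $\sfd_Y(p,q)=2\tilde\delta^{-\beta}$ and $e_{p,q}(y)=0$ (the restriction $\gamma|_{[\frac L2-\rho,\frac L2+\rho]}$ being a minimizing geodesic).

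With these $p,q$ and midpoint $y$, Steps~1--3 of the proof of Proposition~\ref{prop-CorAlmostSplitting} apply verbatim to $(Y,\sfd_Y,\mm_Y)$ with the dimension $N$ replaced by $N-k$ (for which $\beta$ is still admissible): after rescaling by $\tilde\delta^{\beta}/2$, Theorem~\ref{thm-ExcessEstV2} gives, just as there,
\begin{equation*}
\sup_{z\in B^{Y}_{r}(y)}e_{p,q}(z)\ +\ \fint_{B^{Y}_{r}(y)}|De_{p,q}|^2\,d\mm_Y\ \le\ \tilde\delta\qquad\text{for all }r\in[1,\tilde\delta^{-1}],
\end{equation*}
and since $\tilde\delta=\delta_1(\vare,N-k)\le\delta(\vare,N-k)$, Theorem~\ref{thm-AlmSplitV2} with one splitting factor produces a pointed $\RCD^*(0,N-k-1)$ space $(Y',\sfd_{Y'},\mm_{Y'},y')$ with $\sfd_{mGH}\big(B^{Y}_{\vare^{-1}}(y),B^{\R\times Y'}_{\vare^{-1}}(0,y')\big)\le\vare$, the ball being centred at $y$. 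The location constraints hold automatically: $\sfd_Y(y,y_{k,\eta})=L/2<\eta^{-1}/2$, so $y\in B^{Y}_{\eta^{-1}/2}(y_{k,\eta})$, and $B^{Y}_{\vare^{-1}}(y)\subseteq B^{Y}_{L/2+\vare^{-1}}(y_{k,\eta})\subseteq B^{Y}_{\eta^{-1}}(y_{k,\eta})$ whenever $\vare^{-1}\le\eta^{-1}/2$. Collecting the three requirements, it suffices to take
\begin{equation*}
\eta(\vare,N):=\min\Big\{\tfrac{\vare}{2},\ \eta_0(N),\ \tfrac14\,c_N\,\delta_1(\vare,N-k)^{\beta}\Big\}.
\end{equation*}

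One borderline case escapes this argument: $N-k=1$, possible only when $N\in\N$ and $k=N-1$, since Proposition~\ref{prop-CorAlmostSplitting} and Theorem~\ref{thm-ExcessEstV2} require a dimension parameter strictly above $1$. There $Y_{k,\eta}$ is $\RCD^*(0,1)$ and, by Lemma~\ref{lem-diamEst}, unbounded for $\eta$ small; invoking the structure of one-dimensional $\RCD$ spaces, together with the fact that $\bar X$ carries the cocompact isometric action of $\Gamma'\cong\mathbb{Z}^{\be(X)}$ (which rules out a half-line), $Y_{k,\eta}$ is isometric to $\R$, so $B^{Y_{k,\eta}}_{\vare^{-1}}(y_{k,\eta})$ is already isometric to $B^{\R\times\{y'\}}_{\vare^{-1}}(0,y')$ and one takes $y:=y_{k,\eta}$, $Y':=\{y'\}$.

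The main technical burden is the bookkeeping of the three scales — the target $\vare$, the inductive scale $\eta$ at which $\bar X$ is mGH-close to $\R^{k}\times Y_{k,\eta}$, and the almost-splitting scale $\tilde\delta$ — and, coupled to it, the localization of $p,q,y$ guaranteeing $B^{Y_{k,\eta}}_{\vare^{-1}}(y)\subseteq B^{Y_{k,\eta}}_{\eta^{-1}}(y_{k,\eta})$, which is precisely what will later let this splitting be transported back to $\bar X$ via the $\bf A_k$ approximation (en route to $\bf A_{k+1}$). The one-dimensional borderline is the only place that genuinely needs a separate, classification-based argument.
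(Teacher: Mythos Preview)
Your argument follows the same strategy as the paper: invoke Lemma~\ref{lem-diamEst} to guarantee enough room in $Y_{k,\eta}$, then run the line-splitting mechanism of Proposition~\ref{prop-CorAlmostSplitting} at a midpoint located close to $y_{k,\eta}$. Your placement of $p,q,y$ along a geodesic from $y_{k,\eta}$ to a distant point $a$ is a mild variant of the paper's direct choice $p=y_{k,\eta}$, $q$ at distance $c_N\eta^{-1}$, and the location control $y\in B^{Y_{k,\eta}}_{\eta^{-1}/2}(y_{k,\eta})$ together with the ball inclusion follow in the same way.

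Two small issues. First, by setting $\tilde\delta=\delta_1(\vare,N-k)$ you implicitly apply Proposition~\ref{prop-CorAlmostSplitting} with dimension parameter $N-k$; your parenthetical ``for which $\beta$ is still admissible'' then requires $\beta>(2+\alpha(N-k))/\alpha(N-k)$, whereas $\beta$ was only chosen relative to $\alpha(N)$ in Theorem~\ref{thm-GH}, and no monotonicity of $\alpha$ is stated. Second, the separate treatment of $N-k=1$ is not valid as written: the space $Y_{k,\eta}$ is produced by the almost-splitting theorem and carries no a priori relation to the $\Gamma'$-action on $\bar X$, so you cannot rule out a half-line via that action (nor do you need to --- even on a half-line your main construction, which forces $y\geq\rho=\tilde\delta^{-\beta}$, places $B^{Y_{k,\eta}}_{\vare^{-1}}(y)$ away from the boundary). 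The paper sidesteps both points at once by simply viewing $Y_{k,\eta}$ as an $\RCD^*(0,N)$ space (valid since $N-k\leq N$) and applying Proposition~\ref{prop-CorAlmostSplitting} with parameter $N$: then $\beta$ is admissible by construction and $N>1$ always holds.
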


\begin{proof}
Define \begin{equation*}
\eta(\vare,N) = \min\left\{\frac{\vare}{2}, \eta_0(N), \frac{c_N}{2}\delta_1(\vare,N)^{\beta}\right\},
\end{equation*}
where $\delta_1(\vare,N)>0$ is the quantity given by Proposition \ref{prop-CorAlmostSplitting} and $c_N, \eta_0(N)$ are defined in Lemma \ref{lem-diamEst}. Then by assumption $\bf A_k$ and Lemma \ref{lem-diamEst}, for any $\eta \in(0, \eta(\vare,N)]$ and for all $\delta \in (0, \delta_k(\eta,N)]$, if $(X,\sfd,\mm)$ is an $\RCD^*(-\delta^{2\beta},N)$ space as in assumption $\bf A_k$, then there exist $\bar{x}_{k,\eta} \in \bar X$ and a pointed $\RCD^*(0,N-k)$ space $(Y_{k,\eta}, \sfd_{Y_{k,\eta}}, \mm_{Y_{k,\eta}}, y_{k,\eta})$ such that 
\begin{align}
& \sfd_{mGH}(B^{\bar X}_{\eta^{-1}}(\bar x_{k,\eta}), B^{\R^k \times Y_{k,\eta}}_{\eta^{-1}}(0^k,y_{k,\eta})) \leq \eta \nonumber \\
& \diam (B^{Y_{k, \eta}}_{\eta^{-1}}(y_{k,\eta}))> c_{N} \, \eta^{-1}.  \label{eq:diamY_keta}
\end{align}
Let $\xi>0$ be such that $c_N\, \eta^{-1}=2\xi^{-\beta}$. Our choice of $\eta(\vare,N)$ ensures that for any $\eta \in (0,\eta(\vare,N)]$ we have $\xi \in (0, \delta_1(\vare,N)]$. Therefore we can apply Proposition \ref{prop-CorAlmostSplitting}  to $Y_{k,\eta}$ and get that there exist $y \in Y_{k,\eta}$ and a pointed $\RCD^*(0, N-k-1)$ space $(Y', \sfd_{Y'}, \mm_{Y'}, y')$ such that 
\begin{equation*}
\sfd_{mGH}(B^{Y_{k, \eta}}_{\vare^{-1}}(y), B^{\R \times Y'}_{\vare^{-1}}(0,y'))\leq \vare.
\end{equation*}
It remains to show that $y\in B^{Y_{k, \eta}}_{\eta^{-1}/2}(y_{k,\eta})$ and that $B^{Y_{k, \eta}}_{\vare^{-1}}(y) \subseteq B^{Y_{k, \eta}}_{\eta^{-1}}(y_{k,\eta})$. 
\\From the proof of Proposition \ref{prop-CorAlmostSplitting},  we know that $y$ is a midpoint of a geodesic between two points $p,q$ at distance equal to $c_{N} \eta^{-1}$. Since $Y_{k,\eta}$ is a geodesic space and $c_{N}\in (0,1)$, it is easily seen that \eqref{eq:diamY_keta} implies that there exists a point  $q \in B^{Y_{k, \eta}}_{\eta^{-1}}(y_{k,\eta})$ such that $\sfd_{Y_{k,\eta}}(q, y_{k,\eta}) = c_N \eta^{-1}$. 
Then, in the proof of Proposition \ref{prop-CorAlmostSplitting}  we can chose $p=y_{k,\eta}$, $q \in Y_{k,\eta}$ with $\sfd_{Y_{k,\eta}}(q, y_{k,\eta}) = c_N \eta^{-1}$  and $y$ a midpoint of a geodesic between $p$ and $q$. Therefore $\sfd_{Y_{k,\eta}}(y, y_{k,\eta}) = c_N \eta^{-1}/2  $. Now, for any point $z \in B^{Y_{k, \eta}}_{\vare^{-1}}(y)$ we have  
\begin{equation*}
\sfd_{Y_{k, \eta}}(z,y_{k,\eta}) \leq \sfd_{Y_{k, \eta}}(z,y) + \sfd_{Y_{k, \eta}}(y,y_{k,\eta}) < \vare^{-1}+ c_N \eta^{-1}/2.
\end{equation*}
Moreover, our choice of $\eta \leq \eta(\vare,N) \leq \vare/2$ ensures that $\vare^{-1} \leq \eta^{-1}/2$. Therefore, for any $z \in B^{Y_{k, \eta}}_{\vare^{-1}}(y)$, we have $\sfd_{Y_{k, \eta}}(z,y_{k,\eta}) < \eta^{-1}$, as desired.  
\end{proof}

We are now in position to prove Theorem \ref{thm-GH}. 

\begin{proof}[ \bf{Proof of Theorem \ref{thm-GH} }]
We proceed by induction. For $k=1$,  ${\bf A_1}$ follows from Corollary \ref{cor-step1}.
Now assume that ${\bf A_k}$ holds for some $1 \leq k <\lfloor N \rfloor$ and let us show ${\bf A_{k+1}}$. Denote by $C_1, C_{2}>0$ the constants appearing in Propositions \ref{prop-restrmGH} and \ref{prop-prodmGH} respectively and define $C:=\max\{C_1C_2, 2C_1\}$.  Fix $\vare\in (0,1)$ and let
$$
\vare_1 := \min\{ 1/2,  1/C \} \, \vare, \quad \eta_{1}:= \min\{\vare_{1}/4, \eta(\vare_{1}, N)\}, 
$$
 where $\eta(\vare_1,N)>0$ is given by Proposition  \ref{prop-AlmostSplittingY}.

With these choices, if $\delta \in (0, \delta_{k}(\eta,N))$ and $(X,\sfd_{X}, \mm_{X})$ is an $\RCD^*(-\delta^{2\beta},N)$ space that satisfies $\bf A_k$, then there exist $\bar x_{k,\eta_{1}}\in \bar X$, a pointed $\RCD^*(0,N-k)$ space $(Y_{k,\eta_{1}}, \sfd_{Y_{k,\eta_{1}}}, \mm_{Y_{k,\eta_{1}}}, y_{k,\eta_{1}})$ and an $\eta_1$-mGH approximation: 
\begin{equation*}
\phi : B^{\bar X}_{\eta_1^{-1}}(\bar x_{k,\eta_{1}}) \to B_{\eta_1^{-1}}^{\R^k\times Y_{k,\eta_{1}}}(0^k,y_{k,\eta_{1}}). 
\end{equation*}
Moreover, by Proposition \ref{prop-AlmostSplittingY}, there exist $y \in B^{Y_{k,\eta_{1}}}_{\eta_1^{-1}/2}(y_{k,\eta_{1}})$ with $B^{Y_{k,\eta_{1}}}_{\eps_1^{-1}}(y) \subset B^{Y_{k,\eta_{1}}}_{\eta_1^{-1}}(y_{k,\eta_{1}})$, an $\RCD^*(0,N-k-1)$ space $(Y',\sfd_{Y'},\mm_{Y'},y')$ and an $\eps_1$-mGH approximation
\begin{equation*}
\varphi': B^{Y_{k,\eta_{1}}}_{\eps_1^{-1}}(y) \rightarrow B^{\R \times Y'}_{\vare_1^{-1}}(0,y').
\end{equation*}
 Since $\eta_{1}\in (0,\vare_{1})$, the inclusion  $B^{Y_{k,\eta_{1}}}_{\eps_1^{-1}}(y) \subset B^{Y_{k,\eta_{1}}}_{\eta_1^{-1}}(y_{k,\eta_{1}})$ ensures that   $B^{\R^k \times Y_{k,\eta_{1}}}_{\eps_1^{-1}}((0^k,y))\subset B^{\R^k \times Y_{k,\eta_{1}}}_{\eta_1^{-1}}((0^k,y_{k,\eta_{1}}))$.

Therefore, there exists $\bar x_{k+1,\eta_{1}}$ in $B^{\bar X}_{\eta_1^{-1}}(\bar x_{k,\eta_{1}})$ such that
\begin{equation*}
\sfd_{\R^k \times Y_{k,\eta_{1}}}((0^k,y), \phi(\bar x_{k+1,\eta_{1}})) \leq \eta_1.
\end{equation*}
We aim to show that 
\begin{equation}\label{eq:claimThm4.1}
\sfd_{mGH}(B^{\bar X}_{\vare^{-1}}(\bar x_{k+1,\eta_{1}}), B^{\R^{k+1}\times Y'}_{\vare^{-1}}(0^{k+1},y'))\leq \vare.
\end{equation}
We first claim that \begin{equation}\label{eq:claimBxkxk+1}
B^{\bar X}_{\eta_{1}^{-1}/4}(\bar x_{k+1,\eta_{1}}) \subset B^{\bar X}_{\eta_1^{-1}}(\bar x_{k,\eta_{1}}).
\end{equation}

 Indeed, since $\phi$ is an $\eta_1$-mGH approximation, by the definition of $\bar x_{k+1,\eta_{1}}$ and $y$  we have 
 \begin{align*}
\sfd_{\bar X}(\bar x_{k+1,\eta_{1}}, \bar x_{k,\eta_{1}}) &\leq \sfd_{\R^{k}\times Y_{k,\eta_{1}}}\big(\phi(\bar x_{k+1,\eta_{1}}), (0^{k},y_{k,\eta_{1}})\big)+\eta_1 \\
& \leq \sfd_{\R^{k}\times Y_{k,\eta_{1}}}\big(\phi(\bar x_{k+1,\eta_{1}}),(0^{k},y)\big)+\sfd_{\R^{k}\times Y_{k,\eta_{1}}}\big( (0^{k},y),(0^{k},y_{k,\eta_{1}}) \big) +\eta_1 \leq 2\eta_1 +\frac{1}{2}\eta_1^{-1}. 
\end{align*}

The claim  \eqref{eq:claimBxkxk+1} follows by triangle inequality.\\ 

As a consequence, since $\vare^{-1} \leq  \eta_{1}^{-1}/4$, by Proposition \ref{prop-restrmGH} we can construct  a $(C_1\eta_1)$-mGH approximation out of $\phi$: 
\begin{equation*}
\phi_1: B^{\bar X}_{\vare^{-1}}(\bar x_{k+1,\eta_{1}}) \to B_{\vare^{-1}}^{\R^k \times Y_{k,\eta_{1}}}((0^k,y)).
\end{equation*}
Thanks to Proposition \ref{prop-prodmGH}, there exists a $(C_2\eps_1)$-mGH approximation: 
$$
\varphi: B_{\eps_1^{-1}}^{\R^k}(0^k) \times B^{Y_{k, \eta_{1}}}_{\eps_1^{-1}}(y)\to B_{\eps_1^{-1}}^{\R^k}(0^k) \times B_{\eps_1^{-1}}^{\R\times Y'}((0,y')).
$$
Since the ball centred at $(0^k,y)$ of radius $\eps^{-1} \leq \eps_1^{-1}/\sqrt{2}$ is included in the previous product of balls, we can use again  Proposition \ref{prop-restrmGH} to construct a  $(C_1C_2 \eps_1)$-mGH approximation out of $\varphi$:
\begin{equation*}
\varphi_1 : B^{\R^k \times Y_{k,\eta_{1}}}_{\eps^{-1}}((0^k,y)) \rightarrow B^{\R^{k+1}\times Y'}_{\eps^{-1}}((0^{k+1},y')).
\end{equation*}
The composition of $\varphi_1$ with $\phi_1$ then gives a $(2C_1\eta_1+C_1C_2\vare_1)$-mGH approximation:
\begin{equation*}
f =\varphi_1 \circ \phi_1 : B_{\vare^{-1}}(\bar x_{k+1,\eta_{1}}) \rightarrow B^{\R^{k+1}\times Y'}_{\vare^{-1}}((0^{k+1},y')). 
\end{equation*}
Thanks to our choices of $C, \eps_1$ and $\eta_1$, the map $f$ is an $\eps$-mGH approximation and the claim \eqref{eq:claimThm4.1} is proved.

Finally, set $Y_{k+1,\vare}:=Y'$ and $y_{k+1, \eps}:=y'$. We have proven that given $\bf A_k$, for any $\eps \in (0, \eps(N)]$ there exists $\delta_{k+1}:=\delta_{k}(\vare,N)$ such that for any $\delta \in (0, \delta_{k+1}(\eps, N))$ and any  $\RCD^*(-\delta^{-2\beta},N)$ space $(X,\sfd,\mm)$ with $\diam(X)=1$ and $\be(X)=\lfloor N \rfloor$, there exist $\bar{x}_{k+1, \eps} \in \bar X$ and a pointed $\RCD^*(0,N-k-1)$ space $(Y_{k+1,\vare}, \sfd_{Y_{k+1,\vare}}, \mm_{Y_{k+1,\vare}},y_{k+1,\vare})$ 
such that
\begin{equation*}
\sfd_{mGH}(B^{\bar X}_{\eps^{-1}}(\bar x_{k+1, \eps}), B^{\R^{k+1}\times Y_{k+1, \eps}}_{\eps^{-1}}((0^{k+1},y_{k+1, \eps})) \leq \eps.
\end{equation*}
This shows that for any integer $0< k < \lfloor N \rfloor$, $\bf A_k$ implies $\bf A_{k+1}$. 
\end{proof}

\section{Proof of Theorem \ref{thm:main}, first claim}\label{sec:5}

In this section we prove the first part of the main Theorem \ref{thm:main}, by combining Theorem \ref{thm-GH} with the structure theory of $\RCD^{*}(K,N)$ spaces \cite{MondinoNaber, MondinoKell, GigliPasqualetto, BrueSemola}.   More precisely we show the following result, which in turn immediately implies the first claim of Theorem \ref{thm:main} by a standard scaling argument.

\begin{theorem}\label{main-delta}
For any $\eps \in (0,1)$ and $N \in (1, \infty)$ there exists $\delta(\eps,N)>0$ such that for all $\delta \in (0, \delta(\eps,N)]$, any $\RCD^*(-\delta,N)$ space $(X,\sfd,\mm)$ with $\be(X)=\lfloor N \rfloor$ and $\diam(X)=1$ has essential dimension equal to $\lfloor N \rfloor$ and it is $\lfloor N \rfloor$-rectifiable as a metric measure space. Moreover, if $N \in \N$, there exists $c>0$ such that $\mm=c \mathcal{H}^{\lfloor N \rfloor}$. 
\end{theorem}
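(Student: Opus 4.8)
The plan is to prove the essential dimension statement on the covering space $\bar X$ and then push it down to $X$. Throughout, replace $\delta$ by $\delta^{1/(2\beta)}$ so that the hypothesis $\RCD^*(-\delta,N)$ of the present statement matches the hypothesis $\RCD^*(-\delta^{2\beta},N)$ of Theorem \ref{thm-GH}; it will be clear that the resulting threshold depends on $N$ only, so one may take $\delta(\eps,N)$ to be (a power of) it. Fix an auxiliary parameter $\eps_0=\eps_0(N)\in(0,1)$, to be chosen small depending only on $N$. By Theorem \ref{thm-GH} applied with this $\eps_0$, for $\delta$ small enough there is $\bar x\in\bar X$ with $\sfd_{mGH}(B^{\bar X}_{\eps_0^{-1}}(\bar x),B^{\R^{\lfloor N\rfloor}}_{\eps_0^{-1}}(0^{\lfloor N\rfloor}))\le\eps_0$; since $\bar x\in\supp(\mm_{\bar X})$, Bishop--Gromov (Theorem \ref{thm-BishopGromov}) provides a definite lower bound on $\mm_{\bar X}(B_1(\bar x))$, so, after rescaling the metric by the factor $\eps_0$, the ball $B_1(\bar x)$ is, up to Gromov--Hausdorff error $\le\eps_0$, a unit Euclidean ball inside an $\RCD^*(-\eps_0^{-2}\delta^{2\beta},N)$ space of definite volume — and the lower Ricci bound $-\eps_0^{-2}\delta^{2\beta}$ can be made as close to $0$ as we wish by choosing $\delta$ small relative to $\eps_0$.

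The crucial input is the $\eps$-regularity theorem of Mondino--Naber \cite{MondinoNaber}: choosing $\eps_0=\eps_0(N)$ small enough, it gives that $\mathcal R_{\lfloor N\rfloor}(\bar X)$ has almost full measure in $B_{1/2}(\bar x)$, in particular $\mm_{\bar X}\big(\mathcal R_{\lfloor N\rfloor}(\bar X)\big)>0$. By the constancy-of-dimension theorem of Bru\`e--Semola \cite{BrueSemola}, there is a \emph{unique} integer $n\in\{1,\dots,\lfloor N\rfloor\}$ with $\mm_{\bar X}(\mathcal R_n(\bar X))>0$ — the essential dimension of $\bar X$ — and moreover $\mm_{\bar X}(\bar X\setminus\mathcal R_n(\bar X))=0$; combining this with the previous line forces $n=\lfloor N\rfloor$. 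Hence $\bar X$ has essential dimension $\lfloor N\rfloor$ and $\mathcal R_{\lfloor N\rfloor}(\bar X)$ has full $\mm_{\bar X}$-measure.

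To transfer this to $X$, recall from Corollary \ref{lem-barXRCD} (and the construction of the lifted metric and measure) that the covering map $p\colon\bar X\to X$ is a surjective local isomorphism of metric measure spaces; in particular it preserves blow-ups, so $\Tan(X,\sfd,\mm,x)=\Tan(\bar X,\sfd_{\bar X},\mm_{\bar X},\bar x)$ for every $x\in X$ and every lift $\bar x$, whence $p^{-1}(\mathcal R_k(X))=\mathcal R_k(\bar X)$ for all $k$. Since $X$ is compact, it is covered by finitely many evenly covered balls on which $p$ is measure-preserving, so $\mm$-null sets in $X$ correspond to $\mm_{\bar X}$-null sets in $\bar X$; therefore $\mm(X\setminus\mathcal R_{\lfloor N\rfloor}(X))=0$ and $\mm(\mathcal R_{\lfloor N\rfloor}(X))>0$, i.e.\ $X$ has essential dimension $\lfloor N\rfloor$. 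By the structure theory recalled in Section \ref{SS:StructRCD} (\cite{MondinoNaber}, refined by \cite{MondinoKell,DPhMR,GigliPasqualetto}), a space of essential dimension $\lfloor N\rfloor$ is $\lfloor N\rfloor$-rectifiable as a metric measure space, which is the second assertion. Finally, if $N\in\N$ then $\lfloor N\rfloor=N$, so the rectifiability gives $\mm\ll\mathcal H^{N}$, i.e.\ $(X,\sfd,\mm)$ is weakly non-collapsed; being compact, Honda's Theorem \ref{thm-Honda} yields $\mm=c\,\mathcal H^{N}$ for some $c>0$.

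The main obstacle is the $\eps$-regularity step: one must ensure that a \emph{single} large ball which is merely mGH-close to a Euclidean ball of the correct dimension forces a \emph{positive-measure} set of $\lfloor N\rfloor$-regular points. In the non-smooth setting — where $N$ need not be an integer and there is a priori no bi-Lipschitz structure — this requires genuinely combining Mondino--Naber's $\eps$-regularity with Bru\`e--Semola's constancy of dimension rather than an elementary argument, and it is the reason the proof cannot simply invoke the single regular point produced in Remark \ref{rem:thm-GH}. A secondary, routine but delicate point is the bookkeeping of rescalings (the passage from $B_{\eps_0^{-1}}(\bar x)$ to $B_1(\bar x)$ magnifies the negative lower Ricci bound, so $\delta$ must be taken much smaller than $\eps_0$, and the exponent $\beta$ from Theorem \ref{thm-GH} must be tracked through the relabelling $\delta\mapsto\delta^{1/(2\beta)}$).
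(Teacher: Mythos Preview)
Your proposal is correct and follows essentially the same approach as the paper: apply Theorem~\ref{thm-GH} to $\bar X$, feed the resulting large Euclidean ball into the Mondino--Naber $\eps$-regularity theorem (Theorem~\ref{thm-Ubilip}) combined with Bru\`e--Semola's constancy of dimension to conclude that $\bar X$ has essential dimension $\lfloor N\rfloor$ (this is the paper's Corollary~\ref{cor_EpsReg_EssDim} and Corollary~\ref{cor-XbarRect}), then push down to $X$ via the local m.m.s.\ isomorphism given by the covering map, and finish with the structure theory and Honda's theorem. The only cosmetic difference is that your rescaling step is unnecessary, since Theorem~\ref{thm-Ubilip} is already formulated for the large ball $B_{\delta^{-1}}$ rather than a unit ball; dropping it streamlines the argument and avoids the bookkeeping you flag at the end.
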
 
 
In \cite[Theorem 6.8]{MondinoNaber}, the authors proved that for any $\eps>0$ there exists $\delta>0$ such that if $(X,\sfd,\mm)$ is an $\RCD^*(-\delta, N)$ space and a ball of radius $\delta^{-1}$ is $\delta$-mGH close to a Euclidean ball of the same radius in $\R^{\lfloor N \rfloor }$, then there exists a subset (of large measure) $U_\eps$ of the unit ball which is $(1+\eps)$ bi-Lipschitz to a subset of $\R^{\lfloor N \rfloor }$. In order to construct $U_\eps$ and the bi-Lipschitz map into $\R^{\lfloor N \rfloor }$, they showed the existence of a function $u$ on the unit ball which,  restricted to any ball $B_s(x)$ centred at a point $x$ of $U_\eps$, is an $(\eps s)$-mGH isometry. We summarise these results in the following statement.

\begin{theorem}[{\cite[Theorem 6.8]{MondinoNaber}}]\label{thm-Ubilip}
For every $N \in (1, \infty)$ there exists $\delta_{0}=\delta_{0}(N)>0$  with the following property.  Let $(X, \sfd, \mm)$ be an $\RCD^*(-\delta,N)$ space for some $\delta\in [0, \delta_{0})$ and assume that for some $x_{0} \in X$
 it holds 
\begin{equation*}
\sfd_{mGH}( B^{X}_{\delta^{-1}}(x_{0}) ,  B^{\R^{\lfloor N \rfloor }}_{\delta^{-1}}(0^{\lfloor N \rfloor })) \leq \delta.
\end{equation*}
Then there exists a Borel subset $U_{\vare} \subset B_1(\bar x)$  such that
\begin{itemize}
\item [1.]$\mm(B_1(x_{0}) \setminus U_{\vare}) \leq \vare$; 
\item[2.]  $U_{\vare}$ is $(1+\vare)$ bi-Lipschitz to a subset of $\R^{\lfloor N \rfloor }$; 
\item[3.] For all $x \in U_{\vare}$ and for all $r \in (0,1]$ such that $B^{X}_r(x) \subset B^{X}_1(x)$, we have
$$\sfd_{mGH}(B_r^{X}(x), B^{\R^{\lfloor N \rfloor }}_{r}(0^{\lfloor N \rfloor }))\leq \eps r.$$
\end{itemize}
In particular, for any $x \in U_{\vare}$ and for any tangent cone $(Y,\sfd_Y, \mm_Y)$ at $x$ we have
$$\sfd_{mGH}(B^Y_1(x), B_1^{\R^{\lfloor N \rfloor }}(0^{\lfloor N \rfloor }) )\leq \eps.$$
\end{theorem}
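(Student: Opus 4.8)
Since this is a restatement of \cite[Theorem 6.8]{MondinoNaber}, the plan is to recall the structure of that argument in the present normalisation. First I would use the hypothesis $\sfd_{mGH}(B^X_{\delta^{-1}}(x_0),B^{\R^{\lfloor N\rfloor}}_{\delta^{-1}}(0^{\lfloor N\rfloor}))\le\delta$ in two ways: via Bishop--Gromov (Theorem \ref{thm-BishopGromov}) it yields a non-collapsing bound $\mm(B_r(x))\ge c(N)\,r^{\lfloor N\rfloor}\,\mm(B_1(x_0))$ for $x$ near $x_0$ and $r\le 1$, and via the GH-approximation it produces $2\lfloor N\rfloor$ points $\{p_a,q_a\}$ at distance comparable to $\delta^{-1}$ from $x_0$, lying on almost-orthogonal almost-lines and with excess functions controlled as in Theorem \ref{thm-ExcessEstV2}. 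Replacing the functions $\sfd(p_a,\cdot)$ by their harmonic representatives on $B_2(x_0)$ and invoking the second-order calculus together with the (self-improved) Bochner inequality on $\RCD^*(K,N)$ spaces, one obtains a harmonic map $u=(u^1,\dots,u^{\lfloor N\rfloor})\colon B_2(x_0)\to\R^{\lfloor N\rfloor}$ that is a $\psi(\delta|N)$-splitting map: $\sup_{B_2(x_0)}|\nabla u^a|\le 1+\psi(\delta|N)$ and
\[
\fint_{B_2(x_0)}\Big(\sum_{a,b}\big|\langle\nabla u^a,\nabla u^b\rangle-\delta_{ab}\big|+\sum_a|\Hess u^a|^2\Big)\,d\mm\le\psi(\delta|N),
\]
with $\delta_{ab}$ the Kronecker symbol and $\psi(\delta|N)\to 0$ as $\delta\to 0$.

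The second step localises this bound to a set of almost full measure. Introducing the scale-invariant splitting defect $\mathcal D_u(x,r):=\fint_{B_r(x)}\sum_{a,b}|\langle\nabla u^a,\nabla u^b\rangle-\delta_{ab}|\,d\mm+r^2\fint_{B_r(x)}\sum_a|\Hess u^a|^2\,d\mm$, I would apply the weak $(1,1)$ estimate for the restricted Hardy--Littlewood maximal operator on the doubling space $(X,\sfd,\mm)$ to the integrand above, combine it with the Poincar\'e inequality, and run a Vitali-type covering argument to extract a Borel set $U_\eps\subset B_1(x_0)$ with $\mm(B_1(x_0)\setminus U_\eps)\le\eps$ such that $\mathcal D_u(x,r)\le\eps_1$ for all $x\in U_\eps$ and all $r\in(0,1]$ with $B_r(x)\subset B_1(x_0)$, where $\eps_1=\eps_1(\eps,\delta|N)\to 0$ as $\delta\to 0$ with $\eps$ fixed. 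The role of the maximal-function upgrade is exactly that the small defect then holds at \emph{every} small scale around points of $U_\eps$, not just on average over $B_1(x_0)$.

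Given this, item~3 follows by scaling: for $x\in U_\eps$ and $r\le 1$ with $B_r(x)\subset B_1(x_0)$, the rescaled, affinely normalised map $u$ is a $\psi'$-splitting map on $B_1(x)$ in $(X,r^{-1}\sfd,\mm_r^x)$ with $\psi'\to 0$, so a version of the almost-splitting theorem (cf. Theorem \ref{thm-AlmSplitV2} and \cite{MondinoNaber}) gives a $\psi'$-mGH approximation from $B_1(x)$ onto a ball in $\R^{\lfloor N\rfloor}\times Y$; since $N-\lfloor N\rfloor<1$ the factor $Y$ is a point, whence $\sfd_{mGH}(B^X_r(x),B^{\R^{\lfloor N\rfloor}}_r(0^{\lfloor N\rfloor}))\le\eps r$ once $\delta$ is small. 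For items~1--2, the upper Lipschitz bound $|u(x)-u(y)|\le(1+\eps)\sfd(x,y)$ is immediate from $\sup|\nabla u^a|\le 1+\psi(\delta|N)$ by integration along a geodesic, while the lower bound $\sfd(x,y)\le(1+\eps)|u(x)-u(y)|$ for $x,y\in U_\eps$ is obtained by covering a geodesic from $x$ to $y$ by boundedly many good balls of radius comparable to $\sfd(x,y)$, on each of which item~3 makes $u$ an $\eps r$-GH approximation onto a Euclidean ball and hence nearly distance preserving, and chaining the estimates; this gives the $(1+\eps)$-bi-Lipschitz equivalence of $U_\eps$ with $u(U_\eps)\subset\R^{\lfloor N\rfloor}$. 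The final ``in particular'' follows by applying item~3 with $r=r_i\downarrow 0$ and passing to a pmGH limit of $(X,r_i^{-1}\sfd,\mm^x_{r_i},x)$.

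The main obstacle is the construction of the $\delta$-splitting map with the \emph{second-order} estimate $\fint|\Hess u^a|^2\le\psi(\delta|N)$, which rests on the sharp Laplacian comparison for distance functions, the stability of harmonicity under harmonic replacement, and the self-improvement of the Bakry--\'Emery/Bochner inequality on $\RCD$ spaces --- this is the technical heart of \cite{MondinoNaber}. A secondary difficulty is keeping the passage from the integral defect to the pointwise bi-Lipschitz and mGH estimates quantitatively uniform, which requires tracking the non-collapsing constant (so that $u$ genuinely sees all $\lfloor N\rfloor$ directions at every small scale) and applying the almost-splitting theorem with effective constants along the good set. All of this being already carried out in \cite{MondinoNaber}, in the paper it suffices to quote the statement; the sketch above only indicates why it holds in the stated $\RCD$ generality.
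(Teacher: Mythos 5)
The paper does not prove this statement: it is quoted verbatim (with a normalisation of scales) from \cite[Theorem 6.8]{MondinoNaber}, and the only ``proof'' offered is the citation together with the remark that item~3 is contained in the proof given there. Your sketch is a correct high-level account of an $\eps$-regularity argument of this type, and items~1--3 and the ``in particular'' clause are deduced in essentially the right order (integral smallness $\Rightarrow$ maximal-function localisation to a good set $\Rightarrow$ almost splitting at every scale $\Rightarrow$ bi-Lipschitz chart and tangent-cone rigidity). However, the route you describe is not quite the one taken in \cite{MondinoNaber}: their proof of Theorem 6.8 does \emph{not} pass through harmonic $\delta$-splitting maps with the Hessian bound $\fint|\Hess u^a|^2\le\psi(\delta|N)$ obtained from the self-improved Bochner inequality --- that second-order technology for $\RCD$ spaces was developed only in later work. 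Instead, Mondino--Naber run the weak $(1,1)$ maximal-function and covering argument directly on the excess functions $e_{p_a,q_a}$ and the quantities $|De_{p_a,q_a}|^2$ and $|D((\sfd^{p_a}+\sfd^{p_b})/\sqrt2-\sfd^{p_a+p_b})|^2$ controlled by the Abresch--Gromoll-type estimates (the paper's Theorem \ref{thm-ExcessEstV2}), and then invoke the almost splitting theorem (Theorem \ref{thm-AlmSplitV2}) at every scale around points of $U_\eps$; the maps $u^a=\sfd(p_a,\cdot)-\sfd(p_a,x)$ themselves serve as the GH approximations and as the bi-Lipschitz chart on $U_\eps$, with the lower Lipschitz bound coming from the scale-$\sfd(x,y)$ GH approximation rather than from a chaining along geodesics. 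Your version is closer to the Cheeger--Jiang--Naber/Bru\`e--Pasqualetto--Semola refinement and would also work, but it assumes as input (Hessian estimates for harmonic replacements on $\RCD^*(K,N)$ spaces) strictly more than what the cited theorem, as actually proved, requires. Since the paper only needs the statement as a black box, either account is acceptable as background, but attributing the Hessian-based proof to \cite{MondinoNaber} would be inaccurate.
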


The third property is contained in the proof of \cite[Theorem 6.8]{MondinoNaber}. Thanks to the constancy of the dimension of $\RCD^*(K,N)$ spaces proved by Bru\'e-Semola \cite{BrueSemola}, the following holds. 

\begin{corollary}\label{cor_EpsReg_EssDim}
For every $N \in (1, \infty)$ there exists $\delta_{0}=\delta_{0}(N)>0$  with the following property.  Let $(X, \sfd, \mm)$ be an $\RCD^*(-\delta,N)$ space for some $\delta\in [0, \delta_{0})$ and assume that for some $x_{0} \in X$
 it holds 
\begin{equation}
\label{eq_hypEpsReg}
\sfd_{GH}( B^{X}_{\delta^{-1}}(x_{0}), B^{\R^{\lfloor N \rfloor }}_{\delta^{-1}}(0^{\lfloor N \rfloor }) ) \leq \delta.
\end{equation}  
Then the essential dimension of $X$ is equal to $\lfloor N \rfloor $ and $(X,\sfd,\mm)$ is $\lfloor N \rfloor $-rectifiable as a metric measure space. 
\end{corollary}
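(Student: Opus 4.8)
The plan is to argue by contradiction, using the compactness of the class of $\RCD^*(K,N)$ spaces to promote the purely metric (Gromov--Hausdorff) hypothesis of the corollary to the measured hypothesis of Theorem~\ref{thm-Ubilip}, and then to combine the resulting $\eps$-regularity with the structure theory recalled in Section~\ref{SS:StructRCD}.

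Set $k:=\lfloor N\rfloor$. First I would fix, once and for all,
\[
\eps(N):=\tfrac12\min\Big(\{1\}\cup\big\{\,\sfd_{GH}\big(B^{\R^j}_1(0^j),\,B^{\R^k}_1(0^k)\big)\ :\ 1\le j\le k,\ j\neq k\,\big\}\Big),
\]
which is positive since closed unit balls of Euclidean spaces of distinct dimensions are never isometric, and let $\delta_0(N):=\delta_0(\eps(N),N)$ be the threshold provided by Theorem~\ref{thm-Ubilip} for this $\eps$. Suppose the corollary fails for this $\delta_0(N)$: then there are $\delta_i\downarrow 0$, $\RCD^*(-\delta_i,N)$ spaces $(X_i,\sfd_i,\mm_i)$ and points $x_i$ with $\sfd_{GH}(B^{X_i}_{\delta_i^{-1}}(x_i),B^{\R^k}_{\delta_i^{-1}}(0^k))\le\delta_i$ that are either not of essential dimension $k$ or not $k$-rectifiable as metric measure spaces; by the structure theory and Bru\`e--Semola~\cite{BrueSemola} both alternatives force $\mm_i(\mathcal{R}_k)=0$, which we therefore assume. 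After multiplying each $\mm_i$ by a positive constant so that $\mm_i(B_1(x_i))=1$ (which affects neither the $\RCD^*(-\delta_i,N)$ property nor $\mathcal{R}_k$), Theorem~\ref{thm:stab} gives a subsequence along which $(X_i,\sfd_i,\mm_i,x_i)\to(X_\infty,\sfd_\infty,\mm_\infty,x_\infty)$ in the pmGH sense, an $\RCD^*(0,N)$ space. From $\sfd_{GH}(B^{X_i}_{\delta_i^{-1}}(x_i),B^{\R^k}_{\delta_i^{-1}}(0^k))\le\delta_i$ with $\delta_i\downarrow0$ one gets $\sfd_{GH}(B^{X_i}_R(x_i),B^{\R^k}_R(0^k))\to0$ for every fixed $R>0$, so $(X_\infty,\sfd_\infty,x_\infty)$ is isometric to $\R^k$ with its Euclidean distance; since $\R^k$ contains $k$ lines in linearly independent directions, iterating the splitting theorem (Gigli~\cite{GigliSplitting}, after Cheeger--Gromoll~\cite{ChGr}) shows that $\mm_\infty$ is a constant multiple of $\mathcal{H}^k$. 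Replacing $\mm_i$ by one further constant multiple, we may assume $\mm_\infty$ equals the Euclidean reference measure appearing in Theorem~\ref{thm-Ubilip}.

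Next I would transfer this convergence back to the spaces $X_i$. Choose a suitable $\rho\in(\delta_0(N)/2,\delta_0(N))$ (one at which the measure convergence in the pmGH limit holds). For $i$ large enough, $\delta_i<\rho$, so $X_i$ is an $\RCD^*(-\rho,N)$ space, and the pmGH convergence to $(\R^k,\sfd_{\R^k},\mm_\infty,0^k)$ yields $\sfd_{mGH}(B^{X_i}_{\rho^{-1}}(x_i),B^{\R^k}_{\rho^{-1}}(0^k))\le\rho$. Hence $X_i$ satisfies the hypotheses of Theorem~\ref{thm-Ubilip} with parameter $\rho$, and for our fixed $\eps(N)$ we obtain a Borel set $U_i\subset B^{X_i}_1(x_i)$ with $\mm_i(B_1(x_i)\setminus U_i)\le\eps(N)$ all of whose points have every tangent cone $\eps(N)$-mGH-close to $B^{\R^k}_1(0^k)$. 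Since the regular sets $\{\mathcal{R}_j\}_{j=1}^{k}$ essentially cover $X_i$ and $\eps(N)<1=\mm_i(B_1(x_i))$, there is $j\in\{1,\dots,k\}$ and a point $z\in U_i\cap\mathcal{R}_j$; its unique tangent cone being $(\R^j,\sfd_{\R^j},c_j\mathcal{H}^j,0^j)$, we get $\sfd_{GH}(B^{\R^j}_1(0^j),B^{\R^k}_1(0^k))\le\eps(N)$, which by the choice of $\eps(N)$ is possible only if $j=k$. Therefore $\mm_i(\mathcal{R}_k)\ge\mm_i(U_i\cap\mathcal{R}_k)>0$, contradicting $\mm_i(\mathcal{R}_k)=0$.

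This shows that every $(X,\sfd,\mm)$ satisfying the hypotheses of the corollary with $\delta_0=\delta_0(N)$ has $\mm(\mathcal{R}_k)>0$; then Bru\`e--Semola forces the essential dimension to be exactly $k=\lfloor N\rfloor$ and $\mm(X\setminus\mathcal{R}_k)=0$, while $\mm\llcorner\mathcal{R}_k\ll\mathcal{H}^k$ by \cite{MondinoKell,DPhMR,GigliPasqualetto} and $\mathcal{R}_k$ is $(\mm,k)$-rectifiable by \cite{MondinoNaber}, so $(X,\sfd,\mm)$ is $\lfloor N\rfloor$-rectifiable as a metric measure space. The main obstacle is precisely this first reduction --- passing from the metric hypothesis of the corollary to the measured hypothesis of Theorem~\ref{thm-Ubilip} --- which is what forces the compactness argument and, crucially, the rigidity ``an $\RCD^*(0,N)$ structure on flat $\R^k$ carries a measure proportional to $\mathcal{H}^k$''; some additional bookkeeping is needed because the closeness required in Theorem~\ref{thm-Ubilip} lives exactly at the scale $\delta^{-1}$ conjugate to the curvature bound $-\delta$.
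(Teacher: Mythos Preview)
Your argument is essentially correct and reaches the same conclusion as the paper, but it takes a substantially longer route. The paper's proof works directly with a \emph{single} space $X$ satisfying the hypothesis: letting $n$ be the essential dimension of $X$ (which exists by \cite{BrueSemola}), it simply applies Theorem~\ref{thm-Ubilip} to obtain a set $U_\eps\subset B_1(x_0)$ of positive measure whose points have tangent cones $\eps$-close to $\R^{\lfloor N\rfloor}$; picking any $x\in U_\eps\cap\mathcal R_n$ (which exists since $\mm(U_\eps)>0$) forces $B_1^{\R^n}$ to be $\eps$-close to $B_1^{\R^{\lfloor N\rfloor}}$, hence $n=\lfloor N\rfloor$ for $\eps$ small. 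No sequences, no compactness, no splitting theorem.

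The extra machinery you introduce (compactness of $\RCD^*$ spaces, pmGH convergence, and the splitting theorem to identify the limit measure on $\R^k$) is there solely to upgrade the purely metric hypothesis $\sfd_{GH}\le\delta$ of the corollary to the \emph{measured} hypothesis $\sfd_{mGH}\le\delta$ required by Theorem~\ref{thm-Ubilip}. The paper does not address this discrepancy and applies Theorem~\ref{thm-Ubilip} directly; in the only place the corollary is used (Corollary~\ref{cor-XbarRect}), the input from Theorem~\ref{thm-GH} is already mGH-closeness, so in practice the distinction is immaterial. Your approach is thus more careful on this point, at the cost of a considerably longer argument.

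One genuine (though easily fixed) logical slip: you fix a specific candidate $\delta_0(N)$ and then write ``Suppose the corollary fails for this $\delta_0(N)$: then there are $\delta_i\downarrow 0$\ldots''. A single failure for a fixed $\delta_0$ yields only one counterexample at some $\delta<\delta_0$, not a sequence with $\delta_i\to 0$. The correct setup is to suppose that \emph{no} $\delta_0>0$ works, which then produces counterexamples $X_i$ with $\delta_i<1/i$; the rest of your argument goes through unchanged.
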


\begin{proof}
By the definition of the dimension of $\RCD$ spaces, we know that there exists a unique $n \in \N$, with $n\leq \lfloor N \rfloor$, such that the $n$-th regular stratum $\mathcal{R}_n$ has positive measure. Therefore, by definition of $\mathcal{R}_n$ for $\mm$-a.e. $x \in X$, tangent cones at $x$ are unique and equal to the Euclidean space $(\R^n, \sfd_{\R^n}, \mathcal{L}^n)$. Now assume by contradiction that $n < \lfloor N \rfloor$. Because of Theorem \ref{thm-Ubilip}, \eqref{eq_hypEpsReg} implies the existence of a set $U_\eps$ satisfying properties 1 to 3, with $\mm(U_\eps)>0$. As a consequence, there exists $x \in U_\eps$ with unique tangent cone equal to $\R^n$. Property 3 then implies that the unit ball in $\R^n$ is $\eps$-GH close to the unit ball in $\R^{\lfloor N \rfloor }$, which is impossible for $n< \lfloor N \rfloor$ and $\eps>0$ sufficiently small. Therefore, $\lfloor N \rfloor $ is the essential dimension of $(X,\sfd,\mm)$ and $(X,\sfd,\mm)$ is $\lfloor N \rfloor $-rectifiable as a metric measure space. 
\end{proof}

The combination of Corollary \ref{cor_EpsReg_EssDim} and Theorem \ref{thm-GH} yields the following result.  

\begin{corollary}\label{cor-XbarRect}
For any $\eps \in (0,1)$ and $N \in (1, \infty)$ there exists $\delta(\eps,N)>0$ with the following property. If $(X,\sfd,\mm)$ is an $\RCD^*(-\delta,N)$ space  with $\be(X)=\lfloor N \rfloor$ and $\diam(X)=1$, then the covering space $(\bar X, \sfd_{\bar X},  \mm_{\bar X})$ has essential dimension equal to $\lfloor N \rfloor$ and it is $\lfloor N \rfloor$-rectifiable as a metric measure space. 
\end{corollary}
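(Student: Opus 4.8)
The plan is to deduce the statement by feeding the output of Theorem \ref{thm-GH} into Corollary \ref{cor_EpsReg_EssDim}, after a routine reconciliation of the various smallness parameters. First I would fix $N \in (1,\infty)$ and the target $\eps \in (0,1)$. Let $\delta_0(N)>0$ be the constant furnished by Corollary \ref{cor_EpsReg_EssDim} (equivalently by Theorem \ref{thm-Ubilip}); we may assume $\delta_0(N) \le \eps$. Set $\mu := \min\{\delta_0(N), \eps\} \in (0,1)$ and apply Theorem \ref{thm-GH} with $\mu$ in place of its $\vare$: this yields a threshold $\delta_{\mathrm{GH}}(\mu,N)>0$, and for $\beta > (2+\alpha)/\alpha$ one has the conclusion that whenever $(X,\sfd,\mm)$ is $\RCD^*(-\delta^{2\beta},N)$ with $\delta \in (0,\delta_{\mathrm{GH}}(\mu,N)]$, $\diam(X)=1$ and $\be(X)=\lfloor N\rfloor$, there exists $\bar x \in \bar X$ with
\begin{equation*}
\sfd_{mGH}\big(B^{\bar X}_{\mu^{-1}}(\bar x), B^{\R^{\lfloor N\rfloor}}_{\mu^{-1}}(0^{\lfloor N\rfloor})\big) \le \mu.
\end{equation*}
To make the exponents match, I would define $\delta(\eps,N)$ by requiring $\delta(\eps,N)^{2\beta} \le$ (a sufficiently small quantity) so that a lower Ricci bound $-\delta$ on $(X,\sfd,\mm)$ forces $-\delta = -\eta^{2\beta}$ for some $\eta \le \delta_{\mathrm{GH}}(\mu,N)$; concretely one takes $\delta(\eps,N) := \big(\min\{\delta_{\mathrm{GH}}(\mu,N),\, 1\}\big)^{2\beta}$ and notes that $\RCD^*(-\delta,N)$ with $\delta \le \delta(\eps,N)$ is exactly $\RCD^*(-\eta^{2\beta},N)$ with $\eta := \delta^{1/(2\beta)} \le \delta_{\mathrm{GH}}(\mu,N)$, as needed to invoke Theorem \ref{thm-GH}.

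Next, recall from Corollary \ref{lem-barXRCD} that $(\bar X,\sfd_{\bar X},\mm_{\bar X})$ is itself an $\RCD^*(-\delta,N)$ space (a quotient of the universal cover $\widetilde X$ by the normal subgroup $H = [\bar\pi_1(X),\bar\pi_1(X)]$), so it is a legitimate input for Corollary \ref{cor_EpsReg_EssDim}. Since $\mu \le \delta_0(N)$, the mGH-closeness displayed above in particular gives the GH-closeness hypothesis \eqref{eq_hypEpsReg} of Corollary \ref{cor_EpsReg_EssDim} at the point $x_0 = \bar x$ with parameter $\mu$: indeed $B^{\bar X}_{\mu^{-1}}(\bar x)$ is $\mu$-GH close to $B^{\R^{\lfloor N\rfloor}}_{\mu^{-1}}(0^{\lfloor N\rfloor})$, and $\bar X$ is $\RCD^*(-\delta,N)$ with $\delta \le \delta(\eps,N) \le \delta_0(N)$ (shrinking $\delta(\eps,N)$ further if necessary so that $\delta(\eps,N) \le \delta_0(N)$). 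Corollary \ref{cor_EpsReg_EssDim} then applies verbatim and yields that the essential dimension of $\bar X$ equals $\lfloor N\rfloor$ and that $(\bar X,\sfd_{\bar X},\mm_{\bar X})$ is $\lfloor N\rfloor$-rectifiable as a metric measure space.

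The only mildly delicate point is bookkeeping: one must choose a single $\delta(\eps,N)$ simultaneously dominated by $\delta_0(N)$ and compatible (through the $2\beta$-power) with the threshold $\delta_{\mathrm{GH}}(\mu,N)$ produced by Theorem \ref{thm-GH}, where $\mu = \min\{\delta_0(N),\eps\}$; this is immediate once the quantifiers are unwound. There is no genuine obstacle here, since both ingredients — Theorem \ref{thm-GH} and Corollary \ref{cor_EpsReg_EssDim} — are already in hand; the content of the corollary is simply their composition, together with the observation (Corollary \ref{lem-barXRCD}) that the covering $\bar X$ retains the $\RCD^*$ structure so that the $\eps$-regularity machinery may be run on it rather than on $X$ itself. $\hfill\Box$
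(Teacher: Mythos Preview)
Your proposal is correct and follows essentially the same approach as the paper's own proof: compose Theorem \ref{thm-GH} with Corollary \ref{cor_EpsReg_EssDim}, invoking Corollary \ref{lem-barXRCD} to ensure that $\bar X$ is itself an $\RCD^*$ space so that the $\eps$-regularity result applies to it. The paper's proof is more compressed but identical in structure; your handling of the $2\beta$-power reconciliation is in fact slightly cleaner than the paper's (which sets $\delta(\eps,N)=\delta_1(\eps_1,N)^{1/(2\beta)}$, whereas the exponent should be $2\beta$ as you write).
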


\begin{proof}
Fix $\eps\in (0,1)$ and let $\beta>0$ be as in Theorem \ref{thm-GH}. Let $\eta(\eps,N)$ be given in Corollary \ref{cor_EpsReg_EssDim} and set $\eps_1=\eta(\eps,N)$. Then by Theorem \ref{thm-GH}, there exists $\delta_1(\eps_1,N)>0$ such that for any $\delta \in (0, \delta_1(\eps_1,N)]$ and for any $\RCD^*(-\delta^{2\beta},N)$ space $(X,\sfd,\mm)$ with $\be(X)=\lfloor N \rfloor$ and $\diam(X)=1$, there exists $\bar x \in \bar X$ such that 
$$\sfd_{mGH}(B^{\bar X}_{\eps_1^{-1}}(\bar x), B^{\R^{\lfloor N \rfloor }}_{\eps_1^{-1}}(0^{\lfloor N \rfloor}))\leq \eps_1.$$
As a consequence, $(\bar X, \sfd_{\bar X}, \mm_{\bar X})$ satisfies the assumptions of Corollary \ref{cor_EpsReg_EssDim}, thus it has essential dimension equal to $\lfloor N \rfloor$ and it is $\lfloor N \rfloor$-rectifiable as a metric measure space. It suffices then to choose $\delta(\eps,N)=\delta_1(\eps_1,N)^{\frac{1}{2\beta}}$.         
\end{proof}

We are now in position to prove Theorem \ref{main-delta}. 

\begin{proof}[Proof of Theorem \ref{main-delta}]
Let $\bar p : \bar X \rightarrow X$ be the covering map and denote by $\cR_{\lfloor N \rfloor}(\bar X)$ the $\lfloor N \rfloor$-th regular set of $\bar X$. 
Recall that $\mm_{\bar X}(\bar X \setminus \cR_{\lfloor N \rfloor}(\bar X))=0$. Let $B_{r}^{\bar X}(\bar x)$ be a sufficiently small ball in $\bar X$ such that
$$
\bar p|_{B_{r}^{\bar X}(\bar x)}: B_{r}^{\bar X}(\bar x)\to B_{r}^{X}(\bar p (\bar x))
$$
is an isomorphism of metric measure spaces. Since for $\mm_{\bar{X}}$-a.e. $\bar x'\in B_{r}^{\bar X}(\bar x)$ the tangent cone is unique and equal to $\R^{\lfloor N \rfloor}$, the same is true for $\mm$-a.e. $x'\in  B_{r}^{X}(\bar p (\bar x))$ and thus the regular set $\cR^{\lfloor N \rfloor}$ of $X$ has positive $\mm$-measure. Therefore $(X,\sfd,\mm)$ has essential dimension equal to $\lfloor N \rfloor$ and it is $\lfloor N \rfloor$-rectifiable as a metric measure space. In particular, $\mm \ll \mathcal{H}^{\lfloor N\rfloor}$.

If $N$ is an integer, then  $\mm \ll \mathcal{H}^{N}$ and $(X,\sfd,\mm)$ is a compact weakly non-collapsed $\RCD^{*}(-\delta,N)$ space. Corollary 1.3 in \cite{H19} ensures that for any compact weakly non-collapsed $\RCD^*(-\delta, N)$ space, there exists $c>0$ such that $\mm=c\mathcal{H}^{ N}$, thus concluding the proof. 
\end{proof}

\section{Proof of Theorem \ref{thm:main}, second and third claims}
\label{s-proofMain}

Now we are in position to conclude the proof of Theorem  \ref{thm:main}.  Given a sequence  of $\RCD^*(-K_i,N)$  spaces $(X_i,\sfd_i,\mm_i)$ with $K_i < 0$ tending to zero, $\diam(X_i)=1$ and $\be(X_i)=\lfloor N \rfloor$, the proof consists in applying the results of equivariant pointed Gromov-Hausdorff convergence  as in Section \ref{ss-conv} to the sequence  $(\bar X_i, \sfd_i, \bar x_i)$ and subgroups $\Gamma'_i$  as in Lemma \ref{lem-Gam'}, in order to obtain equivariant convergence (up to a subsequence) to $(\R^b,  \sfd_{\R^b},  0, \mathbb{Z}^b)$.  Then we will conclude that the quotients $\bar X_i /  \Gamma'_i$  mGH converge to a flat torus, which, by applying Theorem \ref{thm-MKbihold}, will imply that for large $i$  the quotients are bi-H\"older homeomorphic to this torus. In the last step we show that $\bar X_i /  \Gamma'_i= X_i$.

We start with the following lemma. 

\begin{lemma}\label{prop-AbealianGroup}
Let $(X_i,  \sfd_i,  x_i,  \Gamma_i) \in \cM^p_{eq}$ be a sequence of spaces that converge in equivariant pGH sense to 
$(X_\infty ,\sfd_\infty , x_\infty,  \Gamma_\infty) \in \cM^p_{eq}$. Assume  $\Gamma_i$ is an abelian group, for each $i\in \N$. Then $\Gamma_\infty$ is an  abelian group as well. 
\end{lemma}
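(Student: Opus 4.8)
The idea is to pass the abelian-ness of each $\Gamma_i$ to the limit through the defining property of EpGH convergence. Fix two elements $\gamma_\infty, \gamma'_\infty \in \Gamma_\infty$; I need to show $\gamma_\infty \gamma'_\infty = \gamma'_\infty \gamma_\infty$, i.e. that these two isometries agree at every point of $X_\infty$. Since $X_\infty$ is proper and isometries are determined by pointwise values, it suffices to check the identity at an arbitrary fixed $z \in X_\infty$. Choose $R$ large enough that $z \in B_{R}(x_\infty)$ and that $\gamma_\infty(x_\infty), \gamma'_\infty(x_\infty), \gamma_\infty\gamma'_\infty(x_\infty), \gamma'_\infty\gamma_\infty(x_\infty)$ all lie in $B_R(x_\infty)$; then run the EpGH definition with $\varepsilon_n \to 0$ small enough that $\varepsilon_n^{-1} > R + 1$ (say), so everything relevant lives inside $B_{\varepsilon_n^{-1}}(x_{\infty})$.

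First I would produce approximants in $\Gamma_n$ of $\gamma_\infty$ and $\gamma'_\infty$: using the maps $\psi_n : \Gamma_\infty(\varepsilon_n^{-1}) \to \Gamma_n$ from the EpGH approximation, set $\gamma_n := \psi_n(\gamma_\infty)$ and $\gamma'_n := \psi_n(\gamma'_\infty)$. Property (5) of Definition \ref{def-eqGH} gives $\sfd_\infty(f_n(\gamma_n x), \gamma_\infty f_n(x)) < \varepsilon_n$ and similarly for $\gamma'_n$, for all relevant $x$; here $f_n : B_{\varepsilon_n^{-1}}(x_n) \to X_\infty$ is the base map and I use the (standard, routine) fact that $f_n$ is an $\varepsilon_n$-approximation admitting an approximate inverse $g_n : B_{\varepsilon_n^{-1}-\varepsilon_n}(x_\infty) \to B_{\varepsilon_n^{-1}}(x_n)$ with $\sfd_\infty(f_n(g_n(w)), w) < \varepsilon_n$. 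Now evaluate at the point $x = g_n(z) \in X_n$: since $\Gamma_n$ is abelian, $\gamma_n \gamma'_n x = \gamma'_n \gamma_n x$ \emph{exactly} in $X_n$. Applying $f_n$ and chaining the approximation inequalities in property (5) twice (once for $\gamma'_n$ then once for $\gamma_n$, using that $\gamma_\infty, \gamma'_\infty$ are $1$-Lipschitz so errors don't amplify), I get
\begin{equation*}
\sfd_\infty\big(\gamma_\infty\gamma'_\infty f_n(x),\, \gamma'_\infty\gamma_\infty f_n(x)\big) \le C\varepsilon_n
\end{equation*}
for an absolute constant $C$ (four applications of property (5) plus triangle inequalities). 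Since $\sfd_\infty(f_n(x), z) = \sfd_\infty(f_n(g_n(z)), z) < \varepsilon_n$ and all four isometries $\gamma_\infty, \gamma'_\infty, \gamma_\infty\gamma'_\infty, \gamma'_\infty\gamma_\infty$ are distance-preserving, replacing $f_n(x)$ by $z$ costs only $O(\varepsilon_n)$, so $\sfd_\infty(\gamma_\infty\gamma'_\infty z, \gamma'_\infty\gamma_\infty z) \le C'\varepsilon_n \to 0$, hence $\gamma_\infty\gamma'_\infty z = \gamma'_\infty\gamma_\infty z$. As $z$ was arbitrary, $\gamma_\infty\gamma'_\infty = \gamma'_\infty\gamma_\infty$, and $\Gamma_\infty$ is abelian.

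The main obstacle — really the only subtle point — is bookkeeping the domains: one must ensure that at each stage the points $x$, $\gamma_n x$, $\gamma'_n x$, $\gamma_n\gamma'_n x$ all lie in $B_{\varepsilon_n^{-1}}(x_n)$ so that property (5) is actually applicable, and that $\gamma_n := \psi_n(\gamma_\infty)$ is defined, i.e. $\gamma_\infty \in \Gamma_\infty(\varepsilon_n^{-1})$. Both hold once $n$ is large, because $\gamma_\infty(x_\infty), \gamma'_\infty(x_\infty)$ are at fixed finite distance from $x_\infty$ while $\varepsilon_n^{-1} \to \infty$, and $z$ is fixed; one just picks a single large $R_0$ controlling all these distances and restricts to $n$ with $\varepsilon_n^{-1} > R_0$. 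With this domain discipline in place the rest is a routine triangle-inequality chase, so I would state the domain choices carefully and then compress the inequality manipulations.
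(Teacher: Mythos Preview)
Your strategy is correct and essentially the same as the paper's: send $\gamma_\infty, \gamma'_\infty$ to approximants in the abelian group $\Gamma_n$, use commutativity there, and pull the identity back through the approximation inequalities. The paper, however, runs the argument with the approximation going the \emph{reverse} direction --- it takes $f_i : B_{\varepsilon_i^{-1}}(x_\infty) \to X_i$ and $\phi_i : \Gamma_\infty(\varepsilon_i^{-1}) \to \Gamma_i$, invoking property (4) rather than (5). The payoff is that all the points to which the approximation is applied, namely $z_\infty$, $\gamma_{\infty 2} z_\infty$, $\gamma_{\infty 1}\gamma_{\infty 2} z_\infty$, are \emph{fixed} elements of $X_\infty$, so the domain hypotheses of (4) hold automatically once $\varepsilon_i^{-1}$ exceeds a single fixed radius.

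Your domain bookkeeping, by contrast, is not quite closed. You need $\gamma'_n x$, $\gamma_n \gamma'_n x \in B_{\varepsilon_n^{-1}}(x_n)$ with $\gamma'_n = \psi_n(\gamma'_\infty)$, but the justification you give --- that $\gamma'_\infty(x_\infty)$ lies at fixed distance from $x_\infty$ --- is a statement about $X_\infty$, whereas what is needed is a bound on $\sfd_n(\gamma'_n x, x_n)$ in $X_n$. The definition gives only $\psi_n : \Gamma_\infty(\varepsilon_n^{-1}) \to \Gamma_n$, with no a priori control on how far $\psi_n(\gamma'_\infty)$ moves the basepoint; deriving such control from (5) itself already presupposes the domain condition, which is circular. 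This can be repaired (e.g.\ by also invoking the reverse approximation to bootstrap the displacement bound), but the cleanest route is simply to switch to the paper's direction, after which the domain issue evaporates.
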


\begin{proof}
Given arbitrary $\gamma_{\infty1}, \gamma_{\infty2} \in \Gamma_\infty$, we will show that they commute. For that,  
by hypothesis there exist $\varepsilon_i$-equivariant pGH approximations  $(f_i,\phi_i,\psi_i)$:
$$f_i:  B^{X_{\infty}}_{\varepsilon_i^{-1}}(x_\infty) \to  X_i, \quad \phi_i:  \Gamma_\infty(\varepsilon_i^{-1}) \to \Gamma_i, \quad \psi_i:  \Gamma_i(\varepsilon_i^{-1}) \to \Gamma_\infty,$$
satisfying the conditions of Definition \ref{def-eqGH} and so that $\varepsilon_i \to 0$. 
\\Take an arbitrary point  $z_\infty \in X_\infty$.   By the triangle inequality and for $i$ large enough such that $z_\infty, \gamma_{\infty 1}z_\infty, \gamma_{\infty 1}\gamma_{\infty 2}z_\infty \in B^{X_{\infty}}_{\varepsilon_i^{-1}}(x_\infty)$ and $\gamma_{\infty 1}\gamma_{\infty 2} \in \Gamma_{\infty}(\varepsilon_i^{-1})$, we get
\begin{align*}
\sfd_{i}  (f_i(\gamma_{\infty1} \gamma_{\infty2} z_\infty), \phi_i(\gamma_{\infty1}) \phi_i(\gamma_{\infty2}) f_i(z_\infty)) \leq &
\sfd_{i}  (f_i(\gamma_{\infty1} \gamma_{\infty2} z_\infty), \phi_i(\gamma_{\infty1}) f_i( \gamma_{\infty2}z_\infty))  \\
 & + \sfd_{i}  ( \phi_i(\gamma_{\infty1}) f_i( \gamma_{\infty2}z_\infty), \phi_i(\gamma_{\infty1}) \phi_i(\gamma_{\infty2}) f_i(z_\infty)).
\end{align*}

Applying (4) of Definition \ref{def-eqGH} and that  $\phi_i(\gamma_{\infty1})$ is an isometry, we see that each term in the right hand side of the previous inequality is bounded above by $\varepsilon_i$. We conclude that  
$$\sfd_{i}  (f_i(\gamma_{\infty1} \gamma_{\infty2} z_\infty), \phi_i(\gamma_{\infty1}) \phi_i(\gamma_{\infty2}) f_i(z_\infty)) \leq 2\varepsilon_i.$$
The same estimate holds reversing the roles of $\gamma_{\infty1}$ and $\gamma_{\infty2}$,  that is:
$$\sfd_{i}  (f_i(\gamma_{\infty2} \gamma_{\infty1} z_\infty), \phi_i(\gamma_{\infty2}) \phi_i(\gamma_{\infty1}) f_i(z_\infty)) \leq 2\varepsilon_i.$$
By the triangle inequality and using that $\Gamma_i$ is abelian, so that $\phi_i(\gamma_{\infty2}) \phi_i(\gamma_{\infty1})= \phi_i(\gamma_{\infty1}) \phi_i(\gamma_{\infty2})$, we get:
$$\sfd_{i}  (f_i(\gamma_{\infty1} \gamma_{\infty2} z_\infty), f_i(\gamma_{\infty2} \gamma_{\infty1} z_\infty)) \leq 4 \varepsilon_i.$$
From (3) of Definition \ref{def-eqGH}, we also have:
$$|\sfd_{\infty}(\gamma_{\infty1}\gamma_{\infty2} z_\infty,   \gamma_{\infty2}\gamma_{\infty1} z_\infty)- \sfd_i(f_i(\gamma_{\infty1}\gamma_{\infty2} z_\infty), f_i(\gamma_{\infty2}\gamma_{\infty1} z_\infty))| < \varepsilon_i.$$
Therefore, when taking the limit as $i\to \infty$ we obtain $\sfd_{\infty}(\gamma_{\infty1}\gamma_{\infty2} z_\infty,   \gamma_{\infty2}\gamma_{\infty1} z_\infty)=0$. 
\\Since $z_\infty\in X_{\infty}$ is an arbitrary point,  we conclude that $\gamma_{\infty1}$ and $\gamma_{\infty2}$ commute. 
\end{proof}

We are now ready to prove the  key result of this section, which directly gives the second claim of Theorem \ref{thm:main} by a standard compactness/contradiction argument.

\begin{proposition}\label{lem-eqGHconv}
Let $N\in (1,\infty)$ and let $(X_i,\sfd_i,\mm_i)$ be a sequence of $\RCD^*(-K_i,N)$ spaces with $\be(X_i)= \lfloor N \rfloor$, $\diam(X_i)=1$ and
$K_i>0  $   such that $K_i \downarrow 0$.   Fix some  $\bar x_i \in \bar X_i$ and let  $\Gamma'_i$ be  as in Lemma \ref{lem-Gam'}, for $k=3$. 
\\Then any Gromov-Hausdorff limit of $X_i'=  \bar X_i/\Gamma'_i$ is isometric to an $\lfloor N \rfloor$-dimensional flat torus.  
\end{proposition}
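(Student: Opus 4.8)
The plan is to combine the mGH-approximation result (Theorem \ref{thm-GH}) with the equivariant Gromov-Hausdorff machinery of Fukaya and Fukaya-Yamaguchi, and then identify the limit group as a lattice in $\R^{\lfloor N\rfloor}$. First I would apply Theorem \ref{thm-GH} to the sequence: for $i$ large, $\delta_i := K_i^{1/(2\beta)}$ is as small as needed, so there exist points $\bar x_i \in \bar X_i$ with $\sfd_{mGH}(B^{\bar X_i}_{\varepsilon_i^{-1}}(\bar x_i), B^{\R^{\lfloor N\rfloor}}_{\varepsilon_i^{-1}}(0^{\lfloor N\rfloor})) \le \varepsilon_i$ for some $\varepsilon_i \downarrow 0$; by Remark \ref{rem:thm-GH} this gives pmGH convergence $(\bar X_i, \sfd_{\bar X_i}, \mm_{\bar X_i}, \bar x_i) \to (\R^{\lfloor N\rfloor}, \sfd_{\R^{\lfloor N\rfloor}}, \mathcal L^{\lfloor N\rfloor}, 0)$ (after possibly renormalising the measures, which is harmless). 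Here one must be slightly careful: $\bar x_i$ is the basepoint produced by Theorem \ref{thm-GH}, which need not coincide with the basepoint used to build $\Gamma_i'$ in Lemma \ref{lem-Gam'}; but since $\Gamma_i'$ depends on the choice of basepoint only up to conjugation by a deck transformation, and the bounds \eqref{eq-lower}--\eqref{eq-upper} are stated relative to an arbitrary point, I would simply apply Lemma \ref{lem-Gam'} \emph{with this same} $\bar x_i$ and $k=3$, obtaining $\Gamma_i' \cong \mathbb Z^{\lfloor N\rfloor}$ with $3\,\mathrm{diam}(X_i) = 3 < \sfd_{\bar X_i}(\gamma(\bar x_i), \bar x_i)$ for all nontrivial $\gamma \in \Gamma_i'$ and $\sfd_{\bar X_i}(\tilde\gamma_{i,j}(\bar x_i), \bar x_i) \le 6$ for the generators.

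Next I would invoke Fukaya-Yamaguchi's Theorem \ref{thm-GHtoEq}: since $(\bar X_i, \sfd_{\bar X_i}, \bar x_i) \to (\R^{\lfloor N\rfloor}, \sfd_{\R^{\lfloor N\rfloor}}, 0)$ in the pointed GH sense and $\Gamma_i' \le \mathrm{Iso}(\bar X_i)$ is closed (Remark \ref{rem-Gamma'closed}), a subsequence of $(\bar X_i, \sfd_{\bar X_i}, \bar x_i, \Gamma_i')$ converges in the EpGH sense to $(\R^{\lfloor N\rfloor}, \sfd_{\R^{\lfloor N\rfloor}}, 0, \Gamma_\infty)$ for some closed subgroup $\Gamma_\infty \le \mathrm{Iso}(\R^{\lfloor N\rfloor})$. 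By Lemma \ref{prop-AbealianGroup}, $\Gamma_\infty$ is abelian. Then Theorem \ref{thm-eqGHtoOrb} gives $X_i' = \bar X_i/\Gamma_i' \to \R^{\lfloor N\rfloor}/\Gamma_\infty$ in the pointed GH sense, so any GH-limit of $X_i'$ (after passing to the relevant subsequence; note the $X_i'$ are compact with uniformly bounded diameter by Corollary \ref{rmk:disjBalls}-type bounds, hence precompact, and the limit is unique along the chosen subsequence) is $\R^{\lfloor N\rfloor}/\Gamma_\infty$. The remaining task is to show $\Gamma_\infty$ is a cocompact lattice of rank exactly $\lfloor N\rfloor$, i.e. $\R^{\lfloor N\rfloor}/\Gamma_\infty$ is a flat $\lfloor N\rfloor$-torus.

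The main obstacle — and the heart of the argument — is pinning down $\Gamma_\infty$. An abelian closed subgroup of $\mathrm{Iso}(\R^{\lfloor N\rfloor})$ that acts freely and properly discontinuously with compact quotient must be a lattice $\cong \mathbb Z^{\lfloor N\rfloor}$ of translations (Bieberbach), so I need: (i) $\Gamma_\infty$ acts freely and properly discontinuously; (ii) the quotient is compact; (iii) $\mathrm{rank}(\Gamma_\infty) = \lfloor N\rfloor$. For (i)--(ii): the lower bound \eqref{eq-lower} with $k=3$ passes to the limit to show every nontrivial element $\gamma_\infty \in \Gamma_\infty$ satisfies $\sfd_{\R^{\lfloor N\rfloor}}(\gamma_\infty(0),0) \ge 3$ — more precisely, moves \emph{every} point by a definite amount, since the displacement function of a limit isometry is controlled — which forces discreteness and freeness; compactness of $X_i' = \bar X_i/\Gamma_i'$ with uniformly bounded diameter (each $X_i'$ is covered by $X_i$ with finite fibre, so $\mathrm{diam}(X_i') \le \mathrm{diam}(X_i) \cdot [\Gamma_i:\Gamma_i'] $ — better: $X_i'$ covers $X_i$ with finite fibre bounded using \eqref{eq-distlessDiam}, giving a uniform diameter bound) passes to the limit to give compactness of $\R^{\lfloor N\rfloor}/\Gamma_\infty$. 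For (iii): the upper bound \eqref{eq-upper} on the $\lfloor N\rfloor$ generators shows $\phi_i$ maps them to $\lfloor N\rfloor$ elements of $\Gamma_\infty$ lying in a bounded set; a volume/packing argument (in the spirit of Corollary \ref{rmk:disjBalls}, using that the $X_i'$-balls of a fixed radius have volume bounded below by Bishop-Gromov on $X_i$ and the covering) shows these limit elements generate a rank-$\lfloor N\rfloor$ subgroup, while $\mathrm{rank}(\Gamma_\infty) \le \lfloor N\rfloor$ follows because $\R^{\lfloor N\rfloor}/\Gamma_\infty$ has Hausdorff dimension $\lfloor N\rfloor$. Hence $\Gamma_\infty$ is a lattice of full rank, $\R^{\lfloor N\rfloor}/\Gamma_\infty$ is a flat $\lfloor N\rfloor$-torus, and the proof is complete.
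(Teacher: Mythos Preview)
Your proposal is correct and follows the same overall strategy as the paper: use Theorem~\ref{thm-GH} to get $(\bar X_i,\bar x_i)\to(\R^{\lfloor N\rfloor},0)$, apply Fukaya--Yamaguchi (Theorem~\ref{thm-GHtoEq}) to extract an equivariant limit $(\R^{\lfloor N\rfloor},0,\Gamma_\infty)$ with $\Gamma_\infty$ abelian, and then identify $\Gamma_\infty$ as a rank-$\lfloor N\rfloor$ lattice so that Theorem~\ref{thm-eqGHtoOrb} gives a flat torus.

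The only real difference is in how $\Gamma_\infty$ is pinned down. The paper avoids Bieberbach and any a~priori diameter bound on $X_i'$: it first proves (Step~1) the uniform displacement bound $\sfd_{\bar X_i}(\gamma' y,y)>1$ for every nontrivial $\gamma'\in\Gamma_i'$ and \emph{every} $y\in\bar X_i$ (this uses $k=3$ together with $\diam(X_i)=1$, exactly the argument you gesture at), and then (Step~2) passes the generators $\tilde\gamma_{ij}$ to Arzel\`a--Ascoli limits $\gamma_{\infty j}\in\Gamma_\infty$, checks they generate $\Gamma_\infty$, and shows directly that any nontrivial word $\gamma_{\infty 1}^{s_1}\cdots\gamma_{\infty b}^{s_b}$ moves every point by $\geq 1$, hence is nontrivial, so $\Gamma_\infty\cong\Z^{\lfloor N\rfloor}$. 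This is cleaner than your route: your uniform diameter bound on $X_i'$ is not justified (neither of your two attempted estimates is correct as written, and $[\Gamma_i:\Gamma_i']$ is not obviously uniformly bounded in $i$), and the ``volume/packing argument'' you propose for the rank is unnecessary once the displacement bound is in hand --- independence of the limit generators follows immediately from it.
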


\begin{remark}\label{rmrk-RicDiam}
In Proposition \ref{lem-eqGHconv} we require $\diam(X_i)=1$ instead of the bound $K_i\diam(X_i)^{2} \downarrow 0$. To show that the latter condition is {\bf not} enough, consider a sequence $X_i$ of manifolds with $K_i=  i$ and $\diam(X_i)=i^{-1}$. Then $K_i\diam(X_i)^{2} \downarrow 0$ but any GH limit of this sequence collapses due to $\diam(X_i) \to 0$.   We could also consider manifolds $X_i$ with $K_i=i^{-3}$ and $\diam(X_i)=i$ then $K_i\diam(X_i)^{2} \downarrow 0$ and any GH converging subsequence has a limit space with infinite diameter.  Hence, it is necessary to have two sided uniform bounds on $\diam(X_i)$ and for simplicity we set them equal to 1.
\end{remark}

\begin{proof}[Proof of Proposition \ref{lem-eqGHconv}]
Set $b:=\lfloor N \rfloor= \be(X_i)$. For simplicity of notation, we will not relabel subsequences. 
By Theorem \ref{thm-GH} and Remark \ref{rem:thm-GH}, the sequence  $(\bar X_i,  \sfd_{ \bar X_i}, \bar x_i)$ converges in pointed Gromov-Hausdorff sense to 
$(\mathbb R^b,  \sfd_{\R^b}, 0^b)$. 
By Gromov's compactness Theorem and stability of the $\RCD^{*}(0,N)$ condition, there exists an $\RCD^{*}(0,N)$ space $(X,\sfd_{X}, \mm_{X})$ with $\diam(X)=1$ such that $X_{i}\to X$ in mGH sense, up to a subsequence.  
From Remark \ref{rem-Gamma'closed} we know that, for any $i\in \N$, the groups $\Gamma_i'$ given by Lemma \ref{lem-Gam'} are closed. Thus, by Theorem \ref{thm-GHtoEq} there exist a group of isometries  of $\R^b$, $\Gamma'_\infty$, and a subsequence  $(\bar X_i,  \sfd_{ \bar X_i}, \bar x_i, \Gamma'_i)$ that converges 
in the equivariant  pointed Gromov-Hausdorff sense to $(\mathbb R^b, \sfd_{\mathbb R^b},0^{b}, \Gamma'_\infty)$. 
Moreover, $\R^{b}$ is the universal cover of $X$, and $\Gamma'_\infty$ is contained in the corresponding group of deck transformations. 

We will show that $\mathbb R^b/\Gamma'_\infty$ is a flat torus. To this aim, we prove that $\Gamma'_\infty$ is isomorphic to $\mathbb Z^b$. 
\\

{\bf Step 1}. We claim that
\begin{equation}\label{eq:ClaimStep1FT}
\sfd_{\R^b} (\gamma_\infty y_\infty,  y_\infty) \geq  1, \quad \text{for all } y_\infty \in \mathbb R^b \;  \text{ and for all }  \gamma_\infty \in \Gamma'_\infty, \;  \gamma_\infty\neq {\rm id}.
\end{equation}
Let $(f_i,\phi_i,\psi_i)$ be equivariant
$\varepsilon_i$-pGH approximations,    $\varepsilon_i \to 0$, as in Definition \ref{def-eqGH}:
$$
f_i:  B^{\R^{b}}_{\varepsilon_i^{-1}}(0^{b}) \to \bar X_i, \quad \phi_i:  \Gamma'_\infty(\varepsilon_i^{-1}) \to \Gamma'_i \quad \psi_i:  \Gamma'_i(\varepsilon_i^{-1}) \to \Gamma'_\infty.
$$
To prove \eqref{eq:ClaimStep1FT}, we first show that the claim holds for all non trivial $\gamma_i \in \Gamma'_i$ and all $y_i \in \bar X_i $, $i \in \mathbb N$.  Then a convergence argument will show that the claim holds.

Since $\diam(X_i)=1$, for all $i\in \mathbb N$ and $y_i \in \bar X_i$ there exists $\gamma \in \Gamma_i'$ such that $\sfd_{\bar X_i}(\gamma \bar x_i,y_i) \leq 1$. Moreover, by Lemma \ref{lem-Gam'} for any $\gamma' \in \Gamma_i' \setminus\{{\rm id}\}$, we have $3 < \sfd_{\bar X_i}(\gamma' \bar x_i,\bar x_i)$. 
Then, by the triangle inequality, 
\begin{align*}
3 <  \sfd_{\bar X_i}(\gamma' \bar x_i,\bar x_i) = & \sfd_{\bar X_i}(\gamma' \gamma \bar x_i, \gamma \bar x_i) \\
\leq  & \sfd_{\bar X_i}(\gamma' \gamma \bar x_i, \gamma' y_i)   +  \sfd_{\bar X_i}(\gamma' y_i,  y_i)  + \sfd_{\bar X_i}(y_i, \gamma \bar x_i) \\
\leq & 2 + \sfd_{\bar X_i}( \gamma'  y_i,  y_i).
\end{align*}
Therefore:
\begin{equation}\label{eq:claimXiStep1}
\sfd_{\bar X_i}(\gamma' y_i,  y_i) > 1, \quad \text{for all  $\gamma' \in \Gamma'_i\setminus\{{\rm id}\}$ and $y_i\in \bar X_i$}.   
\end{equation}
Now let $\gamma_\infty \in \Gamma'_\infty\setminus \{\rm id\}$ and $y_\infty \in \mathbb R^b$. For $i$ large enough, $\gamma_\infty y_\infty, y_\infty \in \Gamma'_\infty(\varepsilon_i^{-1})$ and then 
by (3) of Definition \ref{def-eqGH} it holds:
\begin{equation}\label{eq1claim2}
\sfd_{\R^b}(\gamma_\infty y_\infty, y_\infty) > -\vare_i + \sfd_{\bar X_i}(f_i(\gamma_\infty y_\infty), f_i(y_\infty)).
\end{equation}
By (4) of Definition \ref{def-eqGH},  we also have:
\begin{equation}\label{eq2claim2}
\sfd_{\bar X_i}(f_i(\gamma_\infty y_\infty),  \phi_i(\gamma_\infty)f_i(y_\infty)) < \varepsilon_i.
\end{equation}
Combining  \eqref{eq1claim2}, the triangle inequality and \eqref{eq2claim2} we get
\begin{equation}
\label{eq3claim2}
\begin{split}
\sfd_{\R^b}(\gamma_\infty y_\infty, y_\infty) > & -\vare_i +  \sfd_{\bar X_i}(f_i(y_\infty),  \phi_i(\gamma_\infty)f_i(y_\infty)) - \sfd_{\bar X_i}(f_i(\gamma_\infty y_\infty),  \phi_i(\gamma_\infty)f_i(y_\infty))\\
>   &  \sfd_{\bar X_i}(f_i(y_\infty),  \phi_i(\gamma_\infty)f_i(y_\infty)) - 2\vare_i. 
\end{split}
\end{equation}
If we show that  $\phi_i(\gamma_\infty)\neq {\rm id}$ then we have that $\sfd_{\bar X_i}(\phi_i(\gamma_{\infty})f_i(y_\infty),f_i(y_\infty))>1$ and by passing to the limit we will be able to conclude the proof of the claim. We are going to prove that $\sfd_{\bar X_i}(\phi_i(\gamma_{\infty})f_i(y_\infty),f_i(y_\infty))>0$, so that $\phi_i(\gamma_{\infty})\neq {\rm id}$. By the triangle inequality, arguing as in \eqref{eq1claim2}  and using \eqref{eq2claim2} we get
\begin{equation}\label{eq4claim2}
\begin{split}
\sfd_{\bar X_i}(f_i(y_\infty),  \phi_i(\gamma_\infty)f_i(y_\infty)) \geq   &
\sfd_{\bar X_i}(f_i(y_\infty),  f_i(\gamma_\infty y_\infty)) - \sfd_{\bar X_i}(  f_i(\gamma_\infty y_\infty),   \phi_i(\gamma_\infty)f_i(y_\infty) ) \\
\geq   &
\sfd_{\R^b}(y_\infty,  \gamma_\infty y_\infty) -  2 \vare_i.
\end{split}
\end{equation}
Since by hypothesis $\gamma_\infty$ is a non trivial isometry and elements in the deck transformations do not fix points, we have $\sfd_{\R^b}(y_\infty,  \gamma_\infty y_\infty) > 0$. 
Thus by \eqref{eq4claim2} for sufficiently large $i$, $\sfd_{\bar X_i}(f_i(y_\infty),  \phi_i(\gamma_\infty)f_i(y_\infty)) >0$.  This shows that $\phi_i(\gamma_\infty)$ is non trivial and thus  $\sfd_{\bar X_i}(f_i(y_\infty),  \phi_i(\gamma_\infty)f_i(y_\infty)) >1$. Therefore, as $i \to \infty$, inequality \eqref{eq3claim2} implies the claim \eqref{eq:ClaimStep1FT}.
\\

{\bf Step 2}. We show that  $\Gamma'_{\infty} \cong \dZ^{b}$. 
\\From Lemma  \ref{lem-Gam'} we know that $\Gamma'_i \cong \dZ^{b}$. Let $\{\gamma_{ij}\}_{j=1}^b$ be a set of generators for $\Gamma'_i$.

 By the Arzel\'a-Ascoli theorem there exist a  subsequence  $(\bar X_{i_k},  \sfd_{\bar X_{i_k}}, \bar x_{i_k}, \Gamma'_{i_k})$ 
and corresponding subsequences of isometries $\{\gamma_{i_k1}\}_{k=1}^\infty, \dots, \{\gamma_{i_k b}\}_{k=1}^\infty$ that converge to 
$\gamma_{\infty 1}, \dots, \gamma_{\infty b}   \in \Gamma'_\infty$, respectively. We are going to show that $\{\gamma_{\infty j}\}_{j=1}^b$ are independent generators of $\Gamma'_{\infty}$ and that they have infinite order. 

To simplify notation consider that the whole sequence converges.  Given $\gamma_\infty  \in  \Gamma_\infty'$, notice that  $\phi_i(\gamma_\infty)   \to  \gamma_\infty$ in Arzel\'a-Ascoli sense.  Indeed, for all  $z \in \R^b$ and  $z_i  \in \bar X_i$  such that $\sfd_{\bar X_i}(f_i(z), z_i)  \to  0$,
by using the triangle inequality and (4) in Definition \ref{def-eqGH}, and since $\phi_i(\gamma_\infty)$ is an isometry, we have
\begin{align*}
\sfd_{\bar X_i}( \phi_i(\gamma_\infty) z_i,  f_i( \gamma_\infty  z))  \leq &
\sfd_{\bar X_i}( \phi_i(\gamma_\infty) z_i,  \phi_i(\gamma_\infty) f_i(z))  +    \sfd_{\bar X_i}(\phi_i(\gamma_\infty) f_i(z),   f_i( \gamma_\infty  z)) \\
\leq  &   \sfd_{\bar X_i}( z_i,  f_i(z) ) +  \vare_i   \to  0.
\end{align*} 
Moreover, for any $\gamma_\infty \in  \Gamma'_\infty$, there exist $s_1, \ldots s_b \in \mathbb{Z}$ such that $\phi_i(\gamma_\infty)= \gamma_{i 1}^{s_1} \cdots  \gamma_{i b}^{s_b}$. Then we know that the left hand side of the previous equation converges to $\gamma_\infty$, while the right hand side converges to  $\gamma_{\infty 1}^{s_1} \cdots  \gamma_{\infty b}^{s_b}$.   Thus, any $\gamma_\infty \in \Gamma'_{\infty}$  can be written as a composition of elements in $\{\gamma_{\infty j} \}_{j=1}^b$. 

We next show that $\{\gamma_{\infty j} \}_{j=1}^b$ are independent and have infinite order. Let $(s_{1},\ldots, s_{b})\in \mathbb Z^{b} \setminus \{(0, \ldots, 0)\}$. We claim that  $\gamma_{\infty 1}^{s_1} \cdots  \gamma_{\infty b}^{s_b}\neq {\rm id}$. From the previous arguments, we know that $\gamma_{i 1}^{s_1} \cdots  \gamma_{i b}^{s_b} \to \gamma_{\infty 1}^{s_1} \cdots  \gamma_{\infty b}^{s_b}$ as $i\to \infty$.
Since $\{\gamma_{i j} \}_{j=1}^b$ are independent generators of $\Gamma_{i}'\cong \dZ^{b}$, we have that $\gamma_{i 1}^{s_1} \cdots  \gamma_{i b}^{s_b}\neq {\rm id}$.
Hence, from \eqref{eq:claimXiStep1} it follows that  
 \begin{equation*} 
 1  <  \sfd_{\bar X_i}( \gamma_{i 1}^{s_1} \cdots  \gamma_{i b}^{s_b}\,  f_{i}(z), \,  f_{i}(z))   \to  \sfd_{\R^b}( \gamma_{\infty 1}^{s_1} \cdots  \gamma_{\infty b}^{s_b} \, z, \,  z), \quad \text{for all $z\in \R^{b}$, }
 \end{equation*}
  and thus   $\gamma_{\infty 1}^{s_1} \cdots  \gamma_{\infty b}^{s_b}\neq {\rm id}$.

In conclusion, by the fundamental theorem of finitely generated abelian groups, we infer that $\Gamma_\infty'\cong \dZ^{b}$. Thus, $\R^b / \Gamma_\infty'$ is a $b$-dimensional flat torus. The proposition follows now by Theorem \ref{thm-eqGHtoOrb}.
 \end{proof}

\begin{corollary}\label{lem-torus}
For all $N \in \mathbb N$, $N > 1$, there exists $\varepsilon(N) > 0$ with the following property.
Let $(X,\sfd, \mathcal H^N)$ be a compact $\RCD^{*}(K,N)$ space with $K \diam^2 (X) > - \varepsilon(N)$ and $\be(X) =N$.
Then $X':= \bar X/ \Gamma'$ is bi-H\"older homeomorphic to an $N$-dimensional flat torus, where   $\Gamma'$ is given  by Lemma \ref{lem-Gam'}, for $k=3$.
\end{corollary}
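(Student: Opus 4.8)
The plan is to run a compactness/contradiction argument at scale $\diam(X)=1$, which is the normalisation we may assume after rescaling the metric (recall $K\diam(X)^2$ is scale-invariant). Suppose the statement fails; then there is a sequence of compact $\RCD^*(-K_i,N)$ spaces $(X_i,\sfd_i,\cH^N)$ with $K_i\downarrow 0$, $\diam(X_i)=1$, $\be(X_i)=N$, but $X_i':=\bar X_i/\Gamma_i'$ (with $\Gamma_i'$ given by Lemma \ref{lem-Gam'} for $k=3$) is not bi-H\"older homeomorphic to any $N$-dimensional flat torus. First I would invoke Proposition \ref{lem-eqGHconv}: up to a subsequence, $X_i'$ converges in the Gromov-Hausdorff sense to an $N$-dimensional flat torus $\mathbb{T}^N=\R^N/\Gamma_\infty'$, and moreover the equivariant convergence $(\bar X_i,\sfd_{\bar X_i},\bar x_i,\Gamma_i')\to(\R^N,\sfd_{\R^N},0^N,\Gamma_\infty')$ holds.

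Next I would upgrade this GH convergence to mGH convergence of $X_i'$ to $(\mathbb{T}^N,\sfd_{\mathbb{T}^N},c\,\cH^N)$ for a suitable constant $c>0$. Here the key inputs are: (i) by Lemma \ref{lem-RCDpasses} (or Corollary \ref{lem-barXRCD} applied on the cover and on the quotient), each $X_i'$ is again an $\RCD^*(-K_i,N)$ space; (ii) by Theorem \ref{main-delta}, since $N\in\N$ and $K_i$ is small enough for large $i$, we have $\mm_{X_i'}=c_i\cH^N$ — actually one should argue that the lifted measure on $X_i'$ is a constant multiple of $\cH^N$, which follows because $X_i'$ is a compact weakly non-collapsed $\RCD^*(-K_i,N)$ space via Theorem \ref{main-delta}/Theorem \ref{thm-Honda} (normalise $c_i=1$ by scaling the measure, which does not affect the $\RCD$ condition); (iii) then De Philippis-Gigli's volume convergence Theorem \ref{thm-DPhG} applies: since the limit torus has positive $N$-dimensional Hausdorff measure, $\cH^N(X_i')$ converges to $\cH^N(\mathbb{T}^N)$ and the convergence is mGH. (If one insists on $\cH^N$ normalisation without rescaling, the limiting constant $c$ is just $\lim_i\cH^N(X_i')/\cH^N(\mathbb{T}^N_i)$ for the limiting lattice; either way the limit is a non-collapsed flat torus.)

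Finally I would apply Theorem \ref{thm-MKbihold} with $M=\mathbb{T}^N$ (a compact smooth $N$-manifold without boundary): there is $\eps(\mathbb{T}^N)>0$ such that any pointed $\RCD^*(K,N)$ space that is $\eps$-pmGH close to $\mathbb{T}^N$ is bi-H\"older homeomorphic to $\mathbb{T}^N$ (and its measure is a constant times $\cH^N$). For $i$ large enough $X_i'$ is an $\RCD^*(-K_i,N)$ space which is pmGH close to $\mathbb{T}^N$ by the previous step, so $X_i'$ is bi-H\"older homeomorphic to the flat torus $\mathbb{T}^N$, contradicting our assumption. This yields the existence of the threshold $\eps(N)$ and completes the proof.

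The main obstacle I expect is the upgrade from GH to mGH convergence with control on the measure of the quotients $X_i'$: one needs to know that on $X_i'$ the reference measure is (a constant multiple of) $\cH^N$, which requires carefully transferring the weak non-collapsing property established for $\bar X_i$ (and hence locally for $X_i$ and $X_i'$) in Theorem \ref{main-delta}, and then checking that the limit flat torus is genuinely non-collapsed (i.e. that no collapse of the $\Gamma_i'$-orbits occurs, which is precisely guaranteed by the uniform bounds \eqref{eq-lower} and \eqref{eq-upper} in Lemma \ref{lem-Gam'} feeding into Proposition \ref{lem-eqGHconv}) so that alternative (i) rather than (ii) of Theorem \ref{thm-DPhG} occurs. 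Once the non-collapsed mGH convergence to $\mathbb{T}^N$ is in hand, the bi-H\"older conclusion is an immediate application of Theorem \ref{thm-MKbihold}.
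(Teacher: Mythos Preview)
Your proposal is correct and follows essentially the same contradiction/compactness route as the paper: rescale to $\diam=1$, apply Proposition~\ref{lem-eqGHconv} to get GH-convergence of $X_i'$ to a flat torus, upgrade to mGH via Theorem~\ref{thm-DPhG}, then conclude with Theorem~\ref{thm-MKbihold}.

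One remark on the step you flag as the ``main obstacle'': it is actually easier than you suggest. The hypothesis already reads $(X,\sfd,\cH^N)$, so $\mm_X=\cH^N$ from the start. Since the covering maps $\bar X\to X'\to X$ are local isometries and the lifted measure is defined precisely by pushing $\mm_X$ forward through these local isometries (see Section~\ref{ss:LiftMMS}), the lifted measure on $X'$ is exactly $\cH^N$ (Hausdorff measure is intrinsic to the metric). Hence you can apply Theorem~\ref{thm-DPhG} to $(X_i',\sfd_{X_i'},\cH^N)$ directly, without invoking Theorem~\ref{main-delta} or Theorem~\ref{thm-Honda}; this is what the paper does. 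The non-collapsing alternative~(i) in Theorem~\ref{thm-DPhG} holds because the limit is an $N$-dimensional flat torus, which has $\cH^N(B_1)>0$.
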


\begin{proof}

Suppose by contradiction that there is no such $\vare(N)>0$. Then there exists a sequence of 
compact $\RCD^*(K,N)$ spaces $(X_i,\sfd_i, \mathcal H^N)$ with  $K_i \diam^2 (X_i) > - \varepsilon_i$, $\be(X_i) =N$, $\vare_i  \to  0$ such that none of the $X_i$  is
 bi-H\"older homeomorphic to a flat torus of dimension $N$.  Consider the rescaled spaces $(X_i', \sfd_i', \cH^N):=(X_i,  \diam(X_i)^{-1} \sfd_i, \mathcal H^N )$. Clearly $X_{i}'$ has diameter equal to 1 and it is an $\RCD^*( K_i \diam^2 (X_i, \sfd_i), N)$   space with $\be(X_{i}')=N$.  Thus we can apply Proposition \ref{lem-eqGHconv}  and infer that any GH-limit is a flat torus $\mathbb T^N$. 
  
Moreover, from Theorem \ref{thm-DPhG} (i) we have that $(X_i', \sfd_i', \cH^N)$ converges in mGH sense to  $(\mathbb T^N, \sfd_{\mathbb T^{N}}, \mathcal H^N)$. For $i$ large enough so that $\sfd_{mGH}(X'_i, \mathbb{T}^N) \leq \eps(\mathbb{T}^N)$, we can apply Theorem  \ref{thm-MKbihold} and get that $X_i'$ is bi-H\"older homeomorphic to $\mathbb T^N$. When scaling back to the original metric,  the same conclusion holds.  This is a contradiction.

\end{proof}

We can now conclude the proof of the main theorem. 

\begin{proof}[Proof of the third claim of  Theorem  \ref{thm:main}, i.e. when $N \in \N$]
If $N=1$, the claim holds trivially (see Remark \ref{rem:N=1});  thus, we can assume $N\geq 2$ without loss of generality.
\\ From Corollary \ref{lem-torus}, we know that $(\bar{X}, \sfd_{\bar X})$ is locally (on arbitrarily large compact subsets)  bi-H\"older homeomorphic to $\R^N$ (thus in particular it has the integral homology of a point) and $\mm_{\bar X}$ is a constant multiple of the $N$-dimensional Hausdorff measure $\mathcal H^{N}$.
By construction, we also know that the abelianised revised fundamental group $\Gamma:=\bar{\pi}_1(X)/H$ acts by deck transformations on $\bar{X}:=\widetilde{X}/H$ and that $X=\bar{X}/\Gamma$. Thus, summarising:
\begin{equation}\label{eq:GammaNoFixPoints}
\begin{split}
&\text{ $(\bar X,\sfd_{\bar X})$ is a topological manifold with the integral homology of a point} \\
& \qquad \qquad  \text{and  the action of $\Gamma$ on $\bar X$ has no fixed points.}
\end{split}
\end{equation} 
In order to prove that $(X,\sfd)$ is bi-H\"older homeomorphic to a flat torus and that $\mm$ is a constant multiple of $\mathcal H^{N}$, it is  enough to prove that $\Gamma \cong \mathbb Z^N$. Since $\Gamma$ is a finitely generated abelian group (recall Proposition \ref{prop:RevFundGFinGen}), it is sufficient to show that $\Gamma$ has no subgroup isomorphic to $\mathbb Z/ p \mathbb Z$ with $p$ prime. This follows from \eqref{eq:GammaNoFixPoints}: indeed,  from Smith theory (see for instance \cite[Chap. 3]{bredon}), if $\mathbb Z/ p \mathbb Z$, with $p$ prime, acts on a topological manifold with the $\mod p$ homology of a point then the set of fixed points is non empty. \end{proof}


\section{Appendix: some basic properties of mGH approximations}

For the reader's convenience, in this appendix we recall some well known properties of mGH approximations used in the paper.

\begin{proposition}[Restriction of mGH approximations]\label{prop-restrmGH}
Fix $K\in \R$, $N\in (1,\infty)$ and $V>0$. 
Then there exists a constant $C=C(K,N,V)>0$ with the following properties.
Let   $(X,\sfd_{X},\mm_{X})$ and $(Y,\sfd_{Y},\mm_{Y})$  be $\CD^{*}(K,N)$ spaces. Assume that $V^{-1}\leq \mm_{X}(B^{X}_{R}(x))\leq V$   and that there exists an $\varepsilon$-mGH approximation
 $$\phi :  B^{X}_{R}(x) \to  B^{Y}_{R}(y), \; \text{ with $\phi(x)=y$}. $$  
Let  $r\in (0,\vare)$ and $y' \in Y$  with $\sfd_Y(y',y)\leq  R-r + 2\varepsilon$, so that
$B^Y_{r}(y')  \subset  B^Y_{R}(y)$  and thus  we can choose  $x'  \in B^{X}_{R}(x)$ such that 
\begin{equation}\label{eq-xk+1}
\sfd_{Y}(\phi(x'),  y') < \varepsilon
\end{equation} 
and  $B^{X}_{r}(x') \subset B^{X}_{R}(x)$.
Then the function   $\varphi: B^{X}_{r}(x') \to B^{Y}_{r}(y')$ given by 
\begin{equation}\label{eq:defRestr}
 \varphi(z)=
 \begin{cases}
\phi(z)   & \text{if }   \phi(z) \in B^{Y}_{r}(y')\\
w \;  \text{ for some  $w \in \partial B^{Y}_{r}(y')$ with }  \, \sfd_{Y}(w, \phi(z))= \sfd_{Y}(B^{Y}_{r}(y'), \phi(z))  & \text{ otherwise.} 
\end{cases}
\end{equation}
is a $C\vare$-mGH approximation. 
\end{proposition}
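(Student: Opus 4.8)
The plan is to verify each of the four defining properties of an $\vare$-mGH approximation for the restricted map $\varphi$, tracking how the error constants accumulate, and to close with the measure-convergence (fourth) item using the lower mass bound $\mm_X(B_R^X(x))\geq V^{-1}$ and Bishop--Gromov volume comparison. First I would record the elementary point that $\varphi$ agrees with $\phi$ on the (large) subset $\phi^{-1}(B_r^Y(y'))$ and otherwise replaces a point $\phi(z)\notin B_r^Y(y')$ by a nearest point $w$ on $\partial B_r^Y(y')$; since $(Y,\sfd_Y)$ is proper and geodesic (Remark \ref{rem:ProperGeod}), such a nearest point exists, and by the triangle inequality $\sfd_Y(w,\phi(z'))\le \sfd_Y(\phi(z),\phi(z'))+\sfd_Y(\phi(z),w)$ for any $z'$; moreover if $z$ lies in $B_r^X(x')$ then, using \eqref{eq-xk+1} and the near-isometry property of $\phi$, one shows $\phi(z)$ is within $O(\vare)$ of $B_r^Y(y')$, so that the "projection correction" $\sfd_Y(\varphi(z),\phi(z))$ is itself $O(\vare)$. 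This single observation — that on the domain $B_r^X(x')$ the correction term is $O(\vare)$ — is the heart of the argument and reduces everything to the distortion and surjectivity already known for $\phi$.

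Next I would check the four items in order. (1) Base point: choose $\varphi(x')$ to be $\phi(x')$ if it lies in $B_r^Y(y')$ (it is within $\vare$ of $y'$ by \eqref{eq-xk+1}, and $r<\vare$... here one must be slightly careful, but up to replacing $x'$ by a point with $\sfd_Y(\phi(x'),y')$ even smaller, or simply absorbing this into the constant $C$, the base point is mapped within $C\vare$ of $y'$; alternatively one takes $y'$ as the designated image and notes $\sfd_Y(\varphi(x'),y')\le C\vare$, which is the usual convention when $f$ need not fix the base point exactly, cf. Definition of pmGH approximation). (2) Distortion: for $z_1,z_2\in B_r^X(x')$,
\begin{equation*}
|\sfd_X(z_1,z_2)-\sfd_Y(\varphi(z_1),\varphi(z_2))|\le |\sfd_X(z_1,z_2)-\sfd_Y(\phi(z_1),\phi(z_2))|+\sfd_Y(\varphi(z_1),\phi(z_1))+\sfd_Y(\varphi(z_2),\phi(z_2))\le \vare+2\cdot O(\vare),
\end{equation*}
using the first observation. (3) Surjectivity: given $w\in B_{r-C\vare}^Y(y')$, lift it via near-surjectivity of $\phi$ to some $z\in B_R^X(x)$ with $\sfd_Y(\phi(z),w)<\vare$; then $z\in B_r^X(x')$ because $\sfd_X(z,x')\le \sfd_Y(\phi(z),\phi(x'))+\vare\le \sfd_Y(w,y')+O(\vare)$, and $\varphi(z)=\phi(z)$ (since $\phi(z)\in B_r^Y(y')$ once $C\vare$ is small), so $\varphi(z)$ is $O(\vare)$-close to $w$. (4) Measure: here one writes $\varphi_\sharp(\mm_X\llcorner B_r^X(x'))$ and compares with $\phi_\sharp(\mm_X\llcorner B_r^X(x'))$; the two differ only on the set where $\phi(z)\notin B_r^Y(y')$, whose $\mm_X$-measure is controlled by $\mm_X(B_r^X(x')\setminus \phi^{-1}(B_r^Y(y')))$, and one shows this sits inside an $O(\vare)$-annulus $B_{r}^X(x')\setminus B_{r-C\vare}^X(x')$ whose measure is $\le \vare(\vare|K,N,V)\,\mm_X(B_R^X(x))\le V\,\vare(\vare|K,N,V)$ by Bishop--Gromov (Theorem \ref{thm-BishopGromov}) and the doubling property; weak convergence of $\phi_\sharp(\mm_X\llcorner B_R^X(x))$ to $\mm_Y\llcorner B_R^Y(y)$ then transfers, after restriction and the small correction, to weak convergence of $\varphi_\sharp(\mm_X\llcorner B_r^X(x'))$ towards $\mm_Y\llcorner B_r^Y(y')$.

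The main obstacle I anticipate is item (4): one needs the pushforward of the "bad set" (points sent by $\phi$ just outside $B_r^Y(y')$ but whose preimage is inside $B_r^X(x')$) to have vanishing mass as $\vare\to 0$, and this genuinely requires a quantitative volume bound on thin annuli, which is exactly why the hypothesis $V^{-1}\le \mm_X(B_R^X(x))\le V$ and the $\CD^*(K,N)$ assumption (via Bishop--Gromov) enter; the constant $C=C(K,N,V)$ is produced here. A secondary technical nuisance is that $\varphi$ is not canonically defined (the nearest-point $w$ need not be unique) and need not be Borel a priori — but one checks that a Borel selection of nearest points exists on a proper metric space, or simply notes that any such selection works since all estimates are pointwise; this matches the paper's standing convention that mGH approximation maps are merely Borel. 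Once these points are settled, collecting the error terms gives a bound of the form $C(K,N,V)\,\vare$ as claimed.
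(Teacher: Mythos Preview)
Your proposal is correct and follows essentially the same approach as the paper's proof: the key observation that the projection correction $\sfd_Y(\varphi(z),\phi(z))$ is $O(\vare)$ on $B_r^X(x')$, the triangle-inequality distortion bound, the surjectivity-by-lifting argument, and the Bishop--Gromov annulus estimate for the measure are all exactly what the paper does (with explicit constants $2\vare$, $5\vare$, $7\vare$, and $\bar C(K,N,V)\vare$ respectively). Your additional remarks on Borel selection of nearest points and on the base-point condition are technical points the paper glosses over, so if anything your outline is slightly more careful than the original.
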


\begin{proof}
 Before calculating the distortion of $\varphi$ we see that for all $z  \in B^{X}_{r}(x')$  we have 
 \begin{equation}\label{eq-out}
\sfd_{Y}( \varphi(z), \phi(z)) \leq  2\vare.
\end{equation}
Indeed, for any  $z \in B^{X}_{r}(x')$, using that $\phi$ is an $\vare$-GH approximation and  the  definition of 
$x'$ in  \eqref{eq-xk+1}, we get
\begin{align*}
\sfd_{ Y} (\phi(z), y') \leq & \sfd_{Y} (\phi(z),  \phi(x')) + \sfd_{ Y} (\phi(x'), y')
 \leq    \sfd_{X} (z, x')  + 2 \vare <  r  +  2\vare. 
\end{align*}
Hence, if  $\phi(z) \notin B^{Y}_{r}(y')$ then  $\sfd_{Y}( \varphi(z), \phi(z)) \leq 2\vare$.  
The other case is trivial. 

\textbf{Step 1}. Control of the distortion of $\varphi$.
\\Let  $z,z' \in  B^{X}_{r}(x')$ such that $\varphi(z)=w$ and $\varphi(z')=w'$. 
Then by  \eqref{eq-out} and using that $\phi$ is a $\vare$-GH approximation, we get 
\begin{align*}
\sfd_{Y}(\varphi(z), \varphi(z'))  \leq   &  \sfd_{Y}(w,\phi(z))  + \sfd_{Y}(\phi(z), \phi(z'))  + \sfd_{Y}( \phi(z'), w') \\
  \leq   &  2\vare + \{ \sfd_{X}(z, z')  +  \vare\}  + 2\vare  \\
\leq & 5\vare +   \sfd_{X}(z, z').  
\end{align*}
In a similar way, we can get $   \sfd_{X}(z, z') \leq    5\vare +  \sfd_{Y}(\varphi(z), \varphi(z'))$.
So,  $dist(\varphi) \leq 5\vare$.

\textbf{Step 2}. Almost surjectivity of  $\varphi$.
\\Next, we show that for any  $w \in B^{Y}_{r}(y')$ there exists  $z'\in B^{X}_{r}(x')$ such that $\sfd_{Y} (w, \varphi(z') )  \leq 7\vare$.
\\Let  $w \in B^{Y}_{r}(y')$. 
Since  $B^{Y}_{r}(y') \subset B^{Y}_{R}(y)$ and $\phi$  is an $\vare$-GH approximation,
there exists  $z\in B^{X}_{R}(x)$ such that $\sfd_{Y} (w, \phi(z) )  \leq \vare$. 
If $z \in  B^{X}_{r}(x')$ we set $z'=z$.  By  \eqref{eq-out} we get
\begin{align*}
\sfd_{Y}(\varphi(z') , w) \leq   \sfd_{Y}(\varphi(z') , \phi(z')  ) +  \sfd_{Y}(\phi(z'), w) \leq 2\vare +  \vare= 3\vare.
 \end{align*}
If $z \notin  B^{X}_{r}(x')$,   let $z' \in \partial B^{X}_{r}(x')$ be a  closest point to $z$. 
Then, by   \eqref{eq-out} and using that  $\phi$ is a $\vare$-GH approximation, we get 
 \begin{align*}
\sfd_{Y}(\varphi(z') , w) \leq  &  \sfd_{Y}(\varphi(z') , \phi(z')  ) +   \sfd_{ Y}(\phi(z'), \phi(z)) +  \sfd_{Y}(\phi(z), w)  \\
 \leq  & 2\vare +  \{ \sfd_{X}(z', z) +\vare \}+  \vare.
 \end{align*}
We next estimate $\sfd_{X}(z', z)$. For this,  by  the  definition of $z'$ it is enough to estimate $\sfd_{X}(z, x')$. We have: 
\begin{align*}
\sfd_{X}( z, x')  \leq &    \sfd_{Y}( \phi(z),   \phi(x')) +  \vare \\
  \leq &  \{  \sfd_{Y}( \phi(z),   w)    +    \sfd_{Y}(  w, y' )     +   \sfd_{Y}( y' ,   \phi(x') ) \} +  \vare  \\
    \leq &  \{  \vare   +   r  +  \vare \} +  \vare  = 3 \vare +  r.
\end{align*}
Thus, $\sfd_{X}(z', z) \leq 3 \vare$ and $\sfd_{Y}(\varphi(z') , w) \leq  7\vare$. 

\textbf{Step 3}. Control of the measure  distortion.
\\Using that $\phi$ is an $\vare$-GH approximation and the definition \eqref{eq:defRestr} of $\varphi$, it is clear that  
\begin{equation}\label{eq:varphiequivphi}
\varphi\equiv \phi \text{ on $B_{r-2\vare} (x). $}
\end{equation}
From the Bishop-Gromov volume comparison, we have that there exists  $\bar C=\bar{C}(K,N,V)>0$ such that
\begin{equation}\label{eq:varphiequivphi2}
 \mm_{X}(B^{X}_{r}(x)\setminus B^{X}_{r-2\vare}(x))\leq \bar{C}(K,N,V)\,  \vare.
\end{equation}
The combination of \eqref{eq:varphiequivphi}, \eqref{eq:varphiequivphi2} with the fact that $\phi$ is a $\vare$-GH approximation gives (together with steps 1 and 2) that $\phi$ is a $C\vare$-GH approximation for some $C=C(K,N,V)>0$.
\end{proof}

\begin{remark} 
\label{rem-restr}
Observe that the previous argument also shows that if $\phi : B^{X}_{R}(x) \to  B^{Y}_{R}(y)$ is an $\vare$-GH approximation and $r < R$, the restriction $\varphi: B^X_r(x) \to B^Y_r(y)$ defined in \eqref{eq:defRestr}  is a $7\vare$-GH approximation. The dependence of $C$ on $K,N$ and $V$ comes  only in estimating the distortion of the measure.
\end{remark}



\begin{proposition}[Product with an Euclidean factor]\label{prop-prodmGH}
There exists a universal constant $C>0$ with the following properties.
Let $(Y, \sfd_{Y}, \mm_{Y})$ and $(Y',\sfd_{Y'}, \mm_{Y'})$ be metric measure spaces.
 Let   $$\phi:\bar{B}^Y_{r}(y) \rightarrow \bar{B}^{Y'}_{r}(y')$$  be an $\varepsilon$- mGH approximation with $\phi(y)=y'$ and $\vare\in (0,1)$.
 \\ Define
 $\varphi:  \bar{B}^{\R^k}_{r}(0^{k}) \times \bar{B}^{Y}_{r}(y)\to  \bar{B}^{\R^k}_{r}(0^{k}) \times \bar{B}^{Y'}_{r}(y') $ 
 by 
$$ \varphi(a,z)= (a, \phi(z)), \quad \text{ for all } (a,z)\in \bar{B}^{\R^k}_{r}(0^{k}) \times \bar{B}^{Y}_{r}(y).$$
Then   $\varphi$ is  a  $C\varepsilon$-mGH approximation.
\end{proposition}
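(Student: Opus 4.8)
The plan is to check the four conditions in the definition of a $C\varepsilon$-mGH approximation for $\varphi$ one at a time, reducing each to the corresponding property of $\phi$. I equip $\R^{k}\times Y$ and $\R^{k}\times Y'$ with the $\ell^{2}$-product metric, $\sfd\big((a,z),(a',z')\big)^{2}=|a-a'|^{2}+\sfd_{Y}(z,z')^{2}$ (the argument for the $\ell^{1}$- or $\ell^{\infty}$-product is identical), and I take $(0^{k},y)$ as reference point of the domain, so that $\varphi(0^{k},y)=(0^{k},\phi(y))=(0^{k},y')$ and the basepoint condition holds exactly.

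For the distortion, fix $(a,z),(a',z')$ in the domain and set $s:=\sfd_{Y}(z,z')$ and $t:=\sfd_{Y'}(\phi(z),\phi(z'))$, so $|s-t|\le\varepsilon$ because $\phi$ is an $\varepsilon$-mGH approximation. Since $u\mapsto\sqrt{|a-a'|^{2}+u^{2}}$ is $1$-Lipschitz on $[0,\infty)$, one gets $\big|\sfd(\varphi(a,z),\varphi(a',z'))-\sfd((a,z),(a',z'))\big|=\big|\sqrt{|a-a'|^{2}+t^{2}}-\sqrt{|a-a'|^{2}+s^{2}}\big|\le|s-t|\le\varepsilon$, so the distortion of $\varphi$ is at most $\varepsilon$. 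For almost surjectivity, given $(a,w)$ in the target with $|a|\le r$ and $w\in\bar B^{Y'}_{r}(y')$, the almost surjectivity of $\phi$ produces $z\in\bar B^{Y}_{r}(y)$ with $\sfd_{Y'}(\phi(z),w)\le 2\varepsilon$; then $(a,z)$ lies in the domain and $\sfd(\varphi(a,z),(a,w))=\sfd_{Y'}(\phi(z),w)\le2\varepsilon$, the Euclidean coordinates being equal. Hence the $2\varepsilon$-neighbourhood of $\varphi\big(\bar B^{\R^{k}}_{r}(0^{k})\times\bar B^{Y}_{r}(y)\big)$ covers the target.

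It remains to control the measure distortion, and this is the only step carrying genuine content. By the product structure $\varphi=\mathrm{id}_{\R^{k}}\times\phi$ one has
\[
\varphi_{\sharp}\Big(\big(\mathcal{L}^{k}\llcorner\bar B^{\R^{k}}_{r}(0^{k})\big)\otimes\big(\mm_{Y}\llcorner\bar B^{Y}_{r}(y)\big)\Big)=\big(\mathcal{L}^{k}\llcorner\bar B^{\R^{k}}_{r}(0^{k})\big)\otimes\phi_{\sharp}\big(\mm_{Y}\llcorner\bar B^{Y}_{r}(y)\big),
\]
and by hypothesis $\phi_{\sharp}(\mm_{Y}\llcorner\bar B^{Y}_{r}(y))$ is $\varepsilon$-close to $\mm_{Y'}\llcorner\bar B^{Y'}_{r}(y')$ in the distance metrizing weak convergence that enters the definition of an mGH approximation. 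The point is that tensoring both sides with the \emph{fixed} finite reference measure on the Euclidean ball preserves this closeness up to a universal factor: for a test function $F$ on the product that is $1$-bounded and $1$-Lipschitz, each slice $F(a,\cdot)$ is $1$-bounded and $1$-Lipschitz on the $Y$-factor, so the closeness estimate in the $Y$-variable integrates (Fubini) against the $a$-variable, and one uses that the normalised reference measure of $\R^{k}\times Y$ is comparable, with a universal constant, to the tensor product of the normalised reference measures of the two factors. Combining this with the three metric estimates above shows that $\varphi$ is a $C\varepsilon$-mGH approximation for a universal $C>0$; the comparability of the product reference measure with the product of the factor reference measures — the one place where the conventions in the definition of mGH approximation must be used carefully — is exactly what produces the universal constant $C$ in the statement.
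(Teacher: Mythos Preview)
Your proof is correct and follows the same overall structure as the paper's: verify the GH conditions for the product map, then handle the measure via Fubini. The one notable difference is the distortion estimate. The paper bounds the difference of \emph{squared} product distances by $|\sfd_{Y'}^{2}(\phi(z_{1}),\phi(z_{2}))-\sfd_{Y}^{2}(z_{1},z_{2})|\le\varepsilon^{2}+2\varepsilon\,\sfd_{Y}(z_{1},z_{2})$, then divides by the sum of the distances and splits into the cases $\sfd_{Y}\le\varepsilon$ and $\sfd_{Y}\ge\varepsilon$ to arrive at $3\varepsilon$. Your observation that $u\mapsto\sqrt{c+u^{2}}$ is $1$-Lipschitz on $[0,\infty)$ gives the bound $\varepsilon$ directly, with no case analysis --- a cleaner route. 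For the measure part, the paper tests against tensor products $\psi_{1}\otimes\psi_{2}$ (so Fubini separates the integral completely and the $\R^{k}$-factor cancels), whereas you slice a general bounded Lipschitz $F$; both arguments rest on the same Fubini reduction to the $Y$-factor, and your remarks on reference-measure normalisation, while not incorrect, go somewhat beyond what the paper's argument actually invokes.
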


\begin{proof}

\textbf{Step 1}. We first show that  $\varphi: \bar{B}^{\R^k}_{r}(0^{k}) \times \bar{B}^{Y}_{r}(y)\to  \bar{B}^{\R^k}_{r}(0^{k}) \times \bar{B}^{Y'}_{r}(y')$ is  a  $3\varepsilon$-GH approximation.
\\ To this aim, note that since $\phi$ is an $\varepsilon$-GH approximation
\begin{align}
|\sfd_{\R^{k}\times Y'}(\varphi(a_{1},z_{1}), \varphi(a_{2},z_{2}))^{2}-\sfd_{\R^{k}\times Y}((a_{1},z_{1}),(a_{2},z_{2}))^{2}| & =
 | \sfd_{Y'}^2 (\phi(z_{1}), \phi(z_{2})) -  \sfd_{Y}^2(z_{1}, z_{2})|    \label{eq:dprod} \\
&\leq   \varepsilon^{2}+2 \varepsilon\, \sfd_{Y}(z_{1},z_{2}).  \nonumber
\end{align}
In case $\sfd_{Y}(z_{1},z_{2})\leq \vare$,   by  \eqref{eq:dprod} and since $\phi$ is an $\varepsilon$-GH approximation we have
\begin{align}\label{eq:estdYleqeps}
|\sfd_{\R^{k}\times Y'}(\varphi(a_{1},z_{1}), \varphi(a_{2},z_{2}))-\sfd_{\R^{k}\times Y}((a_{1},z_{1}),(a_{2},z_{2}))| & =  
\frac{  |\sfd_{\R^{k}\times Y'}(\varphi(a_{1},z_{1}), \varphi(a_{2},z_{2}))^{2}-\sfd_{\R^{k}\times Y}((a_{1},z_{1}),(a_{2},z_{2}))^{2}|}
{ \sfd_{\R^{k}\times Y'}(\varphi(a_{1},z_{1}), \varphi(a_{2},z_{2})) +   \sfd_{\R^{k}\times Y}((a_{1},z_{1}),(a_{2},z_{2}))}  \nonumber \\
&  \leq  \frac{
 | \sfd_{Y'}^2 (\phi(z_{1}), \phi(z_{2})) -  \sfd_{Y}^2(z_{1}, z_{2})| }{ 
  \sfd_{Y'} (\phi(z_{1}), \phi(z_{2})) +  \sfd_{Y}(z_{1}, z_{2}) }  
  \nonumber \\
  &   \leq | \sfd_{Y'}(\phi(z_{1}), \phi(z_{2})) -  \sfd_{Y}(z_{1}, z_{2})| \leq \vare.
\end{align}
If instead $\sfd_{Y}(z_{1},z_{2})\geq \varepsilon$,  proceeding as in \eqref{eq:estdYleqeps}  we obtain
\begin{align}\label{eq:estdYgeqeps}
|\sfd_{\R^{k}\times Y'}(\varphi(a_{1},z_{1}), \varphi(a_{2},z_{2}))-\sfd_{\R^{k}\times Y}((a_{1},z_{1}),(a_{2},z_{2}))| & \leq 
\frac{   \varepsilon^{2}+2 \varepsilon\, \sfd_{Y}(z_{1},z_{2}) }
{ \sfd_{\R^{k}\times Y'}(\varphi(a_{1},z_{1}), \varphi(a_{2},z_{2}))+\sfd_{\R^{k}\times Y}((a_{1},z_{1}),(a_{2},z_{2})) } \nonumber \\
& \leq  \frac{   \varepsilon^{2}+2 \varepsilon\, \sfd_{Y}(z_{1},z_{2}) } 
{  \sfd_{Y}(z_{1},z_{2}) } \nonumber \\
&   \leq 3 \varepsilon.
\end{align}
Combining \eqref{eq:estdYleqeps} with \eqref{eq:estdYgeqeps}, we obtain the claim. 

\textbf{Step 2}. Control of the measure distortion. 
\\In order to obtain the closeness of the measures $\varphi_{\sharp}\left({\mathcal L}^{k}\otimes \mm_{Y} \llcorner \bar{B}^{\R^k}_{r}(0^{k}) \times \bar{B}^{Y}_{r}(y)\right)$ and ${\mathcal L}^{k}\otimes \mm_{Y'}\llcorner  \bar{B}^{\R^k}_{r}(0^{k}) \times \bar{B}^{Y'}_{r}(y')$, it is enough to notice that for each $\psi_{1}\in C(\R^k), \, \psi_{2}\in C(Y')$ with $\int_{\bar{B}^{\R^k}_{r}(0^{k})} \psi_{1} \,d{\mathcal L}^{k} =1$ it holds
\begin{align*}
&\left|\int   \psi_{1} \otimes \psi_{2} \ d \varphi_{\sharp}\left({\mathcal L}^{k}\otimes \mm_{Y} \llcorner \bar{B}^{\R^k}_{r}(0^{k}) \times \bar{B}^{Y}_{r}(y)\right)- \int   \psi_{1} \otimes \psi_{2}\, d \left({\mathcal L}^{k}\otimes \mm_{Y'}\llcorner  \bar{B}^{\R^k}_{r}(0^{k}) \times \bar{B}^{Y'}_{r}(y')\right)  \right| \nonumber \\
&\quad = \left|\int  \psi_{2} \ d \varphi_{\sharp}\left(m_{Y} \llcorner \times \bar{B}^{Y}_{r}(y)\right)- \int \psi_{2}\, d \left(\mm_{Y'}\llcorner  \bar{B}^{Y'}_{r}(y')\right)  \right|,
\end{align*}
where we used Fubini-Tonelli's Theorem. 
\end{proof}


\bibliographystyle{amsalpha}
\bibliography{ColdingRCD.bib}
\end{document}